\long\def\symbolfootnote[#1]#2{\begingroup%
\def\thefootnote{\fnsymbol{footnote}}\footnote[#1]{#2}\endgroup}
\qed\vspace{5pt}}
\newtheoremstyle{lause}
{5pt}
{5pt}
{\slshape}
{\parindent}
{\bfseries}
{.}
{.5em}
{}
\theoremstyle{lause}
\newtheoremstyle{maaritelma}
{5pt}
{5pt}
{\rmfamily}
{\parindent}
{\bfseries}
{.}
{.5em}
{}
\theoremstyle{maaritelma}
\newtheoremstyle{lause}
{5pt}
{5pt}
{\slshape}
{\parindent}
{\bfseries}
{.}
{.5em}
{}
\theoremstyle{lause}
\newtheorem{theorem}{Theorem}[section]
\newtheorem{lemma}[theorem]{Lemma}
\newtheorem{proposition}[theorem]{Proposition}
\newtheoremstyle{maaritelma}
{5pt}
{5pt}
{\rmfamily}
{\parindent}
{\bfseries}
{.}
{.5em}
{}
\theoremstyle{maaritelma}
\newtheorem{definition}[theorem]{Definition}
\newtheorem{remark}[theorem]{Remark}
\DeclareMathOperator{\cp}{cap}
\numberwithin{equation}{section}
\begin{document}

\thispagestyle{empty}

\begin{center}

{\large{\textbf{Balayage of Radon measures of infinite energy\\ on locally compact spaces}}}

\vspace{18pt}

\textbf{Natalia Zorii}

\vspace{18pt}

\emph{Dedicated to Professor Wolfgang Dahmen on the occasion of his 75th birthday}\vspace{8pt}

\footnotesize{\address{Institute of Mathematics, Academy of Sciences
of Ukraine, Tereshchenkivska~3, 01601,
Kyiv-4, Ukraine\\
natalia.zorii@gmail.com }}

\end{center}

\vspace{12pt}

{\footnotesize{\textbf{Abstract.} For suitable kernels on a locally compact space $X$, we develop a theory of inner balayage of quite general Radon measures $\omega$ (not necessarily of finite energy) to arbitrary $A\subset X$. In the case where $A$ is Borel, this theory provides, as a by-product, a theory of outer balayage. We prove the existence and the uniqueness of inner (outer) swept measures, analyze their properties, and provide a number of alternative characterizations. In spite of being in agreement with Cartan's theory of Newtonian balayage, the results obtained require essentially new methods and approaches, since in the case in question, useful specific features of Newtonian potentials may fail to hold. The theory thereby established generalizes substantially the existing ones, pertaining either to $\omega$ of finite energy, or to some particular $A$ (e.g.\ quasiclosed). This work covers many interesting kernels in classical and modern potential theory, which looks promising for possible applications.}}
\symbolfootnote[0]{\quad 2010 Mathematics Subject Classification: Primary 31C15.}
\symbolfootnote[0]{\quad Key words: Radon measures on a locally compact space; function kernels; potentials; energy, consistency, and maximum principles; inner and outer balayage.
}

\vspace{6pt}

\markboth{\emph{Natalia Zorii}} {\emph{Balayage of Radon measures of infinite energy on locally compact spaces}}

\section{Introduction and general conventions}\label{sec-intr}

The present paper deals with the theory of potentials on a locally compact (Hausdorff) space $X$ with respect to a kernel $\kappa$, {\it a kernel\/} being thought of as a symmetric, lower semi\-con\-tin\-uous (l.s.c.) function $\kappa:X\times X\to[0,\infty]$.

In more details, under suitable requirements on a kernel $\kappa$, we develop a theory of {\it inner} balayage $\omega^A$ of quite a general Radon measure $\omega$ (not necessarily of finite energy) to {\it arbitrary} $A\subset X$ (Theorems~\ref{th1}, \ref{th2}, \ref{pr-in-ou}, \ref{l-alt-countt}). In the case where $A$ is Borel, the theory thereby established provides, as a by-pro\-d\-uct, a theory of {\it outer} balayage $\omega^{*A}$ (Theorems~\ref{th1'}, \ref{pr-in-ou}, \ref{l-alt-countt}). Regarding the theory of outer balayage, cf.\ also Theorem~4.12 by Fuglede \cite{Fu5}, pertaining to {\it quasiclosed} sets, that is, to those $A\subset X$ which can be approximated in outer capacity by closed sets \cite{F71}. (It is worth emphasizing here that a quasiclosed set is not necessarily Borel, and vice versa.)

As to the history of the question, the theory of inner balayage of any $\omega$ of {\it finite} energy to any $A\subset X$ was developed in the author's recent papers \cite{Z-arx1}--\cite{Z-arx}, and this was performed for any $\kappa$ satisfying the energy, consistency, and domination principles.

Such a theory was further extended in \cite{Z-24} to quite general measures $\omega$ whose energy might already be {\it infinite}. To this end, we needed, however, to impose upon $X$, $\kappa$, and $A$ some additional assumptions. In particular, all positive measures of finite energy, concentrated on the set $A$, were required in \cite{Z-24} to form a strongly closed cone. (This would occur, for instance, if $A$ were quasiclosed, see \cite[Theorem~2.13]{Z-Oh}.)

In this work, we generalize the theory established in \cite{Z-24} to {\it arbitrary} $A\subset X$. Moreover, some of the results thereby obtained are new even in the case treated in \cite{Z-24}. In particular, we show that if the space $X$ has a countable base of open sets, then the inner balayage to {\it arbitrary} $A$ can always be reduced to the balayage to some {\it Borel} $B\subset A$, and hence $\omega^A=\omega^B=\omega^{*B}$ (see Theorem~\ref{pr-in-ou}; compare with Proposition~VI.2.2 by Bliedtner and Hansen \cite{BH}, related to outer balayage). See also Theorem~\ref{l-alt-countt}, specifying the theory of inner (outer) balayage to the case where any $\varphi\in C_0(X)$, a continuous function on $X$ of compact support, can be approximated in the inductive limit topology on $C_0(X)$ by potentials of measures of finite energy.

\subsection{General conventions}\label{sec-intr1} To begin with, we shall first introduce some notions and notations, to be used throughout this paper.

Given a locally compact space $X$, we denote by $\mathfrak M$ the linear space of all (real-valued Radon) measures $\mu$ on $X$ equipped with the {\it vague} topology of pointwise convergence on the class $C_0(X)$ of all continuous functions $\varphi:X\to\mathbb R$ of compact support ${\rm Supp}(\varphi)$, and by $\mathfrak M^+$ the cone of all positive $\mu\in\mathfrak M$, where $\mu$ is {\it positive} if and only if $\mu(\varphi)\geqslant0$ for all positive $\varphi\in C_0(X)$. (For the theory of measures and integration on a locally compact space, we refer to Bourbaki \cite{B2} or Edwards \cite{E2}.)

Given $\mu,\nu\in\mathfrak M$, the {\it mutual energy} and the {\it potential} are introduced by
\begin{align*}
  I(\mu,\nu)&:=\int\kappa(x,y)\,d(\mu\otimes\nu)(x,y),\\
  U^\mu(x)&:=\int\kappa(x,y)\,d\mu(y),\quad x\in X,
\end{align*}
respectively, provided the value on the right is well defined as a finite number or $\pm\infty$. For $\mu=\nu$, the mutual energy $I(\mu,\nu)$ defines the {\it energy} $I(\mu,\mu)=:I(\mu)$ of $\mu\in\mathfrak M$.

In what follows, a kernel $\kappa$ is assumed to satisfy the {\it energy principle}, or equivalently to be {\it strictly positive definite}, which means that $I(\mu)\geqslant0$ for all (signed) $\mu\in\mathfrak M$, and moreover that $I(\mu)$ equals $0$ only for the zero measure. Then all $\mu\in\mathfrak M$ of finite energy $0\leqslant I(\mu)<\infty$ form a pre-Hil\-bert space $\mathcal E$ with the inner product $\langle\mu,\nu\rangle:=I(\mu,\nu)$ and the energy norm $\|\mu\|:=\sqrt{I(\mu)}$, cf.\ \cite[Lemma~3.1.2]{F1}. The topology on $\mathcal E$ determined by means of this norm, is said to be {\it strong}.

In addition, we always assume that $\kappa$ satisfies the {\it consistency} principle, which means that the cone
$\mathcal E^+:=\mathcal E\cap\mathfrak M^+$ is {\it complete} in the induced strong topology, and that the strong topology on $\mathcal E^+$ is {\it finer} than the induced vague topology on $\mathcal E^+$; such a kernel is said to be {\it perfect} (Fuglede \cite{F1}). Thus any strong Cauchy sequence (net) $(\mu_j)\subset\mathcal E^+$ converges {\it both strongly and vaguely} to the same unique measure $\mu_0\in\mathcal E^+$, the strong topology on $\mathcal E$ as well as the vague topology on $\mathfrak M$ being Hausdorff.

Yet another permanent requirement upon $\kappa$ is that it satisfies  the {\it domination} and {\it Ugaheri maximum principles}, where the former means that for any $\mu\in\mathcal E^+$ and any $\nu\in\mathfrak M^+$ with $U^\mu\leqslant U^\nu$ $\mu$-a.e., the same inequality holds on all of $X$; whereas the latter means that there is $h\in[1,\infty)$, depending on $X$ and $\kappa$ only, such that for each $\mu\in\mathcal E^+$ with $U^\mu\leqslant c_\mu$ $\mu$-a.e., where $c_\mu\in(0,\infty)$, we have $U^\mu\leqslant hc_\mu$ on all of $X$. When $h$ is specified, we speak of {\it $h$-Ugaheri's maximum principle}, and when $h=1$, $h$-Ugaheri's maximum principle is referred to as {\it Frostman's maximum principle} \cite{O}.

For any $A\subset X$, we denote by $\mathfrak C_A$ the upward directed set of all compact subsets $K$ of $A$, where $K_1\leqslant K_2$ if and only if $K_1\subset K_2$. If a net $(x_K)_{K\in\mathfrak C_A}\subset Y$ converges to $x_0\in Y$, $Y$ being a topological space, then we shall indicate this fact by writing
\begin{equation*}x_K\to x_0\text{ \ in $Y$ as $K\uparrow A$}.\end{equation*}

Given $A\subset X$, we denote by $\mathfrak M^+(A)$ the cone of all $\mu\in\mathfrak M^+$ {\it concentrated on}
$A$, which means that $A^c:=X\setminus A$ is locally $\mu$-neg\-lig\-ible, or equivalently that $A$ is $\mu$-meas\-ur\-able and $\mu=\mu|_A$, $\mu|_A:=1_A\cdot\mu$ being the {\it trace} of $\mu$ to $A$ \cite[Section~IV.14.7]{E2}. (Note that for $\mu\in\mathfrak M^+(A)$, the indicator function $1_A$ of $A$ is locally $\mu$-int\-egr\-able.) The total mass of $\mu\in\mathfrak M^+(A)$ is $\mu(X)=\mu_*(A)$, $\mu_*(A)$ and $\mu^*(A)$ denoting the {\it inner} and the {\it outer} measure of $A$, respectively. If moreover $A$ is closed, or if $A^c$ is contained in a countable union of sets $Q_j$ with $\mu^*(Q_j)<\infty$,\footnote{If the latter holds, $A^c$ is said to be $\mu$-$\sigma$-{\it finite} \cite[Section~IV.7.3]{E2}. This in particular occurs if the measure $\mu$ is {\it bounded} (that is, with $\mu(X)<\infty$), or if the locally compact space $X$ is {\it $\sigma$-com\-pact} (that is, representable as a countable union of compact sets \cite[Section~I.9, Definition~5]{B1}).} then for any $\mu\in\mathfrak M^+(A)$, $A^c$ is $\mu$-neg\-lig\-ible, that is, $\mu^*(A^c)=0$. In particular, if $A$ is closed, $\mathfrak M^+(A)$ consists of all $\mu\in\mathfrak M^+$ with support ${\rm Supp}(\mu)\subset A$, cf.\ \cite[Section~III.2.2]{B2}.

We also define $\mathcal E^+(A):=\mathcal E\cap\mathfrak M^+(A)$. As seen from \cite[Lemma~2.3.1]{F1},\footnote{For the {\it inner} and {\it outer} capacities of $A\subset X$, denoted by $\cp_*A$ and $\cp^*A$, respectively, we refer to \cite[Section~2.3]{F1}. If $A$ is capacitable (e.g.\ open or compact), we write $\cp A:=\cp_*A=\cp_*A$.}
\begin{equation}\label{iff}
 \cp_*A=0\iff\mathcal E^+(A)=\{0\}\iff\mathcal E^+(K)=\{0\}\quad\text{for all $K\in\mathfrak C_A$}.
\end{equation}

In what follows, fix {\it arbitrary} $A\subset X$. To avoid trivialities, suppose that
\begin{equation*}
 \cp_*A>0.
\end{equation*}
When approximating $A$ by $K\uparrow A$, we may therefore only consider $K$ with $\cp K>0$.

Also fix $\omega\in\mathfrak M^+$, $\omega\ne0$. Referring to \cite{Z-arx1}--\cite{Z-arx} for the theory of balayage of any $\omega\in\mathcal E^+$ to any $A\subset X$, established for any perfect kernel satisfying the domination principle, in the present study we shall allow $I(\omega)$ to be $+\infty$.

Then, along with the above-mentioned permanent requirements on $\kappa$, i.e.
\begin{itemize}
\item[(a)] {\it $\kappa$ is perfect, and satisfies the domination and $h$-Ugaheri maximum principles,}
\end{itemize}
we shall also assume that (b) and (c) are fulfilled, where:
\begin{itemize}
  \item[(b)] $\omega$ {\it is bounded.}
  \item[(c)] $U^\omega$ {\it is continuous on every compact subset of $A$, and it is bounded on $A$, i.e.}
  \begin{equation}\label{supA}
   \sup_{x\in A}\,U^\omega(x)<\infty.
  \end{equation}
\end{itemize}
The permanent assumptions (a)--(c) will usually not be repeated henceforth.

\begin{remark}\label{(a)}
Assumption (a) is fulfilled, for instance, for the following kernels:
\begin{itemize}
  \item[$\checkmark$] The $\alpha$-Riesz kernels $|x-y|^{\alpha-n}$ of order $\alpha\in(0,2]$, $\alpha<n$, on $\mathbb R^n$, $n\geqslant2$ (see  \cite[Theorems~1.10, 1.15, 1.18, 1.27, 1.29]{L}).
  \item[$\checkmark$] The associated $\alpha$-Green kernels, where $\alpha\in(0,2]$ and $\alpha<n$, on an arbitrary open subset of $\mathbb R^n$, $n\geqslant2$ (see \cite[Theorems~4.6, 4.9, 4.11]{FZ}).
   \item[$\checkmark$] The ($2$-)Green kernel, associated with the Laplacian, on a planar Greenian set (see \cite[Theorem~5.1.11]{AG}, \cite[Sections~I.V.10, I.XIII.7]{Doob}, and \cite{E}).
\end{itemize}
For all those kernels, $h=1$, that is, Frostman's maximum principle actually holds.
\end{remark}

\section{Inner and outer balayage}\label{sec-main}

\subsection{Inner balayage}\label{sec-main-inner}
Given an arbitrary set $A\subset X$, let $\mathcal E'(A)$ stand for the closure of $\mathcal E^+(A)$ in the strong topology on $\mathcal E^+$.\footnote{If $A=:F$ is closed, then, due to the perfectness of the kernel $\kappa$ and the vague closedness of the class $\mathfrak M^+(F)$ \cite[Section~III.2, Proposition~6]{B2}, we have $\mathcal E'(F)=\mathcal E^+(F)$. More generally, this equality remains valid if $F$ is quasiclosed, see \cite[Theorem~2.13]{Z-Oh}.} Being a strongly closed subcone of the strongly complete cone $\mathcal E^+$ (Section~\ref{sec-intr1}), the cone $\mathcal E'(A)$ is likewise strongly complete.

Then for any given $\lambda\in\mathcal E^+$, there exists the only measure $\lambda^A\in\mathcal E'(A)$ such that\footnote{An assertion $\mathcal A(x)$ involving a variable point $x\in X$ is said to hold {\it nearly everywhere} ({\it n.e.}) on a set $Q\subset X$ if the set $N$ of all $x\in Q$ where $\mathcal A(x)$ fails, is of inner capacity zero. Replacing here $\cp_*N=0$ by $\cp^*N=0$, we obtain the concept of {\it quasi-everywhere} ({\it q.e.}) on $Q$. See \cite[p.~153]{F1}.}
\begin{equation}\label{n.e.}
U^{\lambda^A}=U^\lambda\quad\text{n.e.\ on $A$};
\end{equation}
this $\lambda^A$ is called the {\it inner balayage} of $\lambda$ to $A$, see \cite[Theorem~4.3]{Z-arx1}.
See also \cite{Z-arx1}--\cite{Z-arx} for a number of alternative characterizations of the inner balayage $\lambda^A$, some of which will be quoted in the course of proofs given below.

At the moment, we only note that $\lambda^A$ coincides with the {\it orthogonal projection} of $\lambda\in\mathcal E^+$ in the pre-Hil\-bert space $\mathcal E$ onto the (convex, strongly complete) cone $\mathcal E'(A)$, see \cite[Theorem~4.3]{Z-arx1}. (With regard to the orthogonal projection in a pre-Hil\-bert space, see \cite[Theorem~1.12.3, Proposition~1.12.4]{E2}.) 
In particular, this implies that
\begin{equation}\label{L}
\lambda^A=\lambda\quad\text{for any $\lambda\in\mathcal E'(A)$}.
\end{equation}

Returning to $\omega\in\mathfrak M^+$ whose energy might be $+\infty$, we shall now introduce the concept of inner balayage as follows.

\begin{definition}\label{def-in} The {\it inner balayage} of $\omega\in\mathfrak M^+$ to any $A\subset X$ is understood as $\omega^A\in\mathcal E'(A)$ meeting the symmetry relation
\begin{equation}\label{sym}I(\omega^A,\lambda)=I(\lambda^A,\omega)\quad\text{for all $\lambda\in\mathcal E^+$},\end{equation}
where $\lambda^A$ denotes the only measure in $\mathcal E'(A)$ satisfying {\rm(\ref{n.e.})}.
\end{definition}

\begin{remark}\label{rem-agree}
The concept of inner balayage thus defined agrees with that by Cartan \cite[Section~18]{Ca2} or that by the author \cite[Definition~3.9]{Z-bal}, pertaining to the Newtonian or Riesz kernels, respectively.
\end{remark}

\begin{lemma}\label{uniq-in}
The inner balayage $\omega^A$ is unique {\rm(}if it exists{\rm)}.
\end{lemma}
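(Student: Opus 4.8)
The plan is to lean entirely on the strict positive definiteness of $\kappa$ (the energy principle), together with the fact that the defining relation (\ref{sym}) must hold for \emph{every} test measure $\lambda\in\mathcal E^+$. The key point I would exploit is that a candidate swept measure lies in $\mathcal E'(A)\subset\mathcal E^+$, so the candidates are themselves admissible as test measures in (\ref{sym}). This self-testing is exactly what upgrades a linear identity into a quadratic one.

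Concretely, suppose $\sigma_1,\sigma_2\in\mathcal E'(A)$ both satisfy (\ref{sym}). For each fixed $\lambda\in\mathcal E^+$ the right-hand side $I(\lambda^A,\omega)$ is the same in both instances, so subtracting gives $I(\sigma_1,\lambda)=I(\sigma_2,\lambda)$, whence
\[
I(\sigma_1-\sigma_2,\lambda)=0\quad\text{for all }\lambda\in\mathcal E^+ .
\]
Here $\sigma_1-\sigma_2\in\mathcal E$, so all mutual energies involved are finite and the bilinearity of $I$ on $\mathcal E$ is legitimate.

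Since $\mathcal E'(A)\subset\mathcal E^+$, I may now substitute $\lambda=\sigma_1$ and $\lambda=\sigma_2$ in the displayed identity; subtracting the two resulting equalities yields $I(\sigma_1-\sigma_2)=I(\sigma_1-\sigma_2,\sigma_1-\sigma_2)=0$. By the energy principle, the only measure of finite energy with vanishing energy is the zero measure, so $\sigma_1-\sigma_2=0$, i.e.\ $\sigma_1=\sigma_2$, which proves uniqueness.

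There is no real obstacle to flag here: the entire delicacy of inner balayage lies in \emph{existence}, to be established separately, whereas uniqueness is a formal consequence of the pre-Hilbert structure of $\mathcal E$. The only step worth naming is the passage from the linear identity $I(\sigma_1-\sigma_2,\lambda)=0$ to the quadratic identity $I(\sigma_1-\sigma_2)=0$, which works precisely because the admissible test class $\mathcal E^+$ already contains the candidate measures themselves.
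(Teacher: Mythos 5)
Your proof is correct and follows essentially the same route as the paper: subtract the two instances of the symmetry relation to get $I(\sigma_1-\sigma_2,\lambda)=0$ for all $\lambda\in\mathcal E^+$, then invoke strict positive definiteness. The only cosmetic difference is that the paper extends the identity by linearity to all signed $\lambda\in\mathcal E$ and substitutes $\lambda:=\omega^A-\nu$ directly, whereas you test with the two positive candidates separately and subtract — the same quadratic conclusion either way.
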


\begin{proof}
Assume (\ref{sym}) is also fulfilled for some $\nu\in\mathcal E'(A)$ in place of $\omega^A$, i.e.
\begin{equation}\label{sym'}I(\nu,\lambda)=I(\lambda^A,\omega)\quad\text{for all $\lambda\in\mathcal E^+$}.\end{equation}
Note that the left-hand side in either of (\ref{sym}) or (\ref{sym'}) is finite, hence so is that on the right.
Subtracting (\ref{sym'}) from (\ref{sym}) therefore gives $I(\omega^A-\nu,\lambda)=0$ for all $\lambda\in\mathcal E^+$,
and consequently for all $\lambda\in\mathcal E$. Taken for $\lambda:=\omega^A-\nu$, this yields $I(\omega^A-\nu)=0$, whence $\omega^A=\nu$, the kernel $\kappa$ being strictly positive definite.
\end{proof}

\begin{remark}It will be shown in Theorem~\ref{th1} below that under the permanent requirements (a)--(c), $\omega^A$ does exist, and it can alternatively be defined by any one of a number of equivalent characteristic properties (cf.\ also Theorem~\ref{th2}). If moreover the space $X$ is second-countable, while the set of all $\varphi\in C_0(X)$ representable as potentials of signed measures of finite energy is dense in $C_0(X)$ equipped with the inductive limit topology,\footnote{Regarding the inductive limit topology on the space $C_0(X)$, see Bourbaki \cite[Section~II.4.4]{B4} and \cite[Section~III.1.1]{B2} (cf.\ also Section~\ref{sec-prel} below).} then the symmetry relation (\ref{sym}) needs only to be verified for certain countably many $\lambda\in\mathcal E^+$, depending on $X$ and $\kappa$ only (Theorem~\ref{l-alt-countt}).\end{remark}

\subsection{On the existence and alternative characterizations of $\omega^A$}\label{sec-main1} To formulate the main results of this paper, we start with some notations. Setting
\begin{equation}\label{H}
H:=H_{\kappa,\omega}:=h\omega(X)\in(0,\infty),
\end{equation}
$h$ appearing in $h$-Ugaheri's maximum principle (see (a) and (b)), we denote\footnote{The class $\mathcal E^+_H(A)$ would certainly be the same if $\mu(X)$ in (\ref{eH}) were replaced by $\mu_*(A)$, since for any $\mu\in\mathfrak M^+(A)$, $\mu(X)=\mu|_A(X)=\mu_*(A)$.}
\begin{equation}\label{eH}
\mathcal E^+_H(A):=\bigl\{\mu\in\mathcal E^+(A): \ \mu(X)\leqslant H\bigr\},
\end{equation}
and let $\mathcal E'_H(A)$ stand for the strong closure of $\mathcal E^+_H(A)$.
Being a strongly closed subset of the strongly complete cone $\mathcal E'(A)$, $\mathcal E'_H(A)$ is likewise {\it strongly complete}.

Furthermore, in view of the perfectness of $\kappa$, each $\zeta\in\mathcal E'_H(A)$ is both the strong and the vague limit of some net (sequence) $(\mu_j)\subset\mathcal E^+_H(A)$, whence
\begin{equation}\label{supm}
\mathcal E'_H(A)\subset\mathcal E^+_H(\overline{A}),\quad\text{where $\overline{A}:={\rm Cl}_XA$},
\end{equation}
the class $\mathfrak M^+(\overline{A})$ being vaguely closed \cite[Section~III.2, Proposition~6]{B2}, whereas the mapping $\nu\mapsto\nu(X)$ being vaguely l.s.c.\ on $\mathfrak M^+$ \cite[Section~IV.1, Proposition~4]{B2}.

It is often convenient to treat the above $\omega$ as a charge creating the {\it external field}
\[f:=-U^\omega.\]
As seen from (\ref{supA}), this $f$ is bounded on $\overline{A}$; therefore, the so-called {\it Gauss functional\/}\footnote{For the terminology used here, see \cite{L,O}. In constructive function theory, $I_f(\mu)$ is sometimes
referred to as {\it the $f$-weighted energy}, see e.g.\ \cite{BHS,Dr0,ST}.}
\begin{equation}\label{Gf}
I_f(\mu):=\|\mu\|^2+2\int f\,d\mu=\|\mu\|^2-2\int U^\omega\,d\mu
\end{equation}
is {\it finite} for all bounded $\mu\in\mathcal E^+(\overline{A})$, and hence for all $\mu\in\mathcal E'_H(A)$ (cf.\ (\ref{H})--(\ref{supm})).

\begin{theorem}\label{th1}
Under the permanent assumptions {\rm(a)--(c)}, the inner balayage $\omega^A$, introduced by means of Definition~{\rm\ref{def-in}}, does exist, is unique, and it can alternatively be characterized by any one of the following equivalent assertions {\rm(i)--(iv)}.
\begin{itemize}
\item[{\rm(i)}] $\omega^A$ is the only measure in $\mathcal E'_H(A)$ having the property
\begin{equation}\label{n.e.'}
U^{\omega^A}=U^\omega\quad\text{n.e.\ on $A$}.
\end{equation}
\item[{\rm(ii)}] $\omega^A$ is uniquely determined within $\mathcal E'_H(A)$ by any one of the limit relations\footnote{Assertion (ii) justifies the term "{\it inner} balayage".}
\begin{align}
  \omega^K&\to\omega^A\quad\text{strongly in $\mathcal E^+$ as $K\uparrow A$},\label{c1}\\
  \omega^K&\to\omega^A\quad\text{vaguely in $\mathfrak M^+$ as $K\uparrow A$},\label{c2}\\
  U^{\omega^K}&\uparrow U^{\omega^A}\quad\text{pointwise on $X$ as $K\uparrow A$},\label{c3}
\end{align}
where $\omega^K$ denotes the only measure in $\mathcal E^+(K)$ having the property
\begin{equation}\label{balK}U^{\omega^K}=U^\omega\quad\text{n.e.\ on $K$},\end{equation}
or equivalently
\begin{equation}\label{GaussK}I_f(\omega^K)=\min_{\mu\in\mathcal E^+(K)}\,I_f(\mu)=\min_{\mu\in\mathcal E^+_H(K)}\,I_f(\mu),\end{equation}
the Gauss functional $I_f(\cdot)$ being introduced by means of {\rm(\ref{Gf})}.
\item[{\rm(iii)}] $\omega^A$ is the only measure in the class $\Gamma_{A,\omega}$ having the property\footnote{Relations (\ref{minpot}) and (\ref{minen}) would obviously be the same if the class $\Gamma_{A,\omega}$ were replaced by either of $\Gamma_{A,\omega}\cap\mathcal E'_H(A)$ or $\Gamma_{A,\omega}\cap\mathcal E^+_H(\overline{A})$.}
\begin{equation}\label{minpot}
U^{\omega^A}=\min_{\nu\in\Gamma_{A,\omega}}\,U^\nu\quad\text{on $X$},
\end{equation}
where
\begin{equation}\label{G}
\Gamma_{A,\omega}:=\bigl\{\nu\in\mathcal E^+:\ U^\nu\geqslant U^\omega\quad\text{n.e.\ on $A$}\bigr\}.
\end{equation}
\item[{\rm(iv)}] $\omega^A$ is the only measure in the class $\Gamma_{A,\omega}$ having the property
\begin{equation}\label{minen}
\|\omega^A\|=\min_{\nu\in\Gamma_{A,\omega}}\,\|\nu\|,
\end{equation}
$\Gamma_{A,\omega}$ being introduced by means of {\rm(\ref{G})}.
\end{itemize}
\end{theorem}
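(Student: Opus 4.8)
The plan is to construct $\omega^A$ as the strong limit of the balayages $\omega^K$ onto the compact sets $K\in\mathfrak C_A$, and then to read off each of (i)--(iv) from the properties of this net. First I would treat a single compact $K$ with $\cp K>0$. Since $U^\omega$ is continuous on $K$ and $\le M:=\sup_AU^\omega<\infty$ by (\ref{supA}), the Gauss functional $I_f$ of (\ref{Gf}) is bounded below, strictly convex, and strongly lower semicontinuous on the strongly complete cone $\mathcal E^+(K)=\mathcal E'(K)$, and it is coercive there because $\cp K<\infty$. Hence there is a unique minimizer $\omega^K$, and the standard variational inequalities give $U^{\omega^K}\ge U^\omega$ n.e.\ on $K$ together with $\int(U^{\omega^K}-U^\omega)\,d\omega^K=0$; the domination principle (applied to $\omega^K\in\mathcal E^+$, $\omega\in\mathfrak M^+$) upgrades the latter to $U^{\omega^K}\le U^\omega$ on all of $X$, whence (\ref{balK}), its equivalence with (\ref{GaussK}), and $\|\omega^K\|^2=\int U^\omega\,d\omega^K$. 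Using the per-point bound $\varepsilon_x^K(X)\le h$ (a consequence of $h$-Ugaheri's principle) and the representation $\omega^K=\int\varepsilon_x^K\,d\omega(x)$, I obtain the uniform mass bound $\omega^K(X)\le h\omega(X)=H$, hence $\omega^K\in\mathcal E^+_H(K)$ and the uniform energy bound $\|\omega^K\|^2=\int U^\omega\,d\omega^K\le MH$.

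Next I would show the net converges. Transitivity $(\omega^{K'})^{K}=\omega^{K}$ for $K\subset K'$ (both sides lie in $\mathcal E^+(K)$ and carry the potential $U^\omega$ n.e.\ on $K$) yields monotone potentials $U^{\omega^{K}}\le U^{\omega^{K'}}$ and the Pythagorean identity $\|\omega^{K'}-\omega^{K}\|^2=\|\omega^{K'}\|^2-\|\omega^{K}\|^2$. Thus $(\|\omega^K\|^2)_{K\in\mathfrak C_A}$ is increasing and bounded by $MH$, hence a Cauchy net of reals, so $(\omega^K)$ is strongly Cauchy; by the strong completeness of $\mathcal E'_H(A)$ and the perfectness of $\kappa$ it has a common strong and vague limit $\omega^A\in\mathcal E'_H(A)$. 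This establishes (\ref{c1})--(\ref{c2}), while (\ref{c3}) follows from the monotonicity together with the principle of descent; uniqueness within $\mathcal E'_H(A)$ in (ii) is automatic since the strong and vague limits are unique, giving assertion (ii).

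To prove existence, I fix $\lambda\in\mathcal E^+$ and compute, using that $U^{\lambda^K}=U^\lambda$ n.e.\ on $K$ and $U^{\omega^K}=U^\omega$ n.e.\ on $K$ against the finite-energy measures $\omega^K,\lambda^K$ (which ignore sets of inner capacity zero), the chain $I(\omega^K,\lambda)=I(\omega^K,\lambda^K)=\int U^\omega\,d\lambda^K=\int U^{\lambda^K}\,d\omega$. Letting $K\uparrow A$, the left side tends to $I(\omega^A,\lambda)$ by the strong convergence $\omega^K\to\omega^A$, while $\int U^{\lambda^K}\,d\omega\uparrow\int U^{\lambda^A}\,d\omega=I(\lambda^A,\omega)$ since $U^{\lambda^K}\uparrow U^{\lambda^A}$ for the finite-energy balayage. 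Hence $\omega^A$ satisfies the symmetry relation (\ref{sym}), so it is the inner balayage of Definition~\ref{def-in}, and uniqueness is Lemma~\ref{uniq-in}.

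Finally I would settle the characterizations. For (i), $U^{\omega^A}\le U^\omega$ on $X$ is clear from the limit, and for each compact $K\subset A$ equality n.e.\ on $K$ holds (insert $K$ into a cofinal increasing sequence realizing the same strong limit); an inner-capacity argument over compact subsets then promotes this to $U^{\omega^A}=U^\omega$ n.e.\ on $A$. The recurring delicate point is that $\omega^A$ need only be concentrated on $\overline A$ by (\ref{supm}), so n.e.-on-$A$ identities cannot be integrated directly against $\omega^A$; I circumvent this by integrating them against approximants $\sigma_j\in\mathcal E^+(A)$ genuinely carried by $A$. Applied to any competitor $\zeta\in\mathcal E'_H(A)$ with $U^\zeta=U^\omega$ n.e.\ on $A$, this yields $\|\zeta\|^2=I(\omega^A,\zeta)=\|\omega^A\|^2$, hence $\zeta=\omega^A$, giving the uniqueness in (i). For (iii), $\omega^A\in\Gamma_{A,\omega}$ by (i), and for any $\nu\in\Gamma_{A,\omega}$ the domination principle gives $U^{\omega^K}\le U^\nu$ on $X$ for every $K$ (since $U^{\omega^K}=U^\omega\le U^\nu$ n.e.\ on $K$, hence $\omega^K$-a.e.), whence $U^{\omega^A}\le U^\nu$ by (\ref{c3}), which is (\ref{minpot}). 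Then (iv) follows from (iii), as $\|\omega^A\|^2=\int U^{\omega^A}\,d\omega^A\le\int U^\nu\,d\omega^A=I(\nu,\omega^A)\le\|\nu\|\,\|\omega^A\|$ for $\nu\in\Gamma_{A,\omega}$. I expect the main obstacle to be the uniform mass bound $\omega^K(X)\le H$, which is exactly what makes the net Cauchy without assuming $\cp_*A<\infty$, closely followed by the $\overline A$-versus-$A$ bookkeeping described above.
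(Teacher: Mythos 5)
Your overall architecture is exactly the paper's: solve the Gauss problem on each compact $K$ to get (\ref{balK})$\Leftrightarrow$(\ref{GaussK}), show that $(\omega^K)_{K\in\mathfrak C_A}$ is a mass-bounded strong Cauchy net, pass to the common strong/vague limit $\zeta\in\mathcal E'_H(A)$, verify the symmetry relation (\ref{sym}) by the Fubini chain $I(\omega^K,\lambda)=\int U^{\lambda^K}\,d\omega$ together with the monotone limits $U^{\omega^K}\uparrow U^\zeta$ and $U^{\lambda^K}\uparrow U^{\lambda^A}$, and then read off (i), (iii), (iv). (The paper outsources the compact case and the Cauchy property to \cite{Z-24}; your reconstruction of these is sound, and two of your variants are legitimate alternatives: for uniqueness in (i) you integrate the n.e.\ identity against approximants from $\mathcal E^+(A)$ and pass to the strong closure, where the paper argues more slickly via $\theta=\zeta^A=\zeta$ using (\ref{n.e.}) and (\ref{L}); for (iii) you run the domination principle on each compact $K$ and use (\ref{c3}), where the paper cites \cite[Theorem~3.1(a)]{Z-arx-22}.)

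There is, however, one genuine gap, at precisely the point you yourself flag as critical: the uniform bound $\omega^K(X)\leqslant H$. You derive it from the representation $\omega^K=\int\varepsilon_x^K\,d\omega(x)$ together with $\varepsilon_x^K(X)\leqslant h$, but neither ingredient is available in this setting. The balayage of a Dirac measure is not covered by the theory being developed: for $x\in\overline{K}$ (and ${\rm Supp}(\omega)$ may well meet $\overline{A}$ under (b), (c) alone), assumption (c) fails for $\omega:=\varepsilon_x$ whenever $\kappa$ is infinite on the diagonal, so $\varepsilon_x^K$ is not even defined; and the integral representation of swept measures is itself a deep theorem (Cartan's, for the Newtonian kernel), requiring measurability of $x\mapsto\varepsilon_x^K$ and a vector-integration apparatus that the standing hypotheses (a)--(c) do not supply. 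The correct short argument goes through the equilibrium measure $\gamma_K$: since $U^{\gamma_K}=1$ and $U^{\omega^K}=U^\omega$ n.e.\ on $K$, hence $\omega^K$-a.e., resp.\ $\gamma_K$-a.e.\ (Lemma~\ref{l1}), Lebesgue--Fubini gives
\[\omega^K(X)=\int U^{\gamma_K}\,d\omega^K=\int U^{\omega^K}\,d\gamma_K=\int U^\omega\,d\gamma_K=\int U^{\gamma_K}\,d\omega\leqslant h\,\omega(X)=H,\]
the last step holding because $U^{\gamma_K}\leqslant1$ $\gamma_K$-a.e.\ and $h$-Ugaheri's principle then give $U^{\gamma_K}\leqslant h$ on all of $X$; this is the same computation the paper runs in the proof of Proposition~\ref{th-tot}. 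With that repair, the Cauchy-net argument and everything downstream stands. Two smaller glosses: in (\ref{c3}), the inequality $\lim_{K\uparrow A}U^{\omega^K}\leqslant U^\zeta$ does not follow from monotonicity plus the principle of descent (descent yields the opposite direction); you need Lemma~\ref{l2} to get $U^{\omega^{K_j}}\to U^\zeta$ n.e.\ along a subsequence and then the domination principle, as in the paper. And in (iv) you prove minimality but should state explicitly that equality in your Cauchy--Schwarz chain forces $\nu=\omega^A$ (or invoke convexity of $\Gamma_{A,\omega}$ and the parallelogram law), so that uniqueness, not just minimality, is established.
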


\begin{remark}\label{REM}
 It will be clear from the proof of Theorem~\ref{th1} (Section~\ref{sec-proof1}) that, similarly as it occurred for $\lambda\in\mathcal E^+$, (\ref{n.e.'}) characterizes $\omega^A$ uniquely within $\mathcal E'(A)$.
\end{remark}

Keeping the permanent requirement (a), in the following Theorem~\ref{th2} we also assume that (d)--(f) are fulfilled, where:
\begin{itemize}
\item[(d)] {\it $\kappa(x,y)$ is continuous for $x\ne y$.}
\item[(e)] {\it $\kappa(\cdot,y)\to0$ uniformly on compact subsets of $X$ when $y\to\infty_X$.} (Here, $\infty_X$ denotes the Alexandroff point of $X$, see \cite[Section~I.9.8]{B1}.)
\item[(f)] {\it ${\rm Supp}(\omega)$ is compact, and moreover ${\rm Supp}(\omega)\cap\overline{A}=\varnothing$.}
\end{itemize}
Note that then, both (b) and (c) do hold automatically, and hence can be omitted.

 \begin{theorem}\label{th2} Under the requirements {\rm(a)} and {\rm(d)--(f)}, the inner balayage $\omega^A$, uniquely determined by means of either of Definition~{\rm\ref{def-in}} or Theorem~{\rm\ref{th1}}, can alternatively be characterized as follows:
\begin{itemize}
\item[{\rm(v)}] $\omega^A$ is the unique solution to the problem of minimizing $I_f(\mu)$ over $\mu\in\mathcal E'_H(A)$. That is, $\omega^A\in\mathcal E'_H(A)$ and
    \begin{equation}\label{hatw}
     I_f(\omega^A)=\min_{\mu\in\mathcal E'_H(A)}\,I_f(\mu).
    \end{equation}
\end{itemize}
\end{theorem}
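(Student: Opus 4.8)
The plan is to reduce the minimization in (\ref{hatw}) to an elementary completion of the square in the pre-Hilbert space $\mathcal E$, the target being the identity
\begin{equation*}
I_f(\mu)=\|\mu-\omega^A\|^2-\|\omega^A\|^2\quad\text{for every }\mu\in\mathcal E'_H(A).
\end{equation*}
Once this is available the conclusion is immediate: the right-hand side is minimized over $\mathcal E'_H(A)$ precisely, and only, at $\mu=\omega^A$, because $\omega^A\in\mathcal E'_H(A)$ by Theorem~\ref{th1} and $\|\cdot\|$ is a genuine norm, the kernel being strictly positive definite. This yields at once the minimal value $I_f(\omega^A)=-\|\omega^A\|^2$ and the uniqueness of the minimizer. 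Writing (\ref{Gf}) as $I_f(\mu)=\|\mu\|^2-2\int U^\omega\,d\mu$, the displayed identity is in turn equivalent to
\begin{equation*}
\int U^\omega\,d\mu=I(\omega^A,\mu)\quad\text{for every }\mu\in\mathcal E'_H(A).
\end{equation*}

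First I would verify this last relation on the dense subcone $\mathcal E^+_H(A)$. For $\mu\in\mathcal E^+_H(A)$ one has $\int U^{\omega^A}\,d\mu=I(\omega^A,\mu)<\infty$ by Cauchy--Schwarz, while $\int U^\omega\,d\mu<\infty$ since $U^\omega$ is bounded on $A$ and $\mu$ is bounded. By Theorem~\ref{th1}(i), cf.\ (\ref{n.e.'}), the Borel set $N:=\{x\in A:U^{\omega^A}(x)\ne U^\omega(x)\}$ satisfies $\cp_*N=0$. A measure of finite energy concentrated on $A$ cannot charge a Borel subset of $A$ of inner capacity zero: indeed, for any compact $K\subset N$ one has $\cp K=0$, hence $\mathcal E^+(K)=\{0\}$ by (\ref{iff}), while the trace $\mu|_K$ lies in $\mathcal E^+(K)$ because $I(\mu|_K)\leqslant I(\mu)<\infty$, so $\mu(K)=0$ and therefore $\mu(N)=\mu_*(N)=0$. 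Consequently $\int U^\omega\,d\mu=\int U^{\omega^A}\,d\mu=I(\omega^A,\mu)$ for all $\mu\in\mathcal E^+_H(A)$.

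The main obstacle is to pass from $\mathcal E^+_H(A)$ to its strong closure $\mathcal E'_H(A)$, that is, to show that $\mu\mapsto\int U^\omega\,d\mu$ is strongly continuous there; the naive ``n.e.\ on $A$'' argument breaks down because a general $\zeta\in\mathcal E'_H(A)$ is only concentrated on $\overline A$ (see (\ref{supm})), and nothing is known about $U^{\omega^A}$ versus $U^\omega$ on $\overline A\setminus A$. This is exactly where the hypotheses (d)--(f) are used. By (f) the set ${\rm Supp}(\omega)$ is compact and disjoint from $\overline A$, so together with (d) the potential $U^\omega$ is continuous on the open neighbourhood $X\setminus{\rm Supp}(\omega)$ of $\overline A$, while (e) and the compactness of ${\rm Supp}(\omega)$ force $U^\omega(x)\to0$ as $x\to\infty_X$. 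Thus $U^\omega$ restricted to $\overline A$ is continuous and vanishes at infinity, hence by a Tietze-type extension it agrees on $\overline A$ with some $g\in C_0(X)$. Given $\zeta\in\mathcal E'_H(A)$, perfectness provides a net $(\mu_j)\subset\mathcal E^+_H(A)$ converging to $\zeta$ both strongly and vaguely. Then $I(\omega^A,\mu_j)\to I(\omega^A,\zeta)$ by strong continuity of the inner product, whereas $\int U^\omega\,d\mu_j=\int g\,d\mu_j\to\int g\,d\zeta=\int U^\omega\,d\zeta$, the outer equalities holding because all these measures are concentrated on $\overline A$, where $g=U^\omega$, and the middle convergence being vague convergence tested against $g\in C_0(X)$. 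Passing to the limit in the equality $\int U^\omega\,d\mu_j=I(\omega^A,\mu_j)$ established in the previous step extends the required relation to all of $\mathcal E'_H(A)$, and the completion of the square then finishes the proof.
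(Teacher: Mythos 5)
Your proposal is correct in substance, but it takes a genuinely different route from the paper's. The paper never linearizes the problem: it exhausts $A$ by compact sets, invoking the known minimality (\ref{GaussK}) of $\omega^K$ over $\mathcal E^+_H(K)$, a trace-approximation lemma (Lemma~\ref{laux2}) to push an arbitrary competitor $\mu\in\mathcal E'_H(A)$ down to compact sets via $\mu_j|_{K_j}\to\mu$, and a continuity lemma for the full Gauss functional along mass-bounded, strongly convergent nets (Lemma~\ref{laux1}) to pass to the limit; uniqueness is then derived from convexity and the pre-Hilbert structure. You instead complete the square, reducing (v) to the single identity $\int U^\omega\,d\mu=I(\omega^A,\mu)$ for $\mu\in\mathcal E'_H(A)$, which makes minimality and uniqueness of the minimizer immediate and yields the explicit minimum value $-\|\omega^A\|^2$ as a bonus. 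Your two-step derivation of that identity (first on $\mathcal E^+_H(A)$ from (\ref{n.e.'}), then by a strong-plus-vague limit using (d)--(f)) is sound, but note that it can be had in one line: for $\mu\in\mathcal E'_H(A)\subset\mathcal E'(A)$, Definition~\ref{def-in} with $\lambda:=\mu$, combined with (\ref{L}), gives $I(\omega^A,\mu)=I(\mu^A,\omega)=I(\mu,\omega)=\int U^\omega\,d\mu$ by Tonelli, since $\kappa\geqslant0$. So your scheme is actually more economical than the paper's, and the continuity machinery where you invoke (d)--(f) is dispensable for the identity itself; those hypotheses are what drive the paper's chosen route through Lemma~\ref{laux1}.

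Two repairable inaccuracies. First, in this paper $C_0(X)$ denotes \emph{compactly supported} continuous functions, and your Tietze step cannot produce $g\in C_0(X)$ agreeing with $U^\omega$ on $\overline A$: if $\overline A$ is unbounded and $U^\omega>0$ there (as for Riesz kernels), no compactly supported function coincides with $U^\omega$ on $\overline A$. Tietze (applied, say, on the one-point compactification, which is normal) only gives a continuous $g$ \emph{vanishing at infinity}. Your convergence $\int g\,d\mu_j\to\int g\,d\zeta$ then requires more than vague convergence; it follows from the uniform mass bound $\mu_j(X),\zeta(X)\leqslant H$ by approximating $g$ uniformly by elements of $C_0(X)$ --- precisely the $\varepsilon$-splitting the paper carries out in (\ref{SSS'})--(\ref{SSS''}). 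Second, your set $N$ need not be Borel, since $A$ is arbitrary; write $N=A\cap\{U^{\omega^A}\ne U^\omega\}$, which is $\mu$-measurable for any $\mu$ concentrated on $A$, and your compact-exhaustion argument (this is the paper's Lemma~\ref{l1}) still yields $\mu(N)=0$, because $\mu$ is bounded and hence inner regular on $N$. With these two points patched, your proof is complete.
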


\begin{remark}If $X$ is second-countable while $\mathcal E^+(A)$ is strongly closed (which occurs e.g.\ if $A$ is quasiclosed \cite[Theorem~2.13]{Z-Oh}), then, by virtue of \cite{Z-24} (see Theorem~1.2(iv) and Eq.~(2.22) therein), Theorem~\ref{th2} remains valid under the (weaker) assumptions (b) and (c) in place of (d)--(f). Moreover, then $\omega^A\in\mathcal E^+_H(A)$ and
\[I_f(\omega^A)=\min_{\mu\in\mathcal E^+_H(A)}\,I_f(\mu)=
\min_{\mu\in\mathcal E^+_f(A)}\,I_f(\mu),\]
where $\mathcal E^+_f(A)$ consists of all $\nu\in\mathcal E^+(A)$ such that $f$ is $\nu$-integrable. For applications of these results to minimum energy problems with external fields, see \cite[Section~4]{Z-24}.
\end{remark}

\subsection{Outer balayage}\label{sec-main2} In this subsection, a locally compact space $X$ is assumed to be $\sigma$-compact and {\it perfectly normal}.\footnote{By Urysohn's theorem \cite[Section~IX.4, Theorem~1]{B3}, a Hausdorff topological space $Y$ is said to be {\it normal} if for any two disjoint closed sets $F_1,F_2\subset Y$, there exist disjoint open sets $D_1,D_2\subset Y$ such that $F_i\subset D_i$ $(i=1,2)$. Further, a normal space $Y$ is said to be {\it perfectly normal} if each closed subset of $Y$ is a countable intersection of open sets, see \cite[Exercise~7 to Section~IX.4]{B3}.} It is worth noting that a sufficient condition for this to occur is that $X$ be second-countable, but not the other way around.\footnote{Indeed, a locally compact space is second-countable if and only if it is metrizable and $\sigma$-com\-pact \cite[Section~IX.2, Corollary to Proposition~16]{B3}. Being therefore metrizable, a sec\-ond-count\-able locally compact space $X$ is perfectly normal \cite[Chapter~IX]{B3} (see Section~2, Proposition~7 and Section~4, Proposition~2), whereas the converse is false even in the case of a compact space, see \cite[Exercise~13(b) to Section~IX.2]{B3}.\label{FOOT}}

Assume, in addition, that $A\subset X$ is {\it Borel}. Then for every $\lambda\in\mathcal E^+$, there exists $\lambda^{*A}$, the {\it outer balayage} of $\lambda$ to $A$, uniquely determined within $\mathcal E'(A)$ by the property
\begin{equation}\label{eq-bal-ouu}
U^{\lambda^{*A}}=U^\lambda\quad\text{q.e.\ on $A$}
\end{equation}
(see \cite[Theorem~9.4]{Z-arx1}), and moreover, according to the same theorem,
\begin{equation}\label{BB}
 \lambda^{*A}=\lambda^A.
\end{equation}

The concept of outer balayage, introduced below for $\omega\in\mathfrak M^+$, agrees with that of outer Newtonian balayage by Cartan \cite[Section~18]{Ca2}.

\begin{definition}\label{def-outer} The {\it outer balayage} of $\omega\in\mathfrak M^+$ to $A$ is defined as $\omega^{*A}\in\mathcal E'(A)$ meeting the symmetry relation
\begin{equation}\label{sym''}
I(\omega^{*A},\lambda)=I(\lambda^{*A},\omega)\quad\text{for all $\lambda\in\mathcal E^+$},
\end{equation}
where $\lambda^{*A}$ denotes the only measure in $\mathcal E'(A)$ satisfying (\ref{eq-bal-ouu}).
\end{definition}

Observe that this definition differs from that of inner balayage (see Definition~\ref{def-in}) only by replacing an exceptional set in (\ref{n.e.}) by that of {\it outer} capacity zero, cf.\ (\ref{eq-bal-ouu}).

\begin{lemma}\label{uniq-ou}
The outer balayage $\omega^{*A}$ is unique {\rm(}if it exists{\rm)}.
\end{lemma}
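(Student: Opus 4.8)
The plan is to mirror exactly the argument used for Lemma~\ref{uniq-in}, since Definition~\ref{def-outer} is structurally identical to Definition~\ref{def-in}, differing only in that the exceptional set in the defining property of $\lambda^{*A}$ is of outer rather than inner capacity zero. The key observation is that the right-hand side $I(\lambda^{*A},\omega)$ in the symmetry relation (\ref{sym''}) is a fixed quantity depending only on $\lambda$, $\omega$, and $A$, and not on the candidate swept measure; thus any two measures satisfying (\ref{sym''}) must agree against every $\lambda\in\mathcal E^+$ when tested through the inner product.

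First I would suppose that (\ref{sym''}) holds for both $\omega^{*A}$ and some $\nu\in\mathcal E'(A)$, so that
\begin{equation*}
I(\nu,\lambda)=I(\lambda^{*A},\omega)\quad\text{for all $\lambda\in\mathcal E^+$}.
\end{equation*}
Both $\omega^{*A}$ and $\nu$ lie in $\mathcal E'(A)\subset\mathcal E^+$, so the left-hand sides $I(\omega^{*A},\lambda)$ and $I(\nu,\lambda)$ are finite (being inner products of finite-energy measures); consequently the common right-hand side is finite as well. Subtracting the two instances of (\ref{sym''}) then yields $I(\omega^{*A}-\nu,\lambda)=0$ for all $\lambda\in\mathcal E^+$. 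Since every element of the linear space $\mathcal E$ is a difference of two measures in $\mathcal E^+$, this extends by linearity to $I(\omega^{*A}-\nu,\lambda)=0$ for all $\lambda\in\mathcal E$.

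Finally, specializing to $\lambda:=\omega^{*A}-\nu\in\mathcal E$ gives $I(\omega^{*A}-\nu)=0$, and the strict positive definiteness of the kernel $\kappa$ (the energy principle, assumption (a)) forces $\omega^{*A}-\nu=0$, i.e.\ $\omega^{*A}=\nu$. There is essentially no obstacle here: the proof is a verbatim adaptation of Lemma~\ref{uniq-in}, and the only points requiring care are the finiteness bookkeeping (ensuring both sides of (\ref{sym''}) are finite so that subtraction is legitimate) and the passage from $\mathcal E^+$ to all of $\mathcal E$ by linearity. The uniqueness statement is independent of the existence question and does not invoke the $\sigma$-compactness, perfect normality, or Borel hypotheses imposed in this subsection; it relies solely on $\mathcal E$ being a pre-Hilbert space under $I(\cdot,\cdot)$.
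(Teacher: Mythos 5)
Your proposal is correct and is precisely the paper's argument: the paper proves Lemma~\ref{uniq-ou} by noting it ``follows in exactly the same manner as Lemma~\ref{uniq-in}'', and your write-up is exactly that adaptation --- finiteness of both sides of (\ref{sym''}), subtraction, extension by linearity from $\mathcal E^+$ to $\mathcal E$, and the substitution $\lambda:=\omega^{*A}-\nu$ combined with strict positive definiteness. Your closing observation that uniqueness needs none of the $\sigma$-compactness, perfect normality, or Borel hypotheses is also consistent with the paper, which invokes those only for existence.
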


\begin{proof}
  This follows in exactly the same manner as Lemma~\ref{uniq-in}.
\end{proof}

\begin{theorem}\label{th1'} If a locally compact space $X$ is $\sigma$-compact and perfectly normal, then for any Borel set $A\subset X$, Theorems~{\rm\ref{th1}} and {\rm\ref{th2}} remain valid with $\omega^A$ and "n.e.\ on $A$" replaced throughout by $\omega^{*A}$ and "q.e.\ on $A$", respectively. Actually,
\begin{equation}\label{eq-bal-oUU}\omega^{*A}=\omega^A.\end{equation}
\end{theorem}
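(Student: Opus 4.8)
The plan is to deduce the whole theorem from the single identity (\ref{eq-bal-oUU}), which I would establish first, and then to transfer the characterizations (i)--(v) of Theorems~\ref{th1} and \ref{th2}; the only point requiring genuine work is the passage from ``n.e.'' to ``q.e.''.

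First I would prove $\omega^{*A}=\omega^A$. By Theorem~\ref{th1} the inner balayage $\omega^A\in\mathcal E'(A)$ exists and satisfies the defining symmetry relation (\ref{sym}), that is, $I(\omega^A,\lambda)=I(\lambda^A,\omega)$ for every $\lambda\in\mathcal E^+$. Since $X$ is $\sigma$-compact and perfectly normal and $A$ is Borel, the finite-energy identity (\ref{BB}) gives $\lambda^A=\lambda^{*A}$ for all $\lambda\in\mathcal E^+$; inserting this into (\ref{sym}) yields $I(\omega^A,\lambda)=I(\lambda^{*A},\omega)$ for all $\lambda\in\mathcal E^+$. This is exactly the relation (\ref{sym''}) defining the outer balayage, with $\omega^A$ playing the role of $\omega^{*A}$. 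As $\omega^A\in\mathcal E'(A)$, the measure $\omega^A$ is therefore an outer balayage of $\omega$ to $A$, and the uniqueness asserted in Lemma~\ref{uniq-ou} forces $\omega^{*A}=\omega^A$, which is (\ref{eq-bal-oUU}).

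The key technical step is a capacitability remark allowing ``n.e.\ on $A$'' and ``q.e.\ on $A$'' to be interchanged whenever $A$ is Borel. For any $\mu,\nu\in\mathfrak M^+$ the potentials $U^\mu,U^\nu$ are l.s.c., hence Borel measurable, so for Borel $A$ the sets $A\cap\{U^\nu<U^\mu\}$ and $A\cap\{U^{\omega^A}\ne U^\omega\}$ are Borel. If a relation such as $U^\nu\geqslant U^\mu$ holds merely n.e.\ on $A$, it fails only on such a Borel set $E$ with $\cp_*E=0$; but under the standing hypotheses every Borel set is capacitable, so $\cp^*E=\cp_*E=0$ and the relation actually holds q.e.\ on $A$. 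Hence, for Borel $A$, the n.e.\ and q.e.\ versions of every such potential relation coincide: in particular $\Gamma_{A,\omega}$ in (\ref{G}) is unchanged when ``n.e.'' is replaced by ``q.e.'', and the identity (\ref{n.e.'}) upgrades to $U^{\omega^{*A}}=U^\omega$ q.e.\ on $A$.

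With these two ingredients the remaining transfer is routine. The characterizations (ii), (iii), (iv) of Theorem~\ref{th1} and (v) of Theorem~\ref{th2} involve only the measure $\omega^A$, the fixed cones $\mathcal E'_H(A)$, $\mathcal E^+(K)$, $\mathcal E^+_H(K)$, and the class $\Gamma_{A,\omega}$; since $\omega^{*A}=\omega^A$ and $\Gamma_{A,\omega}$ is insensitive to the n.e./q.e.\ distinction, each stays valid verbatim after the substitution. For (i), the upgraded potential identity gives $U^{\omega^{*A}}=U^\omega$ q.e.\ on $A$ with $\omega^{*A}\in\mathcal E'_H(A)$, while uniqueness follows because any $\nu\in\mathcal E'_H(A)$ with $U^\nu=U^\omega$ q.e.\ on $A$ also satisfies it n.e.\ on $A$ (outer capacity zero implies inner capacity zero), whence $\nu=\omega^A$ by Theorem~\ref{th1}(i). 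The main obstacle is the capacitability step: one must know that Borel subsets of $X$ satisfy $\cp_*=\cp^*$. This is exactly where both hypotheses on $X$ enter --- $\sigma$-compactness provides an exhaustion by compacts along which the inner capacity behaves as a Choquet capacity, and perfect normality ensures that closed (hence, together with $\sigma$-compactness, all Borel) sets are suitably approximable, so that the capacitability theorem applies to every Borel set. It is precisely the failure of this equality for general, non-Borel $A$ that confines the coincidence of inner and outer balayage to Borel sets.
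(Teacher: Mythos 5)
Your proposal is correct and takes essentially the same route as the paper: you obtain $\omega^{*A}=\omega^A$ by substituting (\ref{BB}) into the symmetry relation (\ref{sym}) and invoking the uniqueness of Lemmas~\ref{uniq-in} and \ref{uniq-ou}, and you transfer assertions (i)--(v) via the capacitability of Borel sets in a $\sigma$-compact, perfectly normal locally compact space --- exactly the content of the paper's Theorem~\ref{l-top}, which it gets as a direct application of \cite[Theorem~4.5]{F1}. The only difference is the order of the two steps (you prove (\ref{eq-bal-oUU}) first, the paper does the n.e./q.e.\ upgrade first), which is immaterial.
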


\begin{proof} As a direct application of \cite[Theorem~4.5]{F1}, we find the following conclusion.

\begin{theorem}\label{l-top}Any Borel subset of a $\sigma$-compact, perfectly normal, locally compact space $X$, endowed with a perfect kernel $\kappa$, is capacitable.
\end{theorem}

Since $A$ is Borel, so is each of the exceptional sets $N$ appearing in Theorems~\ref{th1} and \ref{th2}. Therefore, by virtue of Theorem~\ref{l-top}, $\cp^*N=\cp_*N=0$, and hence $\omega^A$ fulfils each of the assertions (i)--(v) with "q.e.\ on $A$" in place of "n.e.\ on $A$". To complete the proof, it is thus enough to show that $\omega^A$ also serves as the outer balayage $\omega^{*A}$. Indeed, substituting (\ref{BB}) into (\ref{sym}) yields (\ref{sym''}) with $\omega^{*A}$ replaced by $\omega^A$, which on account of Lemmas~\ref{uniq-in} and \ref{uniq-ou} implies (\ref{eq-bal-oUU}), whence the theorem.
\end{proof}

\section{Proofs of Theorems~\ref{th1} and \ref{th2}}\label{sec-proof}

We quote for future reference known or easily verified facts, useful in the sequel.

\begin{lemma}\label{l0}For any $\mu,\nu\in\mathcal E$ with $U^\mu=U^\nu$ q.e.\ on $X$, we have $\mu=\nu$.\end{lemma}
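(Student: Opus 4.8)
The plan is to reduce the claim to the strict positive definiteness of $\kappa$ by showing that the signed measure $\sigma:=\mu-\nu\in\mathcal E$ has zero energy. Since $\sigma\in\mathcal E$, its total variation $|\sigma|=\sigma^++\sigma^-$ has finite energy, i.e.\ $|\sigma|\in\mathcal E^+$ and $I(|\sigma|)<\infty$. The hypothesis gives $U^\sigma=U^\mu-U^\nu=0$ q.e.\ on $X$; I denote by $N$ the exceptional set, so that $U^\sigma=0$ on $X\setminus N$ while $\cp^*N=0$.

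First I would invoke the standard fact that a measure of finite energy charges no set of outer capacity zero (see \cite{F1}). Applied to $|\sigma|\in\mathcal E^+$, this yields $|\sigma|^*(N)=0$, that is, $N$ is $|\sigma|$-negligible, and hence $U^\sigma$ vanishes $|\sigma|$-almost everywhere on $X$. Next I would represent the energy of $\sigma$ by $I(\sigma)=\int U^\sigma\,d\sigma$, the integral being absolutely convergent; indeed, from $|U^\sigma|\leqslant U^{|\sigma|}$ one gets $\int|U^\sigma|\,d|\sigma|\leqslant\int U^{|\sigma|}\,d|\sigma|=I(|\sigma|)<\infty$, which also justifies the Fubini interchange underlying this representation. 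Since $U^\sigma=0$ $|\sigma|$-a.e., it follows that $\int U^\sigma\,d\sigma=0$, whence $I(\sigma)=0$. Because $\kappa$ is strictly positive definite, $I(\sigma)=0$ forces $\sigma=0$, that is, $\mu=\nu$.

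The only nonelementary ingredient is the assertion that finite-energy measures do not charge sets of outer capacity zero; once that is available, the remainder is a short computation. I expect the step requiring the most care to be the passage from \emph{$U^\sigma=0$ q.e.} to \emph{$U^\sigma=0$ $|\sigma|$-a.e.}, for it is precisely there that the finiteness of the energy of $\sigma$ (through $|\sigma|\in\mathcal E^+$) enters; the remaining identities reduce to the definition of energy and an application of Fubini's theorem.
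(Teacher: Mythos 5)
Your argument is correct and coincides in substance with the paper's proof: there, $\|\mu-\nu\|=0$ is obtained by citing \cite[Lemma~3.2.1(a)]{F1}, which encapsulates exactly the chain you spell out (a measure of finite energy charges no set of outer capacity zero, hence $I(\mu-\nu)=\int U^{\mu-\nu}\,d(\mu-\nu)=0$), after which the energy principle gives $\mu=\nu$. The only step you pass over silently --- that $U^{\mu-\nu}$ is well defined and equals $U^\mu-U^\nu$ q.e.\ on $X$, the potentials being finite q.e.\ by \cite[Corollary to Lemma~3.2.3]{F1} and the exceptional sets being merged via the countable subadditivity of outer capacity \cite[Lemma~2.3.5]{F1} --- is precisely what the paper cites Fuglede for, so with those references added your proof matches the paper's.
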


\begin{proof}On account of \cite[Corollary to Lemma~3.2.3]{F1} and the countable subadditivity of outer capacity \cite[Lemma~2.3.5]{F1}, we observe that $\mu-\nu$ is a (signed) measure of finite energy whose potential is well defined and equals $0$ q.e.\ on $X$. Therefore, by virtue of \cite[Lemma~3.2.1(a)]{F1}, $\|\mu-\nu\|=0$, whence $\mu=\nu$ by the energy principle.\end{proof}

\begin{lemma}\label{l1}
For any $\mu\in\mathcal E^+(E)$, where a set $E\subset X$ is $\mu$-$\sigma$-compact, and any universally measurable $U\subset X$ such that $\cp_*(E\cap U)=0$, we have $\mu^*(E\cap U)=0$.
\end{lemma}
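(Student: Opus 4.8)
The plan is to reduce the computation of the outer measure $\mu^*(E\cap U)$ to countably many compact pieces, on which $\mu$ is finite and hence inner regular, and then to annihilate each piece by means of the capacity criterion (\ref{iff}). First I would extract the operative consequence of the hypothesis that $E$ is $\mu$-$\sigma$-compact (the natural analogue of $\mu$-$\sigma$-finiteness from the footnote): there exist compact sets $K_j\subset X$, $j\in\mathbb N$, with $\mu^*\bigl(E\setminus\bigcup_j K_j\bigr)=0$. Writing $S:=E\cap U$, the countable subadditivity of outer measure then yields $\mu^*(S)\leqslant\sum_j\mu^*(S\cap K_j)$, so it suffices to prove $\mu^*(S\cap K_j)=0$ for each $j$. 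Each set $S\cap K_j=E\cap U\cap K_j$ is $\mu$-measurable, being the intersection of the $\mu$-measurable set $E$ (as $\mu$ is concentrated on $E$), the universally measurable, hence $\mu$-measurable, set $U$, and the Borel set $K_j$; moreover $\mu^*(S\cap K_j)\leqslant\mu(K_j)<\infty$, a Radon measure being finite on compact sets.

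Next I would invoke the inner regularity of the Radon measure $\mu$ on $\mu$-measurable sets of finite measure: since $S\cap K_j$ is such a set, $\mu^*(S\cap K_j)=\mu(S\cap K_j)$ and
\[\mu(S\cap K_j)=\sup\bigl\{\mu(K):\ K\subset S\cap K_j,\ K\ \text{compact}\bigr\}.\]
Fix any compact $K\subset S\cap K_j$. Then $K\subset E\cap U$, so $\cp_* K\leqslant\cp_*(E\cap U)=0$, whence $\mathcal E^+(K)=\{0\}$ by (\ref{iff}). On the other hand, the trace $\mu|_K=1_K\cdot\mu$ is concentrated on the compact set $K$, and, the kernel being nonnegative, $I(\mu|_K)=\int\kappa\,d(\mu|_K\otimes\mu|_K)\leqslant\int\kappa\,d(\mu\otimes\mu)=I(\mu)<\infty$. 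Hence $\mu|_K\in\mathcal E^+(K)=\{0\}$, so $\mu(K)=\mu|_K(X)=0$. As $K$ was arbitrary, the displayed supremum vanishes, giving $\mu^*(S\cap K_j)=0$; summing over $j$ then yields $\mu^*(E\cap U)=0$.

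I expect the main obstacle to be precisely the passage from ``every compact subset is $\mu$-null'' to ``the whole set is $\mu$-null'', i.e.\ the replacement of $\mu^*(S\cap K_j)$ by the supremum of $\mu$ over compact subsets. This is where the $\mu$-$\sigma$-compactness of $E$ is indispensable: without it the (possibly non-$\sigma$-finite) measure $\mu$ need not be inner regular on the measurable set $E\cap U$, so that $\mu^*(E\cap U)$ could strictly exceed its inner measure $\mu_*(E\cap U)=0$, and the capacity criterion (\ref{iff}), which speaks only of compact subsets, would not suffice. Cutting $E\cap U$ down to the compact pieces $K_j$, on which $\mu$ is finite, is exactly what legitimizes inner regularity and thereby the reduction to compact subsets governed by (\ref{iff}).
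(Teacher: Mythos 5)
Your argument is correct and is essentially the paper's own proof, merely with the measure theory spelled out: the paper likewise observes that $E\cap U$ is $\mu$-measurable (as the intersection of universally measurable $U$ with $\mu$-measurable $E$) and $\mu$-$\sigma$-compact, so that it suffices to show $\mu_*(E\cap U)=0$, which it then deduces from (\ref{iff}) exactly as you do via the traces $\mu|_K\in\mathcal E^+(K)=\{0\}$. Your explicit decomposition into the compact pieces $K_j$ and the appeal to inner regularity on $\mu$-measurable sets of finite measure are just the unpacked form of the paper's one-line reduction of the outer measure to the inner measure.
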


\begin{proof}
Being the intersection of universally measurable $U$ and $\mu$-measurable $E$, the set $E\cap U$ is $\mu$-measurable. Besides, $E\cap U$ is $\mu$-$\sigma$-compact, since so is $E$. It is therefore enough to show that $\mu_*(E\cap U)=0$, which is however obvious from (\ref{iff}).
\end{proof}

\begin{lemma}\label{str}
For any $E\subset X$ and any universally measurable $U_j\subset X$, $j\in\mathbb N$,
\[\cp_*\Bigl(\bigcup_{j\in\mathbb N}\,E\cap U_j\Bigr)\leqslant\sum_{j\in\mathbb N}\,\cp_*(E\cap U_j).\]\end{lemma}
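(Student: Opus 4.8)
The statement to prove is Lemma~\ref{str}, the countable subadditivity of inner capacity for sets of the form $E\cap U_j$ with $U_j$ universally measurable.

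\medskip

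The plan is to reduce the claim to the already-known countable subadditivity of \emph{outer} capacity, stated in the proof of Lemma~\ref{l0} as \cite[Lemma~2.3.5]{F1}. The natural strategy is to interpolate between inner and outer capacity using the capacitability of compact sets. First I would recall that inner capacity is defined as $\cp_*Q=\sup_{K\in\mathfrak C_Q}\cp K$ (the supremum of $\cp K$ over compact $K\subset Q$), and that for compact $K$ one has $\cp K=\cp_*K=\cp^*K$. Fix $E\subset X$ and universally measurable sets $U_j$, and write $S:=\bigcup_{j\in\mathbb N}E\cap U_j$. To estimate $\cp_*S$, I would take an arbitrary compact $K\subset S$ and bound $\cp K$ from above by $\sum_j\cp_*(E\cap U_j)$; taking the supremum over such $K$ then yields the lemma.

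\medskip

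The key step is therefore the following: given a compact $K\subset S=\bigcup_j(E\cap U_j)$, I want to split $K$ into universally measurable pieces $K_j\subset E\cap U_j$ so that $K=\bigcup_j K_j$. Since each $U_j$ is universally measurable, the sets $K\cap U_j$ are universally measurable subsets of $K$, hence capacitable and in particular their inner and outer capacities coincide with the relevant Choquet capacity. I would then use outer (countable) subadditivity on the capacitable sets $K\cap U_j$:
\begin{equation*}
\cp K=\cp^*K=\cp^*\Bigl(\bigcup_{j}K\cap U_j\Bigr)\leqslant\sum_{j}\cp^*(K\cap U_j)=\sum_{j}\cp_*(K\cap U_j).
\end{equation*}
Finally, because $K\subset E$ forces $K\cap U_j\subset E\cap U_j$, monotonicity of inner capacity gives $\cp_*(K\cap U_j)\leqslant\cp_*(E\cap U_j)$ for each $j$, so $\cp K\leqslant\sum_j\cp_*(E\cap U_j)$. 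Passing to the supremum over $K\in\mathfrak C_S$ completes the argument.

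\medskip

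The main obstacle I anticipate is justifying that the capacitability and outer subadditivity apparatus applies cleanly to the pieces $K\cap U_j$. The cleanest route is to work inside the compact set $K$: there $K\cap U_j$ is universally measurable, hence capacitable by the general capacitability theorem (the analogue of Theorem~\ref{l-top} for analytic/universally measurable sets, or directly \cite[Theorem~4.5]{F1} applied within $K$), which is what makes the identities $\cp^*(K\cap U_j)=\cp_*(K\cap U_j)$ legitimate. I would take care that no measurability or capacitability assumption is silently required of $E$ or of $S$ itself—only the $U_j$ are assumed universally measurable, and the argument uses this solely through the capacitability of $K\cap U_j$, while $E$ enters only via the harmless inclusion $K\subset E$ and monotonicity. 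A secondary point to verify is that the outer capacity $\cp^*$ is genuinely countably subadditive on \emph{all} subsets (as asserted in \cite[Lemma~2.3.5]{F1}), so that the middle inequality above needs no capacitability of the union; this is standard and may be invoked directly.
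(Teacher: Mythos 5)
Your overall skeleton---reduce to a compact $K\subset S:=\bigcup_{j}(E\cap U_j)$, observe that $K\subset E$ forces $K\cap U_j\subset E\cap U_j$, bound $\cp K$, and pass to the supremum over compact $K\subset S$---is exactly the right reduction. The gap is the pivotal middle step: ``$K\cap U_j$ is universally measurable, hence capacitable.'' Universal measurability is a purely measure-theoretic property ($\mu$-measurability for every Radon measure $\mu$), whereas capacitability, i.e.\ $\cp^*=\cp_*$, is guaranteed by Choquet-type theorems only for $K$-analytic sets (\cite[Theorem~4.5]{F1}), resp.\ for Borel sets under the extra topological hypotheses of Theorem~\ref{l-top} ($X$ $\sigma$-compact and perfectly normal). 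Lemma~\ref{str} is asserted for an \emph{arbitrary} locally compact space and arbitrary universally measurable $U_j$, so neither cited result applies; and the implication ``universally measurable $\Longrightarrow$ capacitable'' is not available in general (already the capacitability of coanalytic sets, which are universally measurable, is not provable in ZFC). Hence the identity $\cp^*(K\cap U_j)=\cp_*(K\cap U_j)$, on which your entire chain rests, is unjustified, and working ``inside the compact $K$'' does not rescue it: relativizing to $K$ does not turn a universally measurable set into a $K$-analytic one.

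The universal measurability of the $U_j$ has to be exploited measure-theoretically rather than through capacitability; this is the content of the remark following \cite[Lemma~2.3.5]{F1}, which is all the paper itself invokes (after noting that a strictly positive definite kernel is pseudo-positive, so that Fuglede's results apply). The underlying argument runs as follows: let $\gamma_K$ be the equilibrium measure of $K$ (\cite[Section~4.1]{F1}), so that $\gamma_K(X)=I(\gamma_K)=\cp K$ and $U^{\gamma_K}=1$ $\gamma_K$-a.e. Disjointify by setting $C_j:=(K\cap U_j)\setminus\bigcup_{i<j}U_i$; these sets are $\gamma_K$-measurable---this is the only place universal measurability enters---pairwise disjoint, cover $K$, and satisfy $C_j\subset E\cap U_j$, whence $\cp K=\sum_j\gamma_K(C_j)$. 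For any compact $K'\subset C_j$, the measure $\nu:=\gamma_K|_{K'}\in\mathcal E^+(K')$ has $U^\nu\leqslant U^{\gamma_K}\leqslant1$ $\nu$-a.e., so $I(\nu)\leqslant\nu(X)$ and therefore $\cp K'\geqslant\nu(X)^2/I(\nu)\geqslant\gamma_K(K')$; by inner regularity of $\gamma_K$ on the sets $C_j$, this yields $\gamma_K(C_j)\leqslant\cp_*(C_j)\leqslant\cp_*(E\cap U_j)$, and summing over $j$ gives $\cp K\leqslant\sum_j\cp_*(E\cap U_j)$, as required. With this replacement your detour through outer capacity and outer countable subadditivity becomes unnecessary.
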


\begin{proof}Since a strictly positive definite kernel is pseudo-positive, cf.\ \cite[p.~150]{F1}, the lemma follows directly from Fuglede \cite{F1} (see Lemma~2.3.5 and the remark after it). For the Newtonian kernel on $\mathbb R^n$, this goes back to Cartan \cite[p.~253]{Ca2}.\end{proof}

\begin{lemma}\label{l2}If a net $(\nu_s)\subset\mathcal E$ converges strongly to $\nu_0$, then there exists a subsequence $(\nu_k)$ whose potentials converge to $U^{\nu_0}$ pointwise n.e.\ on $X$.\end{lemma}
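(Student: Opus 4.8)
The plan is to replace the net by a sequence and then run a capacitary Borel--Cantelli argument. Since the strong topology on $\mathcal E$ is induced by the norm $\|\cdot\|$, the strong convergence $\nu_s\to\nu_0$ lets me choose indices $s_k$ with $\|\nu_{s_k}-\nu_0\|\le 2^{-k}$; writing $\nu_k:=\nu_{s_k}$ and $\sigma_k:=\nu_k-\nu_0\in\mathcal E$, I then have $\|\sigma_k\|\le 2^{-k}$, and it suffices to show that $U^{\sigma_k}\to0$ n.e.\ on $X$, where off a set of inner capacity zero all the potentials $U^{\nu_k},U^{\nu_0},U^{\sigma_k}$ are finite and $U^{\sigma_k}=U^{\nu_k}-U^{\nu_0}$ (the potential of a measure of finite energy being finite n.e., by the Corollary to Lemma~3.2.3 in \cite{F1} combined with the subadditivity in Lemma~\ref{str}).

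The key step is a Chebyshev-type capacitary estimate for the signed potential: for every $\sigma\in\mathcal E$ and every $\lambda>0$,
\[
\cp_*\bigl\{x\in X:\ U^\sigma(x)>\lambda\bigr\}\le\frac{\|\sigma\|^2}{\lambda^2}.
\]
To prove it, I would fix a compact $K\subset\{U^\sigma>\lambda\}$ with $\cp K>0$ and test against its capacitary measure $\gamma_K\in\mathcal E^+(K)$, which satisfies $\gamma_K(X)=\|\gamma_K\|^2=\cp K$ and $U^{\gamma_K}=1$ $\gamma_K$-a.e.\ (the existence and these properties being part of Fuglede's capacity theory in \cite{F1}, available since $\kappa$ is perfect and satisfies the domination principle). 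As $\gamma_K$ is concentrated on $K$ and $U^\sigma>\lambda$ throughout $K$,
\[
\lambda\,\cp K=\lambda\,\gamma_K(X)\le\int U^\sigma\,d\gamma_K=I(\sigma,\gamma_K)\le\|\sigma\|\,\|\gamma_K\|=\|\sigma\|\,\sqrt{\cp K},
\]
where the middle identity is the Fubini relation $\int U^\sigma\,d\gamma_K=I(\sigma,\gamma_K)$ for measures of finite energy and the last inequality is Cauchy--Schwarz in $\mathcal E$. Hence $\cp K\le\|\sigma\|^2/\lambda^2$, and taking the supremum over all such $K$ gives the claim; applying it also to $-\sigma$ and using Lemma~\ref{str} yields $\cp_*\{|U^\sigma|>\lambda\}\le 2\|\sigma\|^2/\lambda^2$.

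With the estimate in hand, set $E_k:=\{x:\ |U^{\sigma_k}(x)|>2^{-k/2}\}$; each $E_k$ is universally measurable up to a set of inner capacity zero (the $U^{\sigma_k^+},U^{\sigma_k^-}$ being l.s.c.), and $\cp_*E_k\le 2\|\sigma_k\|^2\,2^{k}\le 2\cdot 2^{-k}$. Putting $N:=\bigcap_{m\in\mathbb N}\bigcup_{k\ge m}E_k$, Lemma~\ref{str} gives, for every $m$, $\cp_*N\le\sum_{k\ge m}\cp_*E_k\le 2^{-m+2}$, whence $\cp_*N=0$; and for each $x\notin N$ one has $|U^{\sigma_k}(x)|\le 2^{-k/2}$ for all large $k$, i.e.\ $U^{\nu_k}(x)\to U^{\nu_0}(x)$. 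This is the desired convergence n.e.\ on $X$.

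I expect the main obstacle to be the signed capacitary inequality rather than the Borel--Cantelli bookkeeping. The naive route of bounding $|U^{\sigma_k}|\le U^{|\sigma_k|}$ and invoking the positive-measure estimate fails, because the energy of the total variation $|\sigma_k|$ is \emph{not} controlled by $\|\sigma_k\|$ (the cross term $I(\sigma_k^+,\sigma_k^-)$ may be large). Testing the signed potential directly against $\gamma_K$ and closing with Cauchy--Schwarz is exactly what keeps only $\|\sigma_k\|$ on the right-hand side; one must also check that $U^{\sigma_k}=U^{\sigma_k^+}-U^{\sigma_k^-}$ is well defined and $\gamma_K$-integrable, which holds because both Jordan parts have finite energy, so that $I(\sigma_k^{\pm},\gamma_K)<\infty$.
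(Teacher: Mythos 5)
Your proof is correct, but it takes a different route from the paper, which disposes of the lemma in two lines: since the strong topology on $\mathcal E$ is first-countable (being a norm topology), the net admits a subsequence converging strongly to $\nu_0$, and the n.e.-convergence of potentials of a strongly convergent \emph{sequence} (along a further subsequence) is then quoted directly from the remark attached to Lemma~3.2.4 in Fuglede's paper \cite{F1}. What you have done, in effect, is reconstruct the proof of that cited remark from scratch: your Chebyshev-type weak estimate $\cp_*\{U^\sigma>\lambda\}\leqslant\|\sigma\|^2/\lambda^2$, obtained by testing against the capacitary measure $\gamma_K$ of a compact $K\subset\{U^\sigma>\lambda\}$ (using $\gamma_K(X)=\|\gamma_K\|^2=\cp K$ and Cauchy--Schwarz), combined with the geometric choice $\|\sigma_k\|\leqslant2^{-k}$ and a Borel--Cantelli argument, is precisely the capacitary mechanism underlying Fuglede's result. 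Your execution is sound on the points where it could have gone wrong: you correctly avoid the trap of bounding $|U^{\sigma_k}|$ by $U^{|\sigma_k|}$ (whose energy is indeed not controlled by $\|\sigma_k\|$), you test the signed potential directly so that only $\|\sigma_k\|$ appears, and --- importantly --- you do not invoke unrestricted countable subadditivity of \emph{inner} capacity (which fails in general) but route it through Lemma~\ref{str}, legitimate here since the sets $E_k$ and the exceptional sets where the potentials are undefined are Borel, the potentials $U^{\sigma_k^\pm}$ being l.s.c. Two cosmetic remarks: when extracting $(\nu_{s_k})$ from the net you should choose the indices $s_k$ increasing in the directed order of $S$ (possible by directedness) so that the term ``subsequence'' is justified --- this is the step the paper handles via first-countability; and the property $U^{\gamma_K}=1$ $\gamma_K$-a.e.\ that you mention is never actually used in your chain of inequalities. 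The trade-off between the two approaches: the paper's proof is shorter but opaque without \cite{F1} in hand, while yours is self-contained, makes the quantitative dependence on the energy norm explicit, and exhibits exactly which standing hypotheses (energy principle and perfectness only; neither domination nor the maximum principle is truly needed for this lemma) carry the argument.
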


\begin{proof}Since the strong topology on $\mathcal E$ is first-countable, there exists a subsequence $(\nu_j)$ of the net $(\nu_s)$ that also converges strongly to $\nu_0$. Therefore, applying to such a sequence $(\nu_j)$ the remark attached to Lemma~3.2.4 in \cite{F1}, we arrive at the claim.\end{proof}

\begin{theorem}[Principle of positivity of mass]\label{pr-pos} Assume $X$ is $\sigma$-compact and Frostman's maximum principle holds.
For any $\mu,\nu\in\mathcal E^+$, then
\begin{equation*} U^\mu\leqslant U^\nu\quad\text{n.e.\ on $X$}\Longrightarrow\mu(X)\leqslant\nu(X).\end{equation*}
\end{theorem}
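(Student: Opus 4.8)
The plan is to exhaust $X$ by compact sets and to compare the masses of $\mu$ and $\nu$ by testing against the capacitary measures of the exhausting sets, the sharp constant $h=1$ being exactly what makes the comparison exact. If $\cp_*X=0$, then $\mathcal E^+=\{0\}$ by (\ref{iff}) and the claim is vacuous; so assume $\cp_*X>0$. Using $\sigma$-compactness, fix an increasing sequence of compact sets $K_n$ with $X=\bigcup_nK_n$ and $\cp K_n>0$ for all large $n$. For each such $n$, let $\gamma_n\in\mathcal E^+(K_n)$ be the capacitary measure of $K_n$, so that $U^{\gamma_n}=1$ n.e.\ on $K_n$ (Fuglede \cite{F1}); since $U^{\gamma_n}\leqslant1$ holds $\gamma_n$-a.e., Frostman's maximum principle upgrades this to $U^{\gamma_n}\leqslant1$ on all of $X$.

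I would first record that a measure of finite energy does not charge any set of inner capacity zero: if $\cp_*N=0$ then every compact $K\subset N$ has $\cp K=0$, so $\mathcal E^+(K)=\{0\}$ by (\ref{iff}), and as the trace of a finite-energy measure to $K$ again has finite energy (its energy being dominated by $I(\mu)$ since $\kappa\geqslant0$), it must vanish. Applying this to $\gamma_n$, and comparing the equilibrium potentials for $m\leqslant n$, we have $U^{\gamma_m}=1=U^{\gamma_n}$ n.e.\ on $K_m$, hence $U^{\gamma_m}\leqslant U^{\gamma_n}$ $\gamma_m$-a.e.; the domination principle (part of (a)) then propagates this inequality to all of $X$. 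Thus $(U^{\gamma_n})_n$ is nondecreasing, bounded above by $1$, and equal to $1$ n.e.\ on each $K_m$, so that $U^{\gamma_n}\uparrow1$ n.e.\ on $\bigcup_mK_m=X$.

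For the comparison itself, the exceptional set $N:=\{x:\ U^\mu(x)>U^\nu(x)\}$ has $\cp_*N=0$, so $\gamma_n$ does not charge it and $U^\mu\leqslant U^\nu$ holds $\gamma_n$-a.e. Integrating in $d\gamma_n$ and invoking the symmetry $\int U^\mu\,d\gamma_n=\int U^{\gamma_n}\,d\mu$ (valid by Tonelli, the kernel being nonnegative, and finite by the Cauchy--Schwarz inequality in $\mathcal E$) gives
\[
\int U^{\gamma_n}\,d\mu=\int U^\mu\,d\gamma_n\leqslant\int U^\nu\,d\gamma_n=\int U^{\gamma_n}\,d\nu.
\]
Because $\mu$ and $\nu$ likewise ignore the inner-capacity-zero set on which $U^{\gamma_n}$ does not increase to $1$, the monotone convergence theorem yields $\int U^{\gamma_n}\,d\mu\uparrow\mu(X)$ and $\int U^{\gamma_n}\,d\nu\uparrow\nu(X)$ in $[0,\infty]$, whence $\mu(X)\leqslant\nu(X)$.

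The main obstacle I anticipate lies in the properties of the capacitary measures: one needs their existence with $U^{\gamma_n}=1$ n.e.\ on $K_n$ together with the fact that Frostman's principle forces $U^{\gamma_n}\leqslant1$ \emph{everywhere}. It is precisely here that the assumption $h=1$ is indispensable, for an $h$-Ugaheri bound with $h>1$ would only give $U^{\gamma_n}\leqslant h$ and hence the weaker conclusion $\mu(X)\leqslant h\,\nu(X)$. The remainder is careful bookkeeping, ensuring that each ``nearly everywhere'' inequality upgrades to a $\gamma_n$-, $\mu$-, or $\nu$-almost-everywhere one, which is legitimate exactly because all three measures have finite energy and therefore disregard sets of inner capacity zero.
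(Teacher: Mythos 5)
Your proof is correct. Note that the paper itself gives no self-contained argument for this theorem: it simply cites \cite[Theorem~2.1]{Z-arx-22} (cf.\ Deny \cite{D2}), and the argument behind that citation is precisely your route — test the inequality against the equilibrium measures $\gamma_n$ of a compact exhaustion, use Frostman's principle ($h=1$) to get $U^{\gamma_n}\leqslant1$ everywhere, pass the inequality through the reciprocity $\int U^\mu\,d\gamma_n=\int U^{\gamma_n}\,d\mu$, and let $n\to\infty$. You also correctly identified and handled the two genuinely delicate bookkeeping points: upgrading each ``n.e.'' statement to a $\gamma_n$-, $\mu$-, or $\nu$-a.e.\ one (this is exactly the paper's Lemma~\ref{l1}, and it is here — together with the exhaustion itself — that $\sigma$-compactness is indispensable), and combining countably many inner-capacity-zero exceptional sets, which is legitimate because the sets involved are Borel and inner capacity is countably subadditive on universally measurable sets (the paper's Lemma~\ref{str}). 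One small remark: the monotonicity of $(U^{\gamma_n})$ via the domination principle, while valid under the standing assumption (a), is dispensable — since $U^{\gamma_n}=1$ holds $\mu$-a.e.\ on $K_n$, one gets $\int U^{\gamma_n}\,d\mu\geqslant\mu(K_n)\to\mu(X)$ directly, while $\int U^{\gamma_n}\,d\nu\leqslant\nu(X)$ from $U^{\gamma_n}\leqslant1$, so Fatou-type reasoning replaces monotone convergence and the proof needs only Frostman, not domination.
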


\begin{proof}See Zorii \cite[Theorem~2.1]{Z-arx-22}, cf.\ Deny \cite{D2}.\footnote{Compare with \cite[Theorem~1.2]{Z-Deny}, providing quite a surprising version of the principle of positivity of mass for the $\alpha$-Riesz kernels $|x-y|^{\alpha-n}$ of order $0<\alpha\leqslant2$, $\alpha<n$, on $\mathbb R^n$, $n\geqslant2$.}\end{proof}

\subsection{Proof of Theorem~\ref{th1}}\label{sec-proof1}
It is clear from \cite{Z-24} (see (i), (ii), (iv) in Theorem~1.2, as well as Eq.~(2.14) and the sentence following it) that for every $K\in\mathfrak C_A$, the inner balayage $\omega^K$, introduced by Definition~\ref{def-in} above, does exist, and it is uniquely characterized within $\mathcal E^+(K)$ by (\ref{balK}) or, equivalently, (\ref{GaussK}).\footnote{In \cite{Z-24}, the space $X$ is required to be sec\-ond-cou\-n\-t\-able, which is, however, superfluous when dealing with compact sets.}
Thus, for these $\omega^K$,
\begin{equation}\label{BalK}\int U^{\omega^K}\,d\lambda=\int U^{\lambda^K}\,d\omega\quad\text{for all $\lambda\in\mathcal E^+$},\end{equation}
where $\lambda^K$ is the only measure in $\mathcal E^+(K)$ meeting (\ref{n.e.}) with $A:=K$.

Furthermore, these $\omega^K$, $K\in\mathfrak C_A$, form a strong Cauchy net in $\mathcal E^+_H(A)$, see the text around Eq.~(2.15) in \cite{Z-24}. As $\mathcal E^+_H(A)\subset\mathcal E'_H(A)$ while $\mathcal E'_H(A)$ is strongly complete (see Section~\ref{sec-main1} above), there exists the unique measure $\zeta\in\mathcal E'_H(A)$ such that
\begin{equation}\label{oconv}
 \omega^K\to\zeta\quad\text{strongly (hence vaguely) in $\mathcal E'_H(A)$ as $K\uparrow A$},
\end{equation}
the strong topology on $\mathcal E$ as well as the vague topology on $\mathfrak M$ being Hausdorff.

We claim that the same $\zeta$ is uniquely determined within $\mathcal E'_H(A)$ by the relation
\begin{equation}\label{potconv}
 U^{\omega^K}\uparrow U^\zeta\quad\text{pointwise on $X$ as $K\uparrow A$}.
\end{equation}
In fact, it is seen from (\ref{balK}) that for any $K'\geqslant K$, we have $U^{\omega^K}=U^{\omega^{K'}}$ n.e.\ on $K$, hence $\omega^K$-a.e.\ (Lemma~\ref{l1}), and so $U^{\omega^K}\leqslant U^{\omega^{K'}}$ on $X$ (the domination principle). Therefore, the net $(U^{\omega^K})$ increases pointwise on all of $X$.

On the other hand, $\omega^K\to\zeta$ strongly, which implies, by use of Lemma~\ref{l2}, that
\[U^{\omega^K}\uparrow U^\zeta\quad\text{pointwise n.e.\ on $X$ as $K\uparrow A$}.\]
Thus, $U^\zeta\geqslant U^{\omega^K}$ n.e.\ on $K$, hence $\omega^K$-a.e., and so, by the domination principle,
\begin{equation}\label{g''}
U^\zeta\geqslant\lim_{K\uparrow A}\,U^{\omega^K}\quad\text{on all of $X$}.
\end{equation}
Since the opposite is obvious from the vague lower semicontinuity of the mapping $\nu\mapsto U^\nu(\cdot)$ on $\mathfrak M^+$ (the principle of descent \cite[Lemma~2.2.1(b)]{F1}), equality actually prevails in (\ref{g''}), whence (\ref{potconv}).

Assume now that (\ref{potconv}) also holds for some $\theta\in\mathcal E'_H(A)$ in place of $\zeta$; then obviously $U^\theta=U^\zeta$ on $X$, and applying Lemma~\ref{l0} therefore gives $\theta=\zeta$ as claimed.

To complete the proof of (ii), it remains to show that the above $\zeta$ serves as the inner balayage $\omega^A$, or equivalently (cf.\ Definition~\ref{def-in})
\begin{equation}\label{zeta}
I(\zeta,\lambda)=I(\omega,\lambda^A)\quad\text{for all $\lambda\in\mathcal E^+$},
\end{equation}
where $\lambda^A$ is the only measure in $\mathcal E'(A)$ meeting (\ref{n.e.}). But this follows at once by passing to the limits in (\ref{BalK}) when $K\uparrow A$ and utilizing \cite[Section~IV.1, Theorem~1]{B2}, (\ref{potconv}), and the relation (see \cite[Theorem~4.8]{Z-arx1})
\[U^{\lambda^K}\uparrow U^{\lambda^A}\quad\text{pointwise on $X$ as $K\uparrow A$}.\]

Exploiting Lemma~\ref{l2} once again, we derive from (\ref{balK}) and (\ref{oconv}) that, actually,\footnote{It has been used here that (\ref{balA}) only needs to be verified on compact subsets of $A$. Besides, the countable subadditivity of inner capacity on universally measurable sets has been utilized as well.}
\begin{equation}\label{balA}
 U^\zeta=U^\omega\quad\text{n.e.\ on $A$}.
\end{equation}
Assume (\ref{balA}) also holds for some $\theta\in\mathcal E'_H(A)$ in place of $\zeta$. Then, by Lemma~\ref{str},
\[U^\theta=U^\zeta\quad\text{n.e.\ on $A$},\]
and so, by virtue of (\ref{L}) and the characteristic property of $\zeta^A$ given by (\ref{n.e.}),
\[\theta=\zeta^A=\zeta.\]
Thus, $\zeta$ is the only measure in $\mathcal E'_H(A)$ whose potential coincides with $U^\omega$ n.e.\ on $A$. Since, according to (\ref{zeta}), $\zeta=\omega^A$, this verifies (i).

In view of (\ref{balA}), $\zeta\in\Gamma_{A,\omega}$, the class $\Gamma_{A,\omega}$ being introduced by means of (\ref{G}). As $\zeta=\omega^A$, (\ref{minpot}) will therefore follow once we show that for any given $\nu\in\Gamma_{A,\omega}$,
\begin{equation}\label{G1}
U^\nu\geqslant U^\zeta\quad\text{on $X$}.
\end{equation}

Combining (\ref{balA}) with $U^\nu\geqslant U^\omega$ n.e.\ on $A$ implies, by use of Lemma~\ref{str}, that
\[U^\nu\geqslant U^\zeta\quad\text{n.e.\ $A$},\]
and hence $\nu$ also belongs to the class $\Gamma_{A,\zeta}$, given by (\ref{G}) with $\zeta$ in place of $\omega$. Therefore, according to \cite[Theorem~3.1(a)]{Z-arx-22},
\begin{equation*}
U^\nu\geqslant U^{\zeta^A}\quad\text{on $X$},
\end{equation*}
whence (\ref{G1}), for $\zeta^A=\zeta$ by virtue of (\ref{L}) with $\lambda:=\zeta\in\mathcal E'_H(A)$.

To complete the proof of (iii), assume (\ref{minpot}) also holds for some $\theta\in\Gamma_{A,\omega}$ in place of $\zeta=\omega^A\in\Gamma_{A,\omega}$. Then, clearly, $U^\zeta\geqslant U^\theta\geqslant U^\zeta$ on $X$, whence $\theta=\zeta$ (Lemma~\ref{l0}).

The proof of (\ref{minen}) runs in a manner similar to that of (\ref{minpot}), the only difference being in applying \cite[Definition~3.1]{Z-arx-22} in place of \cite[Theorem~3.1(a)]{Z-arx-22}. Noting by utilizing Lemma~\ref{str} that the class $\Gamma_{A,\omega}$ is convex, we finally conclude that the solution to problem (\ref{minen}) is unique, which follows with the aid of standard arguments based on the parallelogram identity in the pre-Hil\-bert space $\mathcal E$ and the strict positive definiteness of the kernel $\kappa$. This establishes (iv), whence the theorem.

\subsection{Proof of Theorem~\ref{th2}}\label{sec-proof2}

We first prove two preparatory lemmas. Observe that in Lemma~\ref{laux2}, only the perfectness of
the kernel in question is, in fact, used.

\begin{lemma}\label{laux2} For any $\mu\in\mathcal E^+(A)$,
\begin{equation}\label{cK}
  \mu|_K\to\mu\quad\text{strongly in $\mathcal E^+(A)$ as $K\uparrow A$}.
  \end{equation}
\end{lemma}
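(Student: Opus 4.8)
The plan is to reduce the assertion to the statement that the energy of the ``tail'' measure vanishes in the limit. For each $K\in\mathfrak C_A$, write $\nu_K:=\mu-\mu|_K=\mu|_{X\setminus K}$, a positive measure dominated by $\mu$. Since $\mu|_K\leqslant\mu$, its potential satisfies $U^{\mu|_K}\leqslant U^\mu$ pointwise, hence $I(\mu|_K)=\int U^{\mu|_K}\,d\mu|_K\leqslant\int U^\mu\,d\mu=I(\mu)<\infty$; thus $\mu|_K\in\mathcal E^+(K)\subset\mathcal E^+(A)$, so the net $(\mu|_K)$ genuinely lies in $\mathcal E^+(A)$. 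Because $I(\cdot)$ is a quadratic form, $\|\mu-\mu|_K\|^2=I(\nu_K)$, so that (\ref{cK}) amounts exactly to $I(\nu_K)\to0$ as $K\uparrow A$.

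To estimate $I(\nu_K)$, I would exploit $\nu_K\leqslant\mu$, which gives $U^{\nu_K}\leqslant U^\mu$ on $X$ (this uses only $\kappa\geqslant0$, not the domination principle). Consequently
\[
I(\nu_K)=\int U^{\nu_K}\,d\nu_K\leqslant\int U^\mu\,d\nu_K=\int_{X\setminus K}U^\mu\,d\mu=I(\mu)-\int_K U^\mu\,d\mu,
\]
where the final equality splits $\int U^\mu\,d\mu=I(\mu)$ according to $1=1_K+1_{X\setminus K}$. Hence it suffices to prove that $\int_K U^\mu\,d\mu\uparrow I(\mu)$ as $K\uparrow A$.

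The crux is this monotone passage to the limit along the directed set $\mathfrak C_A$. I would introduce the auxiliary measure $\widetilde\mu:=U^\mu\cdot\mu$. As $U^\mu$ is a nonnegative l.s.c.\ (hence Borel) function that is $\mu$-integrable, $\widetilde\mu$ is a \emph{bounded} positive Radon measure with $\widetilde\mu(X)=I(\mu)$. Moreover $\mu=1_A\cdot\mu$ and $U^\mu\geqslant0$ force $\widetilde\mu=1_A\cdot\widetilde\mu$, i.e.\ $\widetilde\mu\in\mathfrak M^+(A)$. Recalling from Section~\ref{sec-intr1} that the total mass of a measure concentrated on $A$ equals its inner measure of $A$, and that the inner measure is by definition the supremum of the masses of compact subsets, we obtain
\[
I(\mu)=\widetilde\mu(X)=\widetilde\mu_*(A)=\sup_{K\in\mathfrak C_A}\widetilde\mu(K)=\sup_{K\in\mathfrak C_A}\int_K U^\mu\,d\mu.
\]
Since $K\mapsto\int_K U^\mu\,d\mu$ is increasing on $\mathfrak C_A$, this supremum is precisely the net limit, giving $\int_K U^\mu\,d\mu\uparrow I(\mu)$ and therefore $I(\nu_K)\to0$, which is (\ref{cK}).

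The main obstacle is this last step: turning the supremum over compacta into a bona fide limit along $\mathfrak C_A$ and justifying $\sup_K\int_K U^\mu\,d\mu=I(\mu)$. This hinges on recognizing $U^\mu\cdot\mu$ as a bounded Radon measure concentrated on $A$, so that the equality of its total mass with its inner measure of $A$ applies; everything else is elementary. This is also consistent with the remark preceding the lemma: only the perfectness of $\kappa$ (through the energy norm) is used, while neither the domination nor the maximum principles enter.
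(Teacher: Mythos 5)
Your proof is correct, but it takes a genuinely different route from the paper's. The paper never estimates the tail energy directly: it first obtains $\mu|_K\to\mu$ vaguely (via \cite[Lemma~1.2.2]{F1}), then uses $\kappa\geqslant0$ to show that the potentials $U^{\mu|_K}$ increase, whence $\|\mu|_K-\mu|_{K'}\|^2\leqslant\|\mu|_{K'}\|^2-\|\mu|_K\|^2$ for $K\leqslant K'$; since the norms $\|\mu|_K\|$ increase and are bounded by $\|\mu\|$, the net is strong Cauchy, and the \emph{perfectness} of $\kappa$ is then invoked to identify the strong limit with the vague limit $\mu$. You instead bound the tail explicitly: $\|\mu-\mu|_K\|^2=I\bigl(\mu|_{K^c}\bigr)\leqslant\int_{K^c}U^\mu\,d\mu$, and drive this to $0$ by viewing $\widetilde\mu:=U^\mu\cdot\mu$ as a bounded Radon measure concentrated on $A$, whose total mass $I(\mu)$ equals its inner measure of $A$, i.e.\ $\sup_{K\in\mathfrak C_A}\int_K U^\mu\,d\mu$. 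All your steps check out: $U^{\mu|_{K^c}}\leqslant U^\mu$ needs only $\kappa\geqslant0$, not the domination principle; $U^\mu$ is $\mu$-integrable because $I(\mu)<\infty$ (Fubini for positive l.s.c.\ integrands), so $\widetilde\mu$ is indeed a bounded Radon measure, and it inherits concentration on $A$ from $\mu$ since it charges no locally $\mu$-negligible set, so the identity $\widetilde\mu(X)=\widetilde\mu_*(A)$ from Section~\ref{sec-intr1} applies; and the limit of an increasing net along $\mathfrak C_A$ is its supremum. What your argument buys: it is quantitative (an explicit tail bound $\|\mu-\mu|_K\|^2\leqslant I(\mu)-\int_K U^\mu\,d\mu$), it identifies the limit directly rather than via a Cauchy argument, and it dispenses with perfectness altogether — using neither the consistency principle nor vague convergence, only $\kappa\geqslant0$ and the inner regularity built into Radon measures — thereby actually sharpening the remark preceding the lemma, which asserts that only perfectness is used. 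One cosmetic quibble: your closing sentence claims your proof is ``consistent with the remark'' in that perfectness is used; in fact you never use perfectness at all, which is a strength worth stating plainly.
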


\begin{proof} As $\mu|_A=\mu$ (Section~\ref{sec-intr1}), \cite[Lemma~1.2.2]{F1} with positive $g\in C_0(X)$ gives
\begin{equation}\label{vconv}
 \mu|_K\to\mu\quad\text{vaguely as $K\uparrow A$}.
\end{equation}
Further, since $\kappa\geqslant0$, the net $\bigl(U^{\mu|_K}\bigr)$ increases pointwise on $X$ as $K\uparrow A$, whence
\[\|\mu|_K\|^2\leqslant\bigl\langle\mu|_K,\mu|_{K'}\bigr\rangle\quad\text{for all $K'\geqslant K$},\]
and therefore
\begin{equation}\label{2}\|\mu|_K-\mu|_{K'}\|^2\leqslant\|\mu|_{K'}\|^2-\|\mu|_K\|^2.\end{equation}
But the net $\bigl(\|\mu|_K\|\bigr)$ also increases as $K\uparrow A$ and does not exceed $\|\mu\|$,\footnote{It holds true, actually, that $\|\mu|_K\|\uparrow\|\mu\|$ as $K\uparrow A$, which is clear from (\ref{vconv}) on account of the vague lower semicontinuity of $I(\nu)$ on $\nu\in\mathfrak M^+$ (the principle of descent \cite[Lemma~2.2.1(e)]{F1}).} hence it is Cauchy in $\mathbb R$.
In view of (\ref{2}), this implies that the net $(\mu|_K)_{K\in\mathfrak C_A}$ is strong Cauchy in $\mathcal E^+(A)$. But then,
according to the definition of a perfect kernel, this net must converge strongly to its vague limit, which together with (\ref{vconv}) establishes (\ref{cK}).
\end{proof}

In the rest of this section, the assumptions of Theorem~\ref{th2} are required to hold. Then, clearly, $U^\omega$ is bounded and continuous on $\overline{A}$, and moreover
\begin{equation}\label{liminfty}
\lim_{y\to\infty_X}\,U^\omega(y)=0.
\end{equation}

\begin{lemma}\label{laux1}
If a net $(\mu_s)_{s\in S}\subset\mathcal E^+(\overline{A})$ converges strongly to $\mu_0$, and
\begin{equation}\label{mum}
\mu_s(X)\leqslant M<\infty\quad\text{for all $s\in S$},
\end{equation}
then
\begin{equation*}
\lim_{s\in S}\,I_f(\mu_s)=I_f(\mu_0).
\end{equation*}
\end{lemma}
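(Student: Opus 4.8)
The plan is to establish $\lim_{s\in S}I_f(\mu_s)=I_f(\mu_0)$ by analyzing the two terms in the Gauss functional $I_f(\mu)=\|\mu\|^2-2\int U^\omega\,d\mu$ separately. For the energy term, strong convergence $\mu_s\to\mu_0$ means precisely $\|\mu_s-\mu_0\|\to0$, and since on a pre-Hilbert space the norm is strongly continuous, we immediately get $\|\mu_s\|^2\to\|\mu_0\|^2$. So the genuine work concentrates on the linear term, namely showing $\int U^\omega\,d\mu_s\to\int U^\omega\,d\mu_0$. Equivalently, writing $\int U^\omega\,d\mu=I(\omega,\mu)$, this amounts to continuity of $\mu\mapsto I(\omega,\mu)$ along the net; the subtlety is that $\omega$ need not have finite energy, so I cannot simply invoke Cauchy--Schwarz in $\mathcal E$.

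First I would record the consequences of the hypotheses of Theorem~\ref{th2} already noted in the excerpt: under (a) and (d)--(f), the potential $U^\omega$ is bounded and continuous on $\overline{A}$, and by (\ref{liminfty}) it vanishes at the Alexandroff point $\infty_X$. In particular, extending $U^\omega$ by its limiting value $0$, the function $U^\omega$ is a bounded continuous function on $\overline{A}\cup\{\infty_X\}$ that tends to $0$ at infinity; hence its restriction to $\overline{A}$ belongs (after the one-point compactification) to a function vanishing at infinity. The key point is that all the measures $\mu_s$ and $\mu_0$ are concentrated on the closed set $\overline{A}$ (the cone $\mathcal E^+(\overline A)$ is vaguely closed, so the strong and hence vague limit $\mu_0$ also lies in $\mathcal E^+(\overline A)$), so the integrals $\int U^\omega\,d\mu_s=\int_{\overline A}U^\omega\,d\mu_s$ only see $U^\omega$ on $\overline A$.

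The plan for the linear term is then to combine vague convergence with the uniform mass bound (\ref{mum}) to integrate the bounded continuous, vanishing-at-infinity test function $U^\omega$. Concretely: strong convergence yields vague convergence $\mu_s\to\mu_0$ (perfectness of $\kappa$, as used repeatedly in the excerpt, e.g.\ around (\ref{oconv})). For a function $g\in C_0(X)$ the defining property of the vague topology gives $\int g\,d\mu_s\to\int g\,d\mu_0$ directly; to upgrade this to the function $U^\omega$, which is continuous and bounded but not of compact support, I would split $U^\omega$ as $\varphi_\varepsilon+r_\varepsilon$ where $\varphi_\varepsilon\in C_0(X)$ agrees with $U^\omega$ on a large compact set and $\|r_\varepsilon\|_\infty$ is controlled: since $U^\omega\to0$ at $\infty_X$, for any $\varepsilon>0$ there is a compact $K_\varepsilon$ with $0\le U^\omega<\varepsilon$ off $K_\varepsilon$, and by Urysohn one produces $\varphi_\varepsilon\in C_0(X)$ with $0\le\varphi_\varepsilon\le U^\omega$ and $\varphi_\varepsilon=U^\omega$ on $K_\varepsilon$, so $0\le U^\omega-\varphi_\varepsilon<\varepsilon$ everywhere. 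Then
\[
\Bigl|\int U^\omega\,d\mu_s-\int U^\omega\,d\mu_0\Bigr|\le\Bigl|\int\varphi_\varepsilon\,d(\mu_s-\mu_0)\Bigr|+\varepsilon\,\mu_s(X)+\varepsilon\,\mu_0(X)\le\Bigl|\int\varphi_\varepsilon\,d(\mu_s-\mu_0)\Bigr|+2\varepsilon M,
\]
using (\ref{mum}) and $\mu_0(X)\le M$ (the latter by vague lower semicontinuity of total mass, cf.\ the argument for (\ref{supm})). The first term tends to $0$ along the net by vague convergence, so $\limsup_s|\int U^\omega\,d(\mu_s-\mu_0)|\le 2\varepsilon M$, and letting $\varepsilon\downarrow0$ gives the desired convergence of the linear term.

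The main obstacle, and the reason the hypotheses (d)--(f) are needed rather than merely (b)--(c), is precisely this uniform-tail control of $U^\omega$: without the decay (\ref{liminfty}) at infinity (which flows from (e) and the compactness of $\mathrm{Supp}(\omega)$ in (f)), one could not dominate the non-compact part of the integral uniformly in $s$, and the vague limit would fail to transfer to the integral against $U^\omega$. Once both pieces are in hand, I would assemble them: $\|\mu_s\|^2\to\|\mu_0\|^2$ and $\int U^\omega\,d\mu_s\to\int U^\omega\,d\mu_0$ combine through the definition (\ref{Gf}) to give $I_f(\mu_s)=\|\mu_s\|^2-2\int U^\omega\,d\mu_s\to\|\mu_0\|^2-2\int U^\omega\,d\mu_0=I_f(\mu_0)$, completing the proof.
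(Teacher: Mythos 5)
Your overall strategy is the same as the paper's: split $I_f$ into the norm term (handled by strong convergence) and the linear term $\int U^\omega\,d\mu_s$, pass to vague convergence via perfectness, get $\mu_0(X)\leqslant M$ from vague lower semicontinuity of total mass, control the tail using (\ref{liminfty}), and transfer vague convergence through a compactly supported continuous approximant of $U^\omega$. But there is one genuine flaw in your construction of that approximant. You claim that "by Urysohn one produces $\varphi_\varepsilon\in C_0(X)$ with $0\leqslant\varphi_\varepsilon\leqslant U^\omega$ and $\varphi_\varepsilon=U^\omega$ on $K_\varepsilon$, so $0\leqslant U^\omega-\varphi_\varepsilon<\varepsilon$ everywhere." This presupposes that $U^\omega$ is continuous and real-valued on all of $K_\varepsilon$, which the hypotheses do not give: assumption (d) makes $\kappa$ continuous only \emph{off} the diagonal, so $U^\omega$ is continuous only on $X\setminus{\rm Supp}(\omega)$, while on ${\rm Supp}(\omega)$ it is merely l.s.c.\ and may even be identically $+\infty$ there (take the Riesz kernel and $\omega$ a point mass). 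Moreover, $K_\varepsilon$ must contain $\{U^\omega\geqslant\varepsilon\}$, so for small $\varepsilon$ it necessarily contains points of ${\rm Supp}(\omega)$ whenever $U^\omega$ is large near the support --- exactly where no continuous function can agree with $U^\omega$. So the step fails as written, and the sandwich $0\leqslant U^\omega-\varphi_\varepsilon<\varepsilon$ "everywhere" is unattainable.

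The repair uses precisely the observation you made but did not exploit: since $\mu_s,\mu_0$ are concentrated on $\overline{A}$, and by (f) ${\rm Supp}(\omega)\cap\overline{A}=\varnothing$, so that $U^\omega$ is bounded and continuous on $\overline{A}$, you only need $|U^\omega-\varphi_\varepsilon|\leqslant\varepsilon$ \emph{on $\overline{A}$}, not on $X$. This is how the paper proceeds: it applies the Tietze--Urysohn extension theorem to the continuous function $U^\omega$ restricted to the compact set $K_0\cap\overline{A}$ (compact subspaces of $X$ being normal), producing a positive $\varphi\in C_0(X)$ with $\varphi=U^\omega$ on $K_0\cap\overline{A}$ and $\varphi\leqslant\varepsilon$ on $\overline{A}\setminus K_0$, and then estimates the integrals over $K_0$ and $K_0^c$ separately (obtaining $3M\varepsilon$ and $2M\varepsilon$). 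Alternatively, within your scheme one can take $\varphi_\varepsilon:=U^\omega\chi$ with $\chi\in C_0(X)$, $0\leqslant\chi\leqslant1$, $\chi=1$ on $K_\varepsilon\cap\overline{A}$, and ${\rm Supp}(\chi)\cap{\rm Supp}(\omega)=\varnothing$ (possible since these are disjoint compacts); then $\varphi_\varepsilon\in C_0(X)$ because $U^\omega$ is continuous off ${\rm Supp}(\omega)$, and $0\leqslant U^\omega-\varphi_\varepsilon<\varepsilon$ holds on $\overline{A}$, which together with (\ref{mum}) and $\mu_0(X)\leqslant M$ yields your $2\varepsilon M$ estimate and completes the proof. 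With this one correction your argument is sound and essentially coincides with the paper's.
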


\begin{proof}
Since $\|\mu_s\|\to\|\mu_0\|$ as $s$ ranges over $S$, we only need to show that
\begin{equation}\label{pr2'}
I(\mu_0,\omega)=\lim_{s\in S}\,I(\mu_s,\omega).
\end{equation}
To this end, we first note that, because of the perfectness of the kernel, $\mu_s\to\mu_0$ also vaguely, and hence, due to (\ref{mum}), \begin{equation}\label{mum'}\mu_0(X)\leqslant M,\end{equation}
the mapping $\nu\mapsto\nu(X)$ being vaguely l.s.c.\ on $\mathfrak M^+$ \cite[Section~IV.1, Proposition~4]{B2}.

By virtue of (\ref{liminfty}), for any given $\varepsilon>0$, there is a compact set $K_0\subset X$ such that
\[U^\omega(x)<\varepsilon\quad\text{for all $x\not\in K_0$}.\]  On account of (\ref{mum}) and (\ref{mum'}), we therefore get\footnote{In (\ref{SSS'}) and (\ref{SSS''}), we have utilized \cite[Section~IV.4]{B2} (see Proposition~2 as well as Corollary~2 to Theorem~1).\label{foot}}
\begin{equation}\label{SSS'}
\biggl|\int U^\omega(x)1_{K_0^c}(x)\,d(\mu_s-\mu_0)(x)\biggr|<2M\varepsilon\quad\text{for all $s$}.
\end{equation}

The above $K_0$ can certainly be chosen so that
\[K_0\cap\overline{A}\ne\varnothing.\]
As $U^\omega$ is continuous on $\overline{A}$, while any compact subspace of $X$ is normal \cite[Section~IX.4, Proposition~1]{B3}, applying the Tietze-Urysohn extension
theorem \cite[Theorem~0.2.13]{E2} shows that there exists positive $\varphi\in C_0(X)$ such that
\[\varphi(x)=U^\omega(x)\quad\text{if $x\in K_0\cap\overline{A}$},\qquad
\varphi(x)\leqslant\varepsilon\quad\text{if $x\in\overline{A}\setminus K_0$},\]
which implies, in turn, that for all $s$ large enough,
\begin{align}&\biggl|\int U^\omega(x)1_{K_0}(x)\,d(\mu_s-\mu_0)(x)\biggr|=\biggl|\int \bigl(\varphi-\varphi|_{K^c_0}\bigr)\,d(\mu_s-\mu_0)\biggr|\notag\\
{}&\leqslant\biggl|\int\varphi\,d(\mu_s-\mu_0)\biggr|+\int \varphi|_{K^c_0}\,d(\mu_s+\mu_0)<3M\varepsilon.\label{SSS''}\end{align}
(Here we have used (\ref{mum}), (\ref{mum'}), and the fact that $\mu_s\to\mu_0$ vaguely; cf.\ also footnote~\ref{foot}.)
Combining (\ref{SSS'}) and (\ref{SSS''}) gives (\ref{pr2'}), whence the lemma.
\end{proof}

To prove (v), we first verify that the measure $\zeta$, uniquely determined by means of (\ref{oconv}), solves problem (\ref{hatw}). Since, in consequence of (\ref{oconv}) and Lemma~\ref{laux1},
\[I_f(\zeta)=\lim_{K\uparrow A}\,I_f(\omega^K),\]
this will follow once we show that for any given $\mu\in\mathcal E'_H(A)$,
\begin{equation*}
I_f(\mu)\geqslant\lim_{K\uparrow A}\,I_f(\omega^K).\end{equation*}

Choose a sequence $(\mu_j)\subset\mathcal E^+_H(A)$ converging strongly to this $\mu$, and $(K_j)\subset\mathfrak C_A$ with the property $\|\mu_j-\mu_j|_{K_j}\|<j^{-1}$ for all $j\in\mathbb N$;
such $K_j$ do exist by virtue of Lemma~\ref{laux2} applied to $\mu_j$. Then obviously $(\mu_j|_{K_j})\subset\mathcal E^+_H(A)$, and moreover $\mu_j|_{K_j}\to\mu$ strongly in $\mathcal E'_H(A)$. By making use of Lemma~\ref{laux1} we therefore get
\[I_f(\mu)=\lim_{j\to\infty}\,I_f(\mu_j|_{K_j})\geqslant\lim_{j\to\infty}\,I_f(\omega^{K_j})=\lim_{K\uparrow A}\,I_f(\omega^K),\]
the inequality being clear from (\ref{GaussK}) applied to $K_j$.

It has thus been shown that $\zeta$ solves, indeed, problem (\ref{hatw}). Such a solution is unique, which can be easily verified by means of standard arguments based on the convexity of the class $\mathcal E'_H(A)$, the energy principle, and a pre-Hil\-bert structure on the space $\mathcal E$. Since, according to (\ref{zeta}), $\zeta=\omega^A$, the proof of (v) is complete.

\section{Some further properties of balayage}\label{sec-further}

As before, a locally compact space $X$ and a set $A\subset X$ are arbitrary, whereas a kernel $\kappa$ and a measure $\omega\in\mathfrak M^+$ satisfy the permanent requirements (a)--(c).

\begin{proposition}\label{bal-pot} We have
\begin{align}\label{in-pot}
&U^{\omega^A}\leqslant U^\omega\quad\text{on all of $X$},\\
&I(\omega^A)=I(\omega^A,\omega)\leqslant I(\omega),\label{in-en}\\
&\omega^A(X)\leqslant h\omega(X),\label{emm'}\end{align}
where $h$ is the constant appearing in the $h$-Ugaheri maximum principle.
\end{proposition}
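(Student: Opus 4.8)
The plan is to treat the three assertions in the order (\ref{emm'}), (\ref{in-pot}), (\ref{in-en}), since the mass bound costs nothing, the pointwise inequality feeds the energy inequality, and the energy identity rests on a separate idea. The bound (\ref{emm'}) is immediate from $\omega^A=\zeta\in\mathcal E'_H(A)$ together with the inclusion (\ref{supm}): as $\mathcal E'_H(A)\subset\mathcal E^+_H(\overline{A})$, every member has total mass at most $H=h\omega(X)$ (see (\ref{H})), so in particular $\omega^A(X)\leqslant h\omega(X)$.

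To establish (\ref{in-pot}) I would pass through the compact exhaustion. Fix $K\in\mathfrak C_A$. By (\ref{balK}) we have $U^{\omega^K}=U^\omega$ n.e.\ on $K$; the disagreement set $N:=\{x\in X:U^{\omega^K}(x)\ne U^\omega(x)\}$ is Borel, hence universally measurable, being built from the two lower semicontinuous potentials $U^{\omega^K}$ and $U^\omega$, and $\cp_*(K\cap N)=0$. Applying Lemma~\ref{l1} with $\mu:=\omega^K\in\mathcal E^+(K)$ and $E:=K$ (the compact set $K$ being trivially $\sigma$-compact) yields $(\omega^K)^*(K\cap N)=0$, so that $U^{\omega^K}=U^\omega$ holds $\omega^K$-a.e. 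Since $\omega^K\in\mathcal E^+$ while $\omega\in\mathfrak M^+$, the domination principle (assumption (a)) upgrades this to $U^{\omega^K}\leqslant U^\omega$ on all of $X$. Letting $K\uparrow A$ and invoking the monotone pointwise convergence (\ref{c3}), that is, $U^{\omega^K}\uparrow U^{\omega^A}$, I conclude $U^{\omega^A}=\sup_{K\in\mathfrak C_A}U^{\omega^K}\leqslant U^\omega$ throughout $X$, which is (\ref{in-pot}).

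Given (\ref{in-pot}), the inequality in (\ref{in-en}) is painless: $I(\omega^A,\omega)=\int U^{\omega^A}\,d\omega\leqslant\int U^\omega\,d\omega=I(\omega)$ (trivially so when $I(\omega)=+\infty$), using $\omega\geqslant0$. The identity $I(\omega^A)=I(\omega^A,\omega)$ is the only delicate point, and I expect it to be the main obstacle. The naive route---proving $U^{\omega^A}=U^\omega$ $\omega^A$-a.e.\ and integrating against $\omega^A$---is \emph{not} available here, because $\omega^A$ is concentrated on $\overline{A}$ rather than on $A$, while the n.e.\ identity (\ref{n.e.'}) is asserted only on $A$, leaving no control on $\overline{A}\setminus A$. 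I would bypass this by exploiting the defining symmetry relation (\ref{sym}). Since $\omega^A\in\mathcal E'(A)\subset\mathcal E^+$, I may take $\lambda:=\omega^A$ in (\ref{sym}); and as $\omega^A\in\mathcal E'(A)$, property (\ref{L}) gives $(\omega^A)^A=\omega^A$. Hence
\[I(\omega^A)=I(\omega^A,\omega^A)=I\bigl((\omega^A)^A,\omega\bigr)=I(\omega^A,\omega),\]
and combining this with the inequality above completes (\ref{in-en}) and the proposition.
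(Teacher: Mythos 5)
Your proposal is correct. For (\ref{emm'}) and (\ref{in-pot}) you follow exactly the paper's route: the mass bound is read off from $\omega^A\in\mathcal E'_H(A)$ together with (\ref{supm}), and the potential inequality is obtained, just as in the paper, by combining (\ref{balK}), Lemma~\ref{l1} (your justification that the disagreement set is Borel, hence universally measurable, is a correct filling-in of a detail the paper leaves implicit), the domination principle, and the monotone limit (\ref{c3}). The only genuine divergence is the equality $I(\omega^A)=I(\omega^A,\omega)$ in (\ref{in-en}). The paper stays at the level of the compact approximants: from $U^{\omega^K}=U^\omega$ $\omega^K$-a.e.\ it gets $I(\omega^K)=I(\omega^K,\omega)$ for each $K\in\mathfrak C_A$, and then passes to the limit using the strong convergence (\ref{c1}) (so $\|\omega^K\|\to\|\omega^A\|$) and the monotone convergence of $\int U^{\omega^K}\,d\omega$ to $\int U^{\omega^A}\,d\omega$ via (\ref{c3}) and Bourbaki's theorem on increasing nets of l.s.c.\ functions. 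You instead invoke the defining symmetry relation (\ref{sym}) with $\lambda:=\omega^A$ and the idempotence (\ref{L}), giving $I(\omega^A)=I((\omega^A)^A,\omega)=I(\omega^A,\omega)$ in one line; this is legitimate and non-circular, since Theorem~\ref{th1} has already established that $\omega^A$ satisfies Definition~\ref{def-in}, and $\omega^A\in\mathcal E'_H(A)\subset\mathcal E'(A)$ so (\ref{L}) applies. Your argument is shorter and conceptually cleaner (it isolates the equality as a pure consequence of symmetry plus idempotence), while the paper's limiting argument is self-contained within the exhaustion machinery and reuses only assertion (ii) of Theorem~\ref{th1}. Your side remark that the ``naive'' route --- proving $U^{\omega^A}=U^\omega$ $\omega^A$-a.e.\ directly --- is unavailable because $\omega^A$ is concentrated only on $\overline{A}$ is accurate, and it is precisely the obstacle that the paper's descent to compact $K$ (where $\omega^K\in\mathcal E^+(K)$) circumvents.
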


\begin{proof} For every $K\in\mathfrak C_A$, we have $U^{\omega^K}=U^\omega$ n.e.\ on $K$ (see (\ref{n.e.'}) for $A:=K$), hence $\omega^K$-a.e.\ (Lemma~\ref{l1}), and therefore, by the domination principle,
\begin{equation}\label{in-pK}
U^{\omega^K}\leqslant U^\omega\quad\text{on $X$}.
\end{equation}
Letting here $K\uparrow A$, we get (\ref{in-pot}) by making use of (\ref{c3}). Alternatively, (\ref{in-pK}) results in (\ref{in-pot}) in view of (\ref{c2}) and the principle of descent \cite[Lemma~2.2.1(b)]{F1}.

Integrating (\ref{in-pot}) with respect to $\omega$ gives $I(\omega^A,\omega)\leqslant I(\omega)$. Noting from the preceding paragraph that $I(\omega^K)=I(\omega^K,\omega)$ for all $K\in\mathfrak C_A$, we obtain the equality in (\ref{in-en}) with the aid of the limit relations $\|\omega^K\|\to\|\omega^A\|$ as $K\uparrow A$, cf.\ (\ref{c1}), and
\[\lim_{K\uparrow A}\,\int U^{\omega^K}\,d\omega=\int U^{\omega^A}\,d\omega,\]
the latter being derived from (\ref{c3}) by applying \cite[Section~IV.1, Theorem~1]{B2}.

The remaining inequality (\ref{emm'}) obviously holds, for $\omega^A\in\mathcal E'_H(A)$ (Theorem~\ref{th1}), $H$ being given by (\ref{H}), whereas $\mu(X)\leqslant H$ for all $\mu\in\mathcal E'_H(A)$, cf.\ (\ref{supm}).\end{proof}

\begin{proposition}\label{bal-rest}For arbitrary $Q\subset A$,\footnote{As in Landkof \cite[p.~264]{L}, (\ref{rest}) might be referred to as "{\it the inner balayage with a rest}".}
\begin{equation}\label{rest}
\omega^Q=(\omega^A)^Q,\end{equation}
and therefore
\begin{equation}\label{rest'}U^{\omega^Q}\leqslant U^{\omega^A}\quad\text{on $X$}.\end{equation}
\end{proposition}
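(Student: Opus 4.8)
The plan is to prove (\ref{rest}) by comparing the potentials of $\omega^Q$ and $(\omega^A)^Q$ on $Q$ and then invoking the uniqueness of inner balayage within the strongly complete cone $\mathcal E'(Q)$. First I would check that the permanent assumptions (a)--(c) remain in force with $Q$ in place of $A$: (a) and (b) involve only $\kappa$ and $\omega$, while (c) is inherited because every compact subset of $Q$ is a compact subset of $A$ and $\sup_{x\in Q}U^\omega(x)\le\sup_{x\in A}U^\omega(x)<\infty$ by (\ref{supA}). Hence $\omega^Q$ is well defined, and by Theorem~\ref{th1}(i) together with Remark~\ref{REM} (both read for $Q$) it is the \emph{unique} measure in $\mathcal E'(Q)$ satisfying $U^{\omega^Q}=U^\omega$ n.e.\ on $Q$.

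Next I would treat the right-hand side. Since $\omega^A\in\mathcal E'(A)\subset\mathcal E^+$ has finite energy, $(\omega^A)^Q$ is the inner balayage of Section~\ref{sec-main-inner}; it lies in $\mathcal E'(Q)$ and is characterized by $U^{(\omega^A)^Q}=U^{\omega^A}$ n.e.\ on $Q$, cf.\ (\ref{n.e.}). On the other hand, Theorem~\ref{th1}(i) applied to $A$ gives $U^{\omega^A}=U^\omega$ n.e.\ on $A$, hence, by monotonicity of inner capacity, n.e.\ on the smaller set $Q$. I would then combine these two ``n.e.\ on $Q$'' relations using the countable subadditivity of inner capacity supplied by Lemma~\ref{str} (with $E:=Q$ and the two universally measurable Borel sets on which the l.s.c.\ potentials in question disagree), obtaining $U^{(\omega^A)^Q}=U^\omega$ n.e.\ on $Q$. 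Thus $(\omega^A)^Q\in\mathcal E'(Q)$ enjoys exactly the characterizing potential property of $\omega^Q$, and the uniqueness recorded above forces $(\omega^A)^Q=\omega^Q$, which is (\ref{rest}).

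Finally, (\ref{rest'}) would follow at once from (\ref{rest}): the inner balayage of the finite-energy measure $\omega^A$ does not increase its potential, i.e.\ $U^{(\omega^A)^Q}\le U^{\omega^A}$ on all of $X$. This is the finite-energy instance of the computation opening the proof of Proposition~\ref{bal-pot} (from $U^{(\omega^A)^Q}=U^{\omega^A}$ n.e.\ on $Q$, hence $(\omega^A)^Q$-a.e., the domination principle gives the inequality everywhere). Substituting $\omega^Q=(\omega^A)^Q$ then yields $U^{\omega^Q}\le U^{\omega^A}$ on $X$.

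I expect the only delicate point to be bookkeeping about which notion of balayage is in play: $\omega^Q$ arises from Definition~\ref{def-in} (with $\omega$ possibly of infinite energy), whereas $(\omega^A)^Q$ is the finite-energy balayage of Section~\ref{sec-main-inner}. The comparison is legitimate precisely because the two notions coincide for measures of finite energy and because the characterization ``$U^\nu=U^\omega$ n.e.\ on $Q$ for $\nu\in\mathcal E'(Q)$'' from Remark~\ref{REM} is insensitive to the mass bound defining $\mathcal E'_H(Q)$. In particular, one never has to compare the constants $h\omega(X)$ and $h\,\omega^A(X)$, so no difficulty arises from the factor $h$ in the $h$-Ugaheri maximum principle.
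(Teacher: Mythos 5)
Your proof of the main identity (\ref{rest}) is essentially the paper's own argument: the paper likewise observes that (a)--(c) hold for $Q$ along with $A$, invokes Theorem~\ref{th1}(i) together with Remark~\ref{REM} to characterize $\omega^Q$ uniquely within $\mathcal E'(Q)$ by $U^{\omega^Q}=U^\omega$ n.e.\ on $Q$, characterizes $(\omega^A)^Q$ within $\mathcal E'(Q)$ by $U^{(\omega^A)^Q}=U^{\omega^A}$ n.e.\ on $Q$ via \cite[Theorem~4.3]{Z-arx1} (legitimate because $\omega^A\in\mathcal E^+$), and merges the two n.e.-relations by means of the subadditivity in Lemma~\ref{str}. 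One small omission: the paper first disposes of the degenerate case $\cp_*Q=0$, where $\omega^Q=(\omega^A)^Q=0$; you should add this line, since Theorem~\ref{th1} and Remark~\ref{REM} are formulated under the standing assumption of positive inner capacity.

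The genuine gap is in your derivation of (\ref{rest'}). You pass from ``$U^{(\omega^A)^Q}=U^{\omega^A}$ n.e.\ on $Q$'' to ``$(\omega^A)^Q$-a.e.'' and then apply the domination principle. But $(\omega^A)^Q$ lies in $\mathcal E'(Q)$, the strong closure of $\mathcal E^+(Q)$, and by (\ref{supm}) this only guarantees $(\omega^A)^Q\in\mathcal E^+_{}(\overline{Q})$: the swept measure need not be concentrated on $Q$ itself, as mass may sit on $\overline{Q}\setminus Q$, where the n.e.-equality gives no information. Hence Lemma~\ref{l1} is not applicable, and the implication ``n.e.\ on $Q$ $\Longrightarrow$ a.e.'' is unjustified; note that the computation you cite from the opening of the proof of Proposition~\ref{bal-pot} is performed only for \emph{compact} $K$, precisely where $\omega^K\in\mathcal E^+(K)$ makes Lemma~\ref{l1} available. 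The paper instead obtains (\ref{rest'}) by citing \cite[Eq.~(4.6)]{Z-arx1}, applied to $\mu:=\omega^A\in\mathcal E^+$, or alternatively from the identity $U^{\omega^A}(x)=\sup_{K\in\mathfrak C_A}U^{\omega^K}(x)$ of (\ref{c3}) combined with $\mathfrak C_Q\subset\mathfrak C_A$. Your route can be repaired along the same compact-exhaustion lines: for compact $K\subset Q$ one has $U^{(\omega^A)^K}=U^{\omega^A}$ n.e.\ on $K$, hence $(\omega^A)^K$-a.e.\ by Lemma~\ref{l1}, so $U^{(\omega^A)^K}\leqslant U^{\omega^A}$ on all of $X$ by domination; letting $K\uparrow Q$ and using the analogue of (\ref{c3}) (or vague convergence and the principle of descent) then yields the bound for $(\omega^A)^Q=\omega^Q$.
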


\begin{proof} We can certainly assume that $\cp_*Q>0$, since otherwise $\omega^Q=(\omega^A)^Q=0$.

As (a)--(c) do hold for $Q$ along with $A$, Theorem~\ref{th1}(i) and Remark~\ref{REM} show that $\omega^Q$ does exist, and it is uniquely determined within $\mathcal E'(Q)$ by the equality
\begin{equation*}\label{rest1}U^{\omega^Q}=U^\omega\quad\text{n.e.\ on $Q$}.\end{equation*}
On the other hand, $\omega^A\in\mathcal E^+$ (Theorem~\ref{th1}); therefore, according to \cite[Theorem~4.3]{Z-arx1}, $(\omega^A)^Q$ does exist as well, and it is the only measure in $\mathcal E'(Q)$ having the property
\begin{equation}\label{rest2}U^{(\omega^A)^Q}=U^{\omega^A}=U^\omega\quad\text{n.e.\ on $Q$},\end{equation}
the latter equality in (\ref{rest2}) being derived from (\ref{n.e.'}) by making use of Lemma~\ref{str}. Comparing these two assertions implies (\ref{rest}).

The remaining inequality (\ref{rest'}) then follows at once from \cite[Eq.~(4.6)]{Z-arx1}, applied to $\mu:=\omega^A\in\mathcal E^+$. Alternatively, (\ref{rest'}) can be deduced from the equality
\[U^{\omega^A}(x)=\sup_{K\in\mathfrak C_A}\,U^{\omega^K}(x)\quad\text{for all $x\in X$},\]
cf.\ (\ref{c3}), by noting that $\mathfrak C_Q\subset\mathfrak C_A$.
\end{proof}

In Propositions~\ref{bal-tot-m}, \ref{th-tot} and Lemma~\ref{l-eq}, we assume that Frostman's maximum principle is fulfilled (i.e.\ $h=1$, cf.\ (a)).

\begin{proposition}\label{bal-tot-m} If moreover $X$ is $\sigma$-compact, then
$\omega^A$ is of minimum total mass in the class $\Gamma_{A,\omega}$; that is,
\begin{equation}\label{eq-t-m}\omega^A(X)=\min_{\mu\in\Gamma_{A,\omega}}\,\mu(X).\end{equation}
\end{proposition}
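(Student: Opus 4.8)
The plan is to reduce the statement to the \emph{principle of positivity of mass} (Theorem~\ref{pr-pos}), whose two hypotheses---$X$ being $\sigma$-compact and Frostman's maximum principle ($h=1$)---are exactly the standing assumptions imposed in this part of the paper. First I would record that $\omega^A$ itself belongs to $\Gamma_{A,\omega}$. Indeed, Theorem~\ref{th1}(i) (see (\ref{n.e.'}), or equivalently (\ref{balA})) gives $U^{\omega^A}=U^\omega$ n.e.\ on $A$, so in particular $U^{\omega^A}\geqslant U^\omega$ n.e.\ on $A$, which is precisely the defining condition (\ref{G}) for membership in $\Gamma_{A,\omega}$. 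Hence the minimum on the right of (\ref{eq-t-m}), once shown to be a lower bound attained by $\omega^A$, is legitimately a minimum; it remains only to prove $\omega^A(X)\leqslant\nu(X)$ for every $\nu\in\Gamma_{A,\omega}$.

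The decisive ingredient is that Theorem~\ref{th1}(iii) already supplies the comparison of potentials on \emph{all} of $X$: by (\ref{minpot}) we have $U^{\omega^A}=\min_{\nu\in\Gamma_{A,\omega}}U^\nu$ on $X$, so that $U^{\omega^A}\leqslant U^\nu$ everywhere on $X$ for each fixed $\nu\in\Gamma_{A,\omega}$. Given this, I would apply Theorem~\ref{pr-pos} with $\mu:=\omega^A$ and this $\nu$, both of which lie in $\mathcal E^+$ (recall $\omega^A\in\mathcal E'_H(A)\subset\mathcal E^+$ by Theorem~\ref{th1}). Since $U^{\omega^A}\leqslant U^\nu$ holds everywhere, it holds in particular n.e.\ on $X$, and the principle of positivity of mass yields $\omega^A(X)\leqslant\nu(X)$. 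Passing to the infimum over $\nu\in\Gamma_{A,\omega}$ and invoking $\omega^A\in\Gamma_{A,\omega}$ from the first step then gives (\ref{eq-t-m}).

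I do not expect a genuine obstacle here, since the argument merely assembles the earlier results in the correct order; the one point demanding care is that the comparison of potentials feeding into Theorem~\ref{pr-pos} must hold n.e.\ on the whole space $X$, not merely on $A$. This is exactly why I draw the inequality from the everywhere identity (\ref{minpot}) of Theorem~\ref{th1}(iii) rather than from the n.e.\ equality on $A$ alone: the latter would only control the potentials on $A$, which is insufficient to invoke the principle of positivity of mass.
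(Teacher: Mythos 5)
Your proposal is correct and coincides with the paper's own proof: the paper likewise notes $\omega^A\in\Gamma_{A,\omega}$ (via Theorem~\ref{th1}), obtains $U^{\omega^A}\leqslant U^\mu$ on all of $X$ from Theorem~\ref{th1}(iii), and concludes by applying the principle of positivity of mass (Theorem~\ref{pr-pos}). Your closing remark---that the everywhere comparison from (\ref{minpot}), rather than the n.e.\ equality on $A$ alone, is what legitimizes invoking Theorem~\ref{pr-pos}---is exactly the right point of care.
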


\begin{proof} Since $\omega^A\in\Gamma_{A,\omega}$ (Theorem~\ref{th1}), we are reduced to showing that
\begin{equation*}\omega^A(X)\leqslant\mu(X)\quad\text{for any $\mu\in\Gamma_{A,\omega}$.}\end{equation*}
But for such $\mu$, we have $U^{\omega^A}\leqslant U^\mu$ on all of $X$ (Theorem~\ref{th1}(iii)),
and the claimed inequality then follows at once by applying Theorem~\ref{pr-pos}.
\end{proof}

\begin{remark}\label{t-m-nonun} The extremal property (\ref{eq-t-m}) cannot, however, serve as an alternative characterization of the inner balayage $\omega^A$, for it does not determine $\omega^A$ uniquely. Indeed, consider the $\alpha$-Riesz kernel $|x-y|^{\alpha-n}$ of order $\alpha\leqslant2$, $\alpha<n$, on $\mathbb R^n$, $n\geqslant2$, a proper closed subset $A$ of $\mathbb R^n$ that is {\it not $\alpha$-thin at infinity},\footnote{By Kurokawa and Mizuta \cite{KM}, a set $Q\subset\mathbb R^n$ is said to be {\it inner $\alpha$-thin at infinity} if
\begin{equation*}
 \sum_{j\in\mathbb N}\,\frac{\cp_*(Q_j)}{q^{j(n-\alpha)}}<\infty,
 \end{equation*}
where $q\in(1,\infty)$ and $Q_j:=Q\cap\{x\in\mathbb R^n:\ q^j\leqslant|x|<q^{j+1}\}$. See also \cite[Section~2]{Z-bal2}.} and let $\omega\ne0$ be a positive measure of {\it finite} energy with ${\rm Supp}(\omega)\subset A^c$. Then
\begin{equation}\label{eq-t-m1}\omega^A\ne\omega\quad\text{and}\quad\omega^A(\mathbb R^n)=\omega(\mathbb R^n),\end{equation}
where the former is obvious because ${\rm Supp}(\omega^A)\subset A$ while the latter holds true by virtue of \cite[Corollary~5.3]{Z-bal2}.
Noting that $\omega,\omega^A\in\Gamma_{A,\omega}$, we conclude from (\ref{eq-t-m}) and (\ref{eq-t-m1}) that there are actually infinitely many measures of minimum total mass in the class $\Gamma_{A,\omega}$, for so is any one of the form $a\omega+b\omega^A$, where $a,b\in[0,1]$ and $a+b=1$.
\end{remark}

Before providing a formula for evaluation of the total mass $\omega^A(X)$ of the inner swept measure $\omega^A$ (Proposition~\ref{th-tot}), we first analyze the continuity of the inner equilibrium potential $U^{\gamma_A}$ under the exhaustion of $A$ by compact subsets $K$.

\begin{lemma}\label{l-eq}For any $A\subset X$ with $\cp_*A<\infty$,
\begin{equation}\label{later-eq}U^{\gamma_K}\uparrow U^{\gamma_A}\quad\text{pointwise on $X$ as $K\uparrow A$},\end{equation}
where $\gamma_K$, resp.\ $\gamma_A$, denotes the inner equilibrium measure of $K$, resp.\ of $A$.\footnote{See Fuglede \cite[Section~4.1]{F1} (in particular, Theorem~4.1 and the remarks attached to it). Here we have utilized the fact that, due to the energy principle,
$\cp K<\infty$ for any compact $K\subset X$.}\end{lemma}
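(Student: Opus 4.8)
The plan is to imitate the argument that established the limit relation (\ref{potconv}) in the proof of Theorem~\ref{th1}, now with the inner equilibrium measures $\gamma_K$, $K\in\mathfrak C_A$, playing the role there assigned to the swept measures $\omega^K$. Recall from Fuglede \cite[Section~4.1]{F1} that, since $\cp_*A<\infty$ forces $\cp K<\infty$ for every $K\in\mathfrak C_A$, each $\gamma_K\in\mathcal E^+(K)$ is uniquely determined by $U^{\gamma_K}=1$ n.e.\ on $K$, that Frostman's maximum principle (in force here, $h=1$) yields $U^{\gamma_K}\leqslant1$ on all of $X$, and that $\|\gamma_K\|^2=\gamma_K(X)=\cp K$.

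First I would show that the net $(U^{\gamma_K})_{K\in\mathfrak C_A}$ increases pointwise on $X$. For $K'\geqslant K$ one has $U^{\gamma_{K'}}=1=U^{\gamma_K}$ n.e.\ on $K$, hence $\gamma_K$-a.e.\ by Lemma~\ref{l1}, so that $U^{\gamma_K}\leqslant U^{\gamma_{K'}}$ on all of $X$ by the domination principle. The same computation gives $\langle\gamma_K,\gamma_{K'}\rangle=\int U^{\gamma_{K'}}\,d\gamma_K=\gamma_K(X)=\|\gamma_K\|^2$, whence
\[\|\gamma_{K'}-\gamma_K\|^2=\|\gamma_{K'}\|^2-\|\gamma_K\|^2=\cp K'-\cp K.\]
Because the increasing net $\bigl(\cp K\bigr)_{K\in\mathfrak C_A}$ is bounded above by $\cp_*A<\infty$, it is Cauchy in $\mathbb R$, and therefore $(\gamma_K)$ is strong Cauchy. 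By the strong completeness of $\mathcal E'(A)$ it converges both strongly and vaguely to a unique $\gamma_0\in\mathcal E'(A)$.

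Next I would identify the limit of the potentials and then the limit measure. Combining the monotonicity just obtained with Lemma~\ref{l2} (applied along a strongly convergent subsequence), the domination principle, and the principle of descent \cite[Lemma~2.2.1(b)]{F1}, precisely as in the passage leading to (\ref{potconv}), yields $U^{\gamma_K}\uparrow U^{\gamma_0}$ pointwise on all of $X$ as $K\uparrow A$. It remains to prove $\gamma_0=\gamma_A$. Since each $U^{\gamma_K}\leqslant1$ on $X$, passing to the limit gives $U^{\gamma_0}\leqslant1$ on $X$. Conversely, fixing $K\in\mathfrak C_A$ and using $U^{\gamma_0}\geqslant U^{\gamma_K}=1$ n.e.\ on $K$, we see that $\{x\in K:\ U^{\gamma_0}(x)<1\}$ is of capacity zero; as this holds for every compact $K\subset A$, the set $\{x\in A:\ U^{\gamma_0}(x)<1\}$ is of inner capacity zero, so that $U^{\gamma_0}=1$ n.e.\ on $A$. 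By the characterization of the inner equilibrium measure as the unique element of $\mathcal E'(A)$ with this property \cite[Section~4.1]{F1} (cf.\ the analogue of Remark~\ref{REM} for equilibrium measures), this forces $\gamma_0=\gamma_A$, and (\ref{later-eq}) follows.

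The main obstacle I anticipate is the final identification $\gamma_0=\gamma_A$: one must pass from ``$U^{\gamma_0}=1$ n.e.\ on each compact $K\subset A$'' to ``$U^{\gamma_0}=1$ n.e.\ on $A$'' for an arbitrary, possibly non-$\sigma$-compact set $A$, which hinges on inner capacity being tested against compact subsets, and then invoke the uniqueness within $\mathcal E'(A)$ of a measure with prescribed potential n.e.\ on $A$. The remaining steps, namely the monotonicity, the strong Cauchy property, and the pointwise ascent of the potentials, are routine adaptations of the corresponding arguments already carried out in the proof of Theorem~\ref{th1}.
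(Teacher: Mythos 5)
Your proof is correct, and its skeleton---monotonicity of the potentials via Lemma~\ref{l1} and the domination principle, a strong Cauchy property driven by $\cp K\uparrow\cp_*A<\infty$, pointwise ascent of the potentials via Lemma~\ref{l2}, and the reverse inequality via the principle of descent---coincides with the paper's. The one genuine divergence is how the limit is identified. The paper simply notes that $\cp K\uparrow\cp_*A$ and invokes the argument of \cite[Proof of Theorem~4.1]{F1}, by which the strong (hence vague) limit of the net $(\gamma_K)$ \emph{is} $\gamma_A$ essentially by Fuglede's construction of the inner equilibrium measure; the bound $\lim_{K\uparrow A}U^{\gamma_K}\leqslant U^{\gamma_A}$ is then read off directly from $1=U^{\gamma_K}=U^{\gamma_{K'}}=U^{\gamma_A}$ n.e.\ on $K$ together with Lemma~\ref{l1} and the domination principle, and the opposite inequality from the descent principle, so no uniqueness statement is needed at all. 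You instead re-derive Fuglede's Cauchy estimate explicitly (your computation $\|\gamma_{K'}-\gamma_K\|^2=\cp K'-\cp K$ is correct), obtain an abstract limit $\gamma_0\in\mathcal E'(A)$, and pin it down through the property $U^{\gamma_0}=1$ n.e.\ on $A$---your passage from compact subsets to all of $A$ is exactly right, since inner capacity is tested on compacts---plus uniqueness within $\mathcal E'(A)$ of a measure with prescribed potential n.e.\ on $A$; the latter is legitimate here via (\ref{L}) and \cite[Theorem~4.3]{Z-arx1} combined with Lemma~\ref{str}, i.e.\ the mechanism behind Remark~\ref{REM}. Two small observations: your route consumes Frostman's maximum principle (to get $U^{\gamma_K}\leqslant1$ on all of $X$), which is indeed in force in this part of the paper but is not actually used in the paper's own proof, which needs only domination; and your identification step presupposes that $\gamma_A\in\mathcal E'(A)$ with $U^{\gamma_A}=1$ n.e.\ on $A$---since in \cite[Section~4.1]{F1} $\gamma_A$ is produced precisely as the strong limit of the $\gamma_K$, that membership is automatic, so your uniqueness detour, while valid and pleasantly self-contained, partly re-proves what Fuglede's definition already provides.
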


\begin{proof}Since $\cp K\uparrow\cp_*A$ as $K\uparrow A$ \cite[p.~163]{F1}, we conclude in much the same way as in \cite[Proof of Theorem~4.1]{F1} that
\begin{equation}\label{conv-eq-m}\gamma_K\to\gamma_A\quad\text{strongly (hence, vaguely) in $\mathcal E^+$ as $K\uparrow A$}.\end{equation}
But for any $K,K'\in\mathfrak C_A$ such that $K\leqslant K'$,
\[1=U^{\gamma_K}=U^{\gamma_{K'}}=U^{\gamma_A}\quad\text{n.e.\ on $K$}\]
(cf.\ \cite[p.~175, Remarks]{F1}), hence $\gamma_K$-a.e.\ (Lemma~\ref{l1}). Therefore, by the domination principle, the net $(U^{\gamma_K})_{K\in\mathfrak C_A}$ increases pointwise on $X$ to some function that does not exceed  $U^{\gamma_A}$. To establish (\ref{later-eq}), we thus only need to verify the inequality
\[U^{\gamma_A}\leqslant\lim_{K\uparrow A}\,U^{\gamma_K}\quad\text{on all of $X$},\]
which is obvious from (\ref{conv-eq-m}) by the principle of descent \cite[Lemma~2.2.1(b)]{F1}.
\end{proof}

\begin{proposition}\label{th-tot}For any $A\subset X$ with $\cp_*A<\infty$,\footnote{For the $\alpha$-Riesz kernel of order $\alpha\in(0,2]$, $\alpha<n$, on $\mathbb R^n$, $n\geqslant2$, Proposition~\ref{th-tot} remains valid for any $\omega\in\mathfrak M^+$ and any $A\subset\mathbb R^n$ that is inner $\alpha$-thin at infinity (even if $\cp_*A=\infty$), which is seen by combining Theorems~2.2 and 5.1 from \cite{Z-bal2}.}
\begin{equation}\label{eq-mass1}\omega^A(X)=\int U^{\gamma_A}\,d\omega.\end{equation}
\end{proposition}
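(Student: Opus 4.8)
The plan is to reduce the identity (\ref{eq-mass1}) to the assertion
\[
\omega^A(X)=\int U^{\gamma_A}\,d\omega^A,
\]
and then to evaluate the right-hand side by the symmetry relation defining $\omega^A$. For the latter step, observe that by (\ref{conv-eq-m}) the inner equilibrium measure $\gamma_A$ is a strong limit of the $\gamma_K$, $K\in\mathfrak C_A$, hence $\gamma_A\in\mathcal E'(A)$ and, in particular, $\gamma_A\in\mathcal E^+$ (here $\cp_*A<\infty$ is used, giving $I(\gamma_A)=\cp_*A<\infty$). Thus $(\gamma_A)^A=\gamma_A$ by (\ref{L}), so applying the symmetry relation (\ref{sym}) with $\lambda:=\gamma_A$ gives
\[
\int U^{\gamma_A}\,d\omega^A=I(\omega^A,\gamma_A)=I\bigl((\gamma_A)^A,\omega\bigr)=I(\gamma_A,\omega)=\int U^{\gamma_A}\,d\omega,
\]
the outer equalities holding by Tonelli's theorem since $\kappa\geqslant0$. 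Granting the displayed reduction, (\ref{eq-mass1}) follows at once.

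It therefore remains to establish $\omega^A(X)=\int U^{\gamma_A}\,d\omega^A$, which I would obtain as a two-sided estimate. Since Frostman's maximum principle is in force, each $U^{\gamma_K}\leqslant1$ on $X$, whence, by Lemma~\ref{l-eq}, also $U^{\gamma_A}\leqslant1$ on all of $X$; integrating against $\omega^A\geqslant0$ yields the inequality $\omega^A(X)\geqslant\int U^{\gamma_A}\,d\omega^A$. For the reverse inequality I would pass through the approximating balayages $\omega^K$, $K\uparrow A$. For each $K$ we have $U^{\gamma_K}=1$ n.e.\ on $K$, hence $\omega^K$-a.e.\ by Lemma~\ref{l1} (recall $\omega^K\in\mathcal E^+(K)$ and $K$ is compact); combining this with $U^{\gamma_K}\leqslant U^{\gamma_A}\leqslant1$ forces $U^{\gamma_A}=1$ $\omega^K$-a.e., so that
\[
\omega^K(X)=\int U^{\gamma_A}\,d\omega^K=I(\gamma_A,\omega^K).
\]
As $\omega^K\to\omega^A$ strongly in $\mathcal E^+$ (Theorem~\ref{th1}(ii), cf.\ (\ref{c1})) while $\gamma_A\in\mathcal E$ is fixed, the Cauchy--Schwarz inequality in the pre-Hilbert space $\mathcal E$ gives $I(\gamma_A,\omega^K)\to I(\gamma_A,\omega^A)=\int U^{\gamma_A}\,d\omega^A$. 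On the other hand $\omega^K\to\omega^A$ vaguely, and the total-mass functional $\nu\mapsto\nu(X)$ is vaguely lower semicontinuous on $\mathfrak M^+$; hence $\omega^A(X)\leqslant\liminf_{K\uparrow A}\omega^K(X)=\int U^{\gamma_A}\,d\omega^A$. The two inequalities together give the required identity.

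The delicate point is precisely this reverse inequality. The naive guess ``$U^{\gamma_A}=1$ $\omega^A$-a.e.'' need not hold verbatim, because $\omega^A$ lies in $\mathcal E'(A)\subset\mathcal E^+(\overline A)$ and may charge $\overline A\setminus A$, where $U^{\gamma_A}$ can be strictly less than $1$; so one cannot simply write $\omega^A(X)=\int U^{\gamma_A}\,d\omega^A$ by evaluating the integrand to be $1$. The squeeze above sidesteps this by computing the mass on the compact exhaustion, where $U^{\gamma_A}=1$ does hold $\omega^K$-a.e., and then transferring the equality to the limit through the interplay of strong convergence (which controls $\int U^{\gamma_A}\,d\omega^K$) and vague lower semicontinuity of total mass (which controls $\omega^A(X)$). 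I note that this route avoids any appeal to the principle of positivity of mass, and hence does not require $X$ to be $\sigma$-compact, in contrast with the argument for Proposition~\ref{bal-tot-m}.
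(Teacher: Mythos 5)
Your proof is correct, but it takes a genuinely different route from the paper's. The paper first establishes $\omega^A(X)=\lim_{K\uparrow A}\omega^K(X)$ by squeezing: the upper bound $\omega^K(X)=(\omega^A)^K(X)\leqslant\omega^A(X)$ comes from Proposition~\ref{bal-rest} together with the mass inequality of \cite[Eq.~(7.2)]{Z-arx1} applied to $\omega^A\in\mathcal E^+$, and the lower bound from (\ref{c2}) and vague lower semicontinuity of total mass; it then computes $\omega^K(X)=\int U^{\gamma_K}\,d\omega$ by a Lebesgue--Fubini chain ($\int U^{\gamma_K}\,d\omega^K=\int U^{\omega^K}\,d\gamma_K=\int U^\omega\,d\gamma_K$, using $U^{\gamma_K}=1$ n.e.\ on $K$ hence $\omega^K$-a.e., and $U^{\omega^K}=U^\omega$ n.e.\ on $K$ hence $\gamma_K$-a.e.), and finally passes to the limit on the $\omega$-side via Lemma~\ref{l-eq} and monotone convergence. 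You instead work on the $\omega^A$-side: you exploit the fact, not used by the paper here, that (\ref{conv-eq-m}) places $\gamma_A$ in $\mathcal E'(A)$, so that $(\gamma_A)^A=\gamma_A$ by (\ref{L}) and the defining symmetry relation (\ref{sym}) with $\lambda:=\gamma_A$ immediately yields $\int U^{\gamma_A}\,d\omega^A=\int U^{\gamma_A}\,d\omega$; the remaining identity $\omega^A(X)=\int U^{\gamma_A}\,d\omega^A$ you obtain by a squeeze in which Frostman's bound $U^{\gamma_A}\leqslant1$ (via Lemma~\ref{l-eq}) gives one inequality, while $\omega^K(X)=I(\gamma_A,\omega^K)$ (from $U^{\gamma_A}=1$ $\omega^K$-a.e.), strong convergence (\ref{c1}) with Cauchy--Schwarz, and vague lower semicontinuity give the other. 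What each approach buys: yours is self-contained within the present paper --- it replaces the appeal to Proposition~\ref{bal-rest} and to \cite[Eq.~(7.2)]{Z-arx1} by a direct use of Frostman's principle and of Definition~\ref{def-in}, and it makes visible, as you rightly note, that the integral identity must be proved through the compact exhaustion because $\omega^A$ may charge $\overline A\setminus A$ where $U^{\gamma_A}<1$; the paper's version, by contrast, never invokes the symmetry relation at the level of $A$ and keeps all limit passages monotone, which is marginally more economical once the machinery of \cite{Z-arx1} is granted. Your concluding remark is accurate as a comparison with Proposition~\ref{bal-tot-m}: neither your argument nor the paper's proof of Proposition~\ref{th-tot} uses the principle of positivity of mass or $\sigma$-compactness of $X$.
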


\begin{proof} Relations (\ref{rest}) and \cite[Eq.~(7.2)]{Z-arx1}, the latter being applied to $\mu:=\omega^A\in\mathcal E^+$, give
\[\omega^K(X)=(\omega^A)^K(X)\leqslant\omega^A(X)\quad\text{for every $K\in\mathfrak C_A$},\]
hence
\begin{equation}
\limsup_{K\uparrow A}\,\omega^K(X)\leqslant\omega^A(X)\leqslant\liminf_{K\uparrow A}\,\omega^K(X),\label{line}\end{equation}
where the latter inequality is obtained from (\ref{c2}) by use of the vague lower semicontinuity of the mapping $\nu\mapsto\nu(X)$ on $\mathfrak M^+$ \cite[Section~IV.1, Proposition~4]{B2}.

But for compact $K$, by Lebesgue--Fubini's theorem \cite[Section~V.8, Theorem~1]{B2},
\begin{equation*}\omega^K(X)=\int1\,d\omega^K=\int U^{\gamma_K}\,d\omega^K=\int U^{\omega^K}\,d\gamma_K=\int U^\omega\,d\gamma_K=\int U^{\gamma_K}\,d\omega,\end{equation*}
for $U^{\gamma_K}=1$ (resp.\ $U^{\omega^K}=U^\omega$) holds true n.e.\ on $K$, hence $\omega^K$-a.e.\ (resp.\ $\gamma_K$-a.e.).
By substituting this into (\ref{line}) we therefore obtain
\[\omega^A(X)=\lim_{K\uparrow A}\,\omega^K(X)=\lim_{K\uparrow A}\,\int U^{\gamma_K}\,d\omega.\]
Since the net $(U^{\gamma_K})_{K\in\mathfrak C_A}$ increases pointwise on $X$ to $U^{\gamma_A}$ (Lemma~\ref{l-eq}), we arrive at (\ref{eq-mass1}) by applying \cite[Section~IV.1, Theorem~1]{B2} to the integral on the right.
\end{proof}

\begin{proposition}\label{pr-outer} Assume that the space $X$ is $\sigma$-compact and perfectly normal, while the sets under consideration are Borel. Then Propositions~{\rm\ref{bal-pot}--\ref{bal-tot-m}} and {\rm\ref{th-tot}} remain valid with the inner concepts replaced throughout by the outer ones.
\end{proposition}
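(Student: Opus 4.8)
The plan is to reduce every outer assertion to its already-established inner counterpart by exploiting two facts: the capacitability of Borel sets furnished by Theorem~\ref{l-top}, and the coincidence $\omega^{*A}=\omega^A$ recorded in~(\ref{eq-bal-oUU}). Under the standing hypotheses ($X$ being $\sigma$-compact and perfectly normal, and all sets under consideration being Borel), Theorem~\ref{l-top} gives $\cp_*Q=\cp^*Q$ for each such set $Q$. In particular, every exceptional set $N$ arising in Propositions~\ref{bal-pot}--\ref{th-tot} is Borel, whence $\cp_*N=0$ forces $\cp^*N=0$; thus the phrases ``n.e.\ on $A$'' and ``q.e.\ on $A$'' become interchangeable, the class $\Gamma_{A,\omega}$ defined via~(\ref{G}) coincides with its outer analogue defined through ``q.e.'', and the inner equilibrium measure $\gamma_A$ agrees with the outer equilibrium measure $\gamma^*_A$.

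With these identifications in hand, I would proceed proposition by proposition. For the outer version of Proposition~\ref{bal-pot}, the three relations (\ref{in-pot})--(\ref{emm'}) transcribe verbatim upon substituting $\omega^{*A}=\omega^A$. For Proposition~\ref{bal-tot-m}, the minimality~(\ref{eq-t-m}) carries over because $\Gamma_{A,\omega}$ is unchanged and $\omega^{*A}=\omega^A$; here one retains the assumption that Frostman's principle holds, so that Theorem~\ref{pr-pos} remains applicable. Likewise, the total-mass formula~(\ref{eq-mass1}) of Proposition~\ref{th-tot} becomes $\omega^{*A}(X)=\int U^{\gamma^*_A}\,d\omega$, which is exactly~(\ref{eq-mass1}) once we use $\omega^{*A}=\omega^A$ together with $\gamma^*_A=\gamma_A$.

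The one place requiring a little more care is the outer analogue of the balayage-with-a-rest~(\ref{rest}). For Borel $Q\subset A$, I would combine the inner identity $\omega^Q=(\omega^A)^Q$ of Proposition~\ref{bal-rest} with the finite-energy identity~(\ref{BB}) applied to $\mu:=\omega^A\in\mathcal E^+$ and the set $Q$, giving $(\omega^A)^{*Q}=(\omega^A)^Q$, and with Theorem~\ref{th1'} applied to $Q$, giving $\omega^{*Q}=\omega^Q$. Chaining these yields $\omega^{*Q}=(\omega^{*A})^{*Q}$, the desired outer restriction formula, from which~(\ref{rest'}) follows as before. I expect the main (though modest) obstacle to be precisely this bookkeeping: verifying that every subset entering the restriction statement is Borel, hence capacitable, so that the interchange of inner and outer notions is legitimate at each step.
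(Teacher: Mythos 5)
Your proposal is correct and takes essentially the same route as the paper, whose proof is a one-line citation of exactly the ingredients you spell out: Theorem~\ref{th1'} (i.e.\ $\omega^{*A}=\omega^A$), Theorem~\ref{l-top} (capacitability of Borel sets, making ``n.e.'' and ``q.e.'' interchangeable for the Borel exceptional sets and for $\Gamma_{A,\omega}$), \cite[Theorem~9.4]{Z-arx1} (i.e.\ (\ref{BB}), which is precisely your step $(\omega^A)^{*Q}=(\omega^A)^Q$ in the restriction formula), and \cite[Theorem~4.3]{F1} (your identification $\gamma^*_A=\gamma_A$). Your write-up merely expands the paper's terse argument, with no gaps.
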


\begin{proof}
  This can be easily seen by applying Theorems~\ref{th1'}, \ref{l-top}, \cite[Theorem~9.4]{Z-arx1}, and \cite[Theorem~4.3]{F1}.
\end{proof}

\begin{theorem}\label{pr-in-ou} If $X$ is second-countable, then for arbitrary $A\subset X$, there exists a $K_\sigma$-set $A_0\subset A$ such that\footnote{Compare with \cite[Proposition~VI.2.2]{BH} by  Bliedtner and Hansen, showing that the outer balayage to any $A\subset X$ can always be reduced to the balayage to a $G_\delta$-set $A'\supset A$, the balayage in \cite{BH} being treating in the setting of balayage spaces. Recall that $Q\subset X$ is said to be a $K_\sigma$-{\it set}, resp.\ a $G_\delta$-{\it set}, if it is a countable union of compact subsets, resp.\ a countable intersection of open sets.} \begin{equation}\label{OO}\omega^A=\omega^{A_0}=\omega^{*A_0}.\end{equation}
\end{theorem}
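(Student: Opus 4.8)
The plan is to produce $A_0$ by reducing the uncountable exhaustion of $A$ by its compact subsets to a single increasing \emph{sequence} of compact sets, and then to identify $\omega^A$ with $\omega^{A_0}$ through the uniqueness clause of Theorem~\ref{th1}(i). First I would record the topological input: a second-countable locally compact space is metrizable, hence $\sigma$-compact and perfectly normal (cf.\ the facts cited in Section~\ref{sec-main2}), so that Theorem~\ref{th1'} becomes available for Borel sets. It is worth noting at once that the strong topology on $\mathcal E$, being defined by the energy norm $\|\cdot\|$, is metrizable; this, rather than second-countability of $X$, is what will power the sequence extraction.

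Next I would extract the sequence. By Theorem~\ref{th1}(ii), $\omega^K\to\omega^A$ strongly as $K\uparrow A$, cf.\ (\ref{c1}); hence for each $j\in\mathbb N$ there is $K_j\in\mathfrak C_A$ with $\|\omega^{K'}-\omega^A\|<1/j$ whenever $K'\in\mathfrak C_A$ and $K'\supset K_j$. Setting $L_j:=K_1\cup\dots\cup K_j$, I obtain an increasing sequence in $\mathfrak C_A$ with $L_j\supset K_j$, whence $\|\omega^{L_j}-\omega^A\|<1/j$; thus $\omega^{L_j}\to\omega^A$ strongly as $j\to\infty$. Define $A_0:=\bigcup_{j\in\mathbb N}L_j$. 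Then $A_0$ is a $K_\sigma$-set with $A_0\subset A$, and the permanent assumptions (a)--(c) carry over to $A_0$: (a) and (b) concern only $\kappa$ and $\omega$, while (c) holds because every compact subset of $A_0$ is a compact subset of $A$ and $\sup_{A_0}U^\omega\leqslant\sup_A U^\omega<\infty$. In particular $\omega^{A_0}$ exists and is governed by Theorem~\ref{th1}.

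Then I would identify the two swept measures. Each $\omega^{L_j}$ lies in $\mathcal E^+(L_j)\subset\mathcal E^+(A_0)$ and has mass $\omega^{L_j}(X)\leqslant H$ by (\ref{emm'}) applied with $A:=L_j$, so $\omega^{L_j}\in\mathcal E^+_H(A_0)$; passing to the strong limit gives $\omega^A\in\mathcal E'_H(A_0)$. Moreover $U^{\omega^A}=U^\omega$ n.e.\ on $A$ by (\ref{n.e.'}), and since the exceptional set has inner capacity zero and inner capacity is monotone, the same equality holds n.e.\ on the smaller set $A_0$. By the uniqueness asserted in Theorem~\ref{th1}(i) for $A_0$, these two facts force $\omega^A=\omega^{A_0}$. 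Finally, as $A_0$ is a $K_\sigma$-set it is Borel, so Theorem~\ref{th1'} yields $\omega^{A_0}=\omega^{*A_0}$, and (\ref{OO}) follows. One should also confirm that the degenerate case $\cp_*A_0=0$ causes no trouble: then $\mathcal E^+(A_0)=\{0\}$ by (\ref{iff}), so $\mathcal E'_H(A_0)=\{0\}$, forcing $\omega^A=\omega^{A_0}=0$, in agreement with the claim.

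The main obstacle --- and the only genuinely delicate point --- is the passage from the net $(\omega^K)_{K\in\mathfrak C_A}$, indexed by the possibly uncountable directed set $\mathfrak C_A$, to a single increasing sequence whose balayages still converge strongly to $\omega^A$. The key is that one need not produce a cofinal sequence in $\mathfrak C_A$ (which may fail to exist when $A$ is not $\sigma$-compact): the metrizability of the strong topology supplies, for each $1/j$, a compact "tail threshold" $K_j$, and replacing it by the finite union $L_1\subset L_2\subset\cdots$ both restores monotonicity and preserves the estimate. Everything else is a routine bookkeeping of the hypotheses (a)--(c) on $A_0$ and an appeal to the uniqueness characterization of Theorem~\ref{th1}(i).
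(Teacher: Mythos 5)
Your proof is correct, but it follows a genuinely different route from the paper's. The paper argues at the level of potentials: by (\ref{c3}) the net $\bigl(U^{\omega^K}\bigr)_{K\in\mathfrak C_A}$ increases pointwise to $U^{\omega^A}$, and since these are l.s.c.\ functions on a second-countable space, Doob's theorem on upper envelopes \cite[Appendix~VIII, Theorem~2]{Doob} supplies an increasing sequence $(K_j)$ of compact subsets of $A$ with $U^{\omega^{K_j}}\uparrow U^{\omega^A}$ pointwise on $X$; setting $A_0:=\bigcup_j K_j$, a second appeal to (\ref{c3}) yields $U^{\omega^{A_0}}=U^{\omega^A}$ everywhere on $X$, whence $\omega^A=\omega^{A_0}$ by Lemma~\ref{l0}. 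You instead argue at the level of the strong topology: your extraction of the thresholds $K_j$ uses only the definition of net convergence in the normed space $\mathcal E$ (with $\varepsilon=1/j$; metrizability per se is not even needed), your monotonization by the finite unions $L_j$ correctly sidesteps the possible absence of a cofinal sequence in $\mathfrak C_A$, and you then identify $\omega^A=\omega^{A_0}$ through $\omega^A\in\mathcal E'_H(A_0)$, the n.e.\ equality on $A_0$ (by monotonicity of inner capacity), and the uniqueness clause of Theorem~\ref{th1}(i) --- where the paper uses equality of potentials everywhere plus Lemma~\ref{l0}. A noteworthy dividend of your route is that second-countability of $X$ plays no role in the first equality $\omega^A=\omega^{A_0}$: it enters only through Theorem~\ref{th1'} (via $\sigma$-compactness and perfect normality) to get $\omega^{A_0}=\omega^{*A_0}$, so your argument in fact establishes the $K_\sigma$-reduction on an arbitrary locally compact space under (a)--(c). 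What the paper's route buys in exchange is the sharper by-product that a single increasing sequence of compacts realizes the monotone convergence $U^{\omega^{K_j}}\uparrow U^{\omega^A}$ pointwise on all of $X$. Your bookkeeping of (a)--(c) for $A_0$, the mass bound via (\ref{emm'}), and the degenerate case $\cp_*A_0=0$ (where $\mathcal E'_H(A_0)=\{0\}$ forces all three measures in (\ref{OO}) to vanish, consistently with the paper's conventions) are all in order.
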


\begin{proof}
As seen from (\ref{c3}), the net $(U^{\omega^K})_{K\in\mathfrak C_A}$ increases pointwise on $X$ to $U^{\omega^A}$. The space $X$ being Hausdorff and sec\-ond-count\-able while the functions $U^{\omega^K}$ being l.s.c.\ on $X$, applying \cite[Appendix~VIII, Theorem~2]{Doob} shows that there exists an increasing sequence $(K_j)_{j\in\mathbb N}$ of compact subsets of $A$ having the property
\begin{equation*}U^{\omega^{K_j}}\uparrow U^{\omega^A}\quad\text{pointwise on $X$ as $j\to\infty$}.\end{equation*}
Therefore, for the $K_\sigma$-set $A_0$,
\begin{equation*}A_0:=\bigcup_{j\in\mathbb N}\,K_j,\end{equation*}
we obtain, again by making use of (\ref{c3}),
\begin{equation*}U^{\omega^{A_0}}=\lim_{j\to\infty}\,U^{\omega^{K_j}}=U^{\omega^A}\quad\text{on all of }X.\end{equation*}
Both $\omega^{A_0}$ and $\omega^A$ being of finite energy, this gives $\omega^A=\omega^{A_0}$ (Lemma~\ref{l0}).
Besides, since the second-countable, locally compact space $X$ is $\sigma$-compact and perfectly normal (footnote~\ref{FOOT}),  while $A_0$ is Borel, we also have $\omega^{A_0}=\omega^{*A_0}$ (Theorem~\ref{th1'}). This completes the proof of (\ref{OO}), whence the theorem.
\end{proof}

\section{Balayage of signed measures}\label{sec-sign} Defining the inner and the outer balayage of $\xi\in\mathfrak M$ to arbitrary $A\subset X$ by
\begin{equation}\label{s1}
  \xi^A:=(\xi^+)^A-(\xi^-)^A,\quad\xi^{*A}:=(\xi^+)^{*A}-(\xi^-)^{*A},
\end{equation}
we assume henceforth that $\xi^+$ and $\xi^-$, the positive and negative parts of $\xi$ in the Hahn--Jor\-dan decomposition \cite[Section~III.1, Theorem~2]{B2}, meet (b) and (c).

\begin{theorem}\label{ths} Then, the inner balayage $\xi^A$ does exist, and it is uniquely characterized within $\mathcal E$ by the symmetry relation\footnote{Given $\theta\in\mathcal E$, $\theta^A$ does exist, for $\theta\in\mathcal E\iff\theta^\pm\in\mathcal E^+$ (see \cite[Section~3.1]{F1}). With regard to $\lambda^A$ and $\lambda^{*A}$, where $\lambda\in\mathcal E^+$, see \cite{Z-arx1} (Theorems~4.3 and 9.4).}
\begin{equation}\label{sym-s}I(\xi^A,\theta)=I(\theta^A,\xi)\quad\text{for all $\theta\in\mathcal E$}.\end{equation}
If moreover $X$ is $\sigma$-compact and perfectly normal, while $A$ is Borel, then all this remains valid with $\xi^A$ and $\theta^A$ replaced by $\xi^{*A}$ and $\theta^{*A}$, respectively.
\end{theorem}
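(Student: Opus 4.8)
The plan is to establish Theorem~\ref{ths} by reducing the signed-measure case to the positive-measure theory already developed, exploiting the bilinearity of mutual energy and the definitions in (\ref{s1}). First I would verify existence: since $\xi^+$ and $\xi^-$ both satisfy (b) and (c) by hypothesis, Theorem~\ref{th1} guarantees that $(\xi^+)^A$ and $(\xi^-)^A$ exist as measures in $\mathcal E'(A)\subset\mathcal E^+$, and hence $\xi^A:=(\xi^+)^A-(\xi^-)^A$ is a well-defined element of $\mathcal E$ (using $\theta\in\mathcal E\iff\theta^\pm\in\mathcal E^+$, as noted in the footnote). The substantive part is the symmetry relation (\ref{sym-s}).

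To prove (\ref{sym-s}), I would take an arbitrary $\theta\in\mathcal E$ and decompose it as $\theta=\theta^+-\theta^-$ with $\theta^\pm\in\mathcal E^+$. For each pairing among $\{\xi^+,\xi^-\}$ and $\{\theta^+,\theta^-\}$, Definition~\ref{def-in} (equivalently the symmetry relation (\ref{sym}) established in Theorem~\ref{th1}) gives
\begin{equation*}
I\bigl((\xi^\pm)^A,\theta^\mp\bigr)=I\bigl((\theta^\mp)^A,\xi^\pm\bigr),
\end{equation*}
valid because each of $\xi^\pm,\theta^\pm$ lies in $\mathcal E^+$ and the balayage of a measure in $\mathcal E^+$ satisfies (\ref{sym}) by construction. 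All four mutual energies are finite (each factor is of finite energy, so Cauchy--Schwarz in $\mathcal E$ applies), so I may expand both sides of (\ref{sym-s}) by bilinearity:
\begin{align*}
I(\xi^A,\theta)&=I\bigl((\xi^+)^A-(\xi^-)^A,\;\theta^+-\theta^-\bigr),\\
I(\theta^A,\xi)&=I\bigl((\theta^+)^A-(\theta^-)^A,\;\xi^+-\xi^-\bigr),
\end{align*}
and match the four corresponding terms using the pairwise identity above. This is a purely algebraic bookkeeping step once the scalar symmetry (\ref{sym}) is in hand.

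For uniqueness within $\mathcal E$, I would argue exactly as in Lemma~\ref{uniq-in}: if $\nu\in\mathcal E$ also satisfies (\ref{sym-s}) in place of $\xi^A$, then subtracting yields $I(\xi^A-\nu,\theta)=0$ for all $\theta\in\mathcal E$; taking $\theta:=\xi^A-\nu$ gives $I(\xi^A-\nu)=0$, whence $\xi^A=\nu$ since $\kappa$ is strictly positive definite. The main (though modest) obstacle is ensuring that every mutual energy appearing is finite so that the bilinear rearrangement is legitimate and no $\infty-\infty$ ambiguity arises; this is secured by the fact that all eight measures $\xi^\pm,\theta^\pm,(\xi^\pm)^A,(\theta^\pm)^A$ belong to $\mathcal E$, making every pairing finite by the Cauchy--Schwarz inequality in the pre-Hilbert space $\mathcal E$.

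Finally, the outer statement follows by the same argument verbatim, replacing $(\cdot)^A$ by $(\cdot)^{*A}$ throughout, invoking Definition~\ref{def-outer} and the outer analogue of (\ref{sym}) recorded in Theorem~\ref{th1'}; the hypotheses ($X$ $\sigma$-compact and perfectly normal, $A$ Borel) are precisely those under which the outer balayage of each positive part exists and satisfies (\ref{sym''}), and uniqueness again mirrors Lemma~\ref{uniq-ou}.
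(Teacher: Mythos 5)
Your proof is correct and takes essentially the same route as the paper's: existence via Theorem~\ref{th1} applied to $\xi^\pm$, the symmetry relation (\ref{sym-s}) via the decompositions $\xi=\xi^+-\xi^-$, $\theta=\theta^+-\theta^-$ and bilinearity of mutual energy (all pairings finite since the eight measures lie in $\mathcal E$), and uniqueness by the strict-positive-definiteness argument of Lemma~\ref{uniq-in}. The only immaterial divergence is the outer case, where the paper simply substitutes the identities $\xi^{*A}=\xi^A$ and $\theta^{*A}=\theta^A$ (from (\ref{s1}), (\ref{BB}), and (\ref{eq-bal-oUU})) into the inner statement rather than rerunning the argument with stars as you do; both routes rest on the same Theorem~\ref{th1'}.
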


\begin{proof} In view of (\ref{s1}), $\xi^A$ does indeed exist, since so do both $(\xi^+)^A$ and $(\xi^-)^A$ (see Theorem~\ref{th1} with $\omega:=\xi^\pm$). Moreover, (\ref{sym-s}) holds true, which is clear from (\ref{sym}) with $\omega:=\xi^\pm$ and $\lambda:=\theta^\pm$ by the bilinearity of mutual energies. To verify the statement on the uniqueness, suppose that for some $\zeta\in\mathcal E$, $I(\zeta,\theta)=I(\theta^A,\xi)$ for all $\theta\in\mathcal E$. Subtracting this from (\ref{sym-s}) gives $I(\xi^A-\zeta,\theta)=0$ for all $\theta\in\mathcal E$, which implies, by substituting $\theta:=\xi^A-\zeta$, that $I(\xi^A-\zeta)=0$, whence $\xi^A=\zeta$ by the energy principle.

Assume now that $X$ is $\sigma$-compact and perfectly normal, while $A$ is Borel. Then the latter part of the theorem follows immediately from the former on account of the equalities $\xi^{*A}=\xi^A$ and $\theta^{*A}=\theta^A$ (cf.\ (\ref{s1}), (\ref{BB}), and (\ref{eq-bal-oUU})).
\end{proof}

In the rest of this paper, along with (a) and the above-mentioned assumptions on $\xi\in\mathfrak M$, the following (g) and (h) are required to hold:
\begin{itemize}
\item[(g)] {\it $X$ is second-countable.}
\item[(h)] {\it The set of all $\varphi\in C_0(X)$ representable as potentials $U^\theta$ of measures $\theta\in\mathcal E$ is dense in the space $C_0(X)$ equipped with the inductive limit topology.}
\end{itemize}
These requirements on $X$, $\kappa$, $A$, and $\xi$ will usually not be repeated henceforth.

Then, as seen from Theorem~\ref{l-alt-countt} below, the characteristic property (\ref{sym-s}) needs only to be verified for certain {\it countably many} $\theta\in\mathcal E$, depending on $X$ and $\kappa$ only.

\begin{theorem}\label{l-alt-countt}There exists a countable set $\mathcal E^\circ\subset\mathcal E$, depending on $X$ and $\kappa$ only,  such that the inner balayage $\xi^A$ is uniquely determined within $\mathcal E$ by the relation
\begin{equation}\label{rel1}I(\xi^A,\theta)=I(\theta^A,\xi)\quad\text{for all $\theta\in\mathcal E^\circ$}.\end{equation}
That is, if for some $\zeta\in\mathcal E$,
\begin{equation}\label{rel2}I(\zeta,\theta)=I(\theta^A,\xi)\quad\text{for all $\theta\in\mathcal E^\circ$,}\end{equation}
then
\[\zeta=\xi^A.\]
If $A=B$ is Borel, then the same remains valid with $\xi^A$ and $\theta^A$ replaced by the outer swept measures $\xi^{*B}$ and $\theta^{*B}$, respectively.
\end{theorem}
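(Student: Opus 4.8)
The plan is to exploit the density assumption (h) together with second-countability (g) to extract a countable dense subfamily of potentials, and then to show that finitely (hence countably) many symmetry tests suffice. First I would invoke (g) and (h): since $X$ is second-countable, the space $C_0(X)$ with its inductive limit topology is separable, so the family of potentials $U^\theta$ with $\theta\in\mathcal E$ that is dense in $C_0(X)$ contains a \emph{countable} dense subset. Fix such a countable set $\mathcal E^\circ:=\{\theta_n:n\in\mathbb N\}\subset\mathcal E$, chosen so that $\{U^{\theta_n}\}$ is dense in $C_0(X)$; crucially $\mathcal E^\circ$ depends on $X$ and $\kappa$ only, not on $A$ or $\xi$, because density in $C_0(X)$ is intrinsic to the kernel. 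This set is the one whose existence the theorem asserts.

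Next I would prove that (\ref{rel2}) forces $\zeta=\xi^A$. By Theorem~\ref{ths}, the true balayage $\xi^A\in\mathcal E$ satisfies the full symmetry relation (\ref{sym-s}), so in particular $I(\xi^A,\theta_n)=I(\theta_n^A,\xi)$ for every $n$. Subtracting (\ref{rel2}) from these identities gives
\begin{equation*}
I(\zeta-\xi^A,\theta_n)=0\quad\text{for all }n\in\mathbb N.
\end{equation*}
Writing $\eta:=\zeta-\xi^A\in\mathcal E$, this says $\int U^{\theta_n}\,d\eta=0$ for all $n$, i.e.\ $\eta$ annihilates every potential in the countable family. The goal is to upgrade this to $\eta=0$. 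The idea is that $\eta$, being a signed measure of finite energy, defines a continuous linear functional $\varphi\mapsto\int\varphi\,d\eta$ on $C_0(X)$ in the inductive limit topology; since this functional vanishes on the dense set $\{U^{\theta_n}\}$, it vanishes on all of $C_0(X)$, whence $\eta=0$ as a measure and so $\zeta=\xi^A$.

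The main obstacle, and the step requiring genuine care, is justifying that $\eta\mapsto\int\varphi\,d\eta$ extends from the countable family to a genuinely continuous functional, and in particular that annihilation on a dense subset of potentials propagates to $\eta=0$. Two points need attention. First, one must check that $\int U^{\theta_n}\,d\eta=I(\eta,\theta_n)$, i.e.\ that the integral of the potential against $\eta$ really computes the mutual energy; this is the Lebesgue--Fubini interchange and is legitimate because both $\eta$ and $\theta_n$ lie in $\mathcal E$ (one may reduce to positive parts as in the proofs of Lemma~\ref{l0} and Proposition~\ref{th-tot}). Second, the functional $\varphi\mapsto\int\varphi\,d\eta$ is continuous on $C_0(X)$ in the inductive limit topology for \emph{every} Radon measure $\eta$ (this is precisely how Radon measures are defined as the dual of $C_0(X)$ in \cite{B2}); hence vanishing on a dense set yields vanishing everywhere, so $\eta$ is the zero measure. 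I would then close by noting that the strict positive definiteness of $\kappa$ is not even needed for this last step, though it reappears if one prefers the alternative route $I(\eta)=\lim_n I(\eta,\theta_n')=0$ via strong approximation of $\eta$ itself by the $\theta_n$.

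Finally I would dispatch the Borel case in one line: when $A=B$ is Borel and $X$ is $\sigma$-compact and perfectly normal, Theorem~\ref{ths} gives $\xi^{*B}=\xi^B$ and $\theta^{*B}=\theta^B$ for all $\theta\in\mathcal E$, so the outer version of (\ref{rel1})--(\ref{rel2}) is literally identical to the inner one already proved, and the same countable set $\mathcal E^\circ$ works verbatim.
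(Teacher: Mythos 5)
Your overall route is the paper's own: build a countable set $\mathcal E^\circ$ whose potentials lie in $C_0(X)$ and are dense there in the inductive limit topology $\mathcal T$ (the paper's Theorem~\ref{th-dense}), convert the energy identities into the statement that the signed measure $\eta:=\zeta-\xi^A$ annihilates each $U^\theta$, $\theta\in\mathcal E^\circ$ (legitimate by Lebesgue--Fubini, exactly as you say), conclude $\eta=0$ because a Radon measure is a $\mathcal T$-continuous functional on $C_0(X)$ vanishing on a dense set, and dispatch the Borel case via $\xi^{*B}=\xi^B$, $\theta^{*B}=\theta^B$ (the paper cites (\ref{s1}), (\ref{BB}), (\ref{eq-bal-oUU})). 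Your observation that strict positive definiteness is not needed in the final step also matches the paper, which concludes through equality of measures rather than through the energy norm.

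The one step you dispatch too quickly is the extraction of $\mathcal E^\circ$. You argue: $(C_0(X),\mathcal T)$ is separable, hence the dense family of potentials \emph{contains} a countable dense subset. That inference is invalid as a general topological principle: separability does not pass to dense subsets (or even dense linear subspaces) of non-metrizable spaces, and $\mathcal T$ is not metrizable when $X$ is non-compact. For instance, the finitely supported vectors form a dense, non-separable linear subspace of the separable space $\mathbb R^{\mathfrak c}$, since any countable subset has all supports inside a countable coordinate set and its closure stays there. So you cannot conclude the existence of $\mathcal E^\circ$ from separability alone; you need the structure the paper actually uses. Namely, the paper first produces a countable $L\subset C_0(X)$ that is $\mathcal T$-dense via Bourbaki's dominated-approximation lemma \cite[Section~V.3.1, Lemma]{B2} (here (g) enters, together with Lemma~\ref{foot-conv} to see that $|\varphi-\psi_j|<\varepsilon\psi_0$ forces $\mathcal T$-convergence), and then, reading (h) in its sequential form (which is what the regularization arguments in the paper's examples deliver: common compact support plus uniform convergence), picks for \emph{each} $\psi\in L$ a sequence $(\theta_p^\psi)\subset\mathcal E$ with $U^{\theta_p^\psi}\in C_0(X)$ and $U^{\theta_p^\psi}\to\psi$ in $\mathcal T$; the countable family $\mathcal E^\circ:=\{\theta_p^\psi\}$ is then $\mathcal T$-dense by a two-step approximation using the vector-space structure of $\mathcal T$, and the determination property (\ref{D}) follows from \cite[Section~III.1.7]{B2}. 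With your first paragraph replaced by this construction, the rest of your argument goes through verbatim and coincides with the paper's proof.
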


\begin{remark}
  We shall now provide three examples, where (a) and (h) do hold, whereas (g) is obvious.

    In the first, $\kappa$ is the $\alpha$-Riesz kernel $|x-y|^{\alpha-n}$ of order $\alpha\leqslant2$, $\alpha<n$, on $X:=\mathbb R^n$, $n\geqslant2$. Note that for every $\varphi\in C_0(\mathbb R^n)$, there exist a compact set $K\subset\mathbb R^n$ and a sequence $(\varphi_j)\subset C_0^\infty(\mathbb R^n)$ (obtained by regularization \cite[p.~22]{S}) such that all the $\varphi$ and $\varphi_j$ equal $0$ on $K^c$, and moreover $\varphi_j\to\varphi$ uniformly on $K$ (hence, also in the inductive limit topology on $C_0(\mathbb R^n)$, cf.\ Lemma~\ref{foot-conv} below). Since each $\varphi\in C_0^\infty(\mathbb R^n)$ can be represented as the $\alpha$-Riesz potential of a (signed) measure on $\mathbb R^n$ of finite $\alpha$-Riesz energy (see \cite[Lemma~1.1]{L} and \cite[Lemma~3.2]{Z-bal2}), (h) indeed holds.

    In the next two examples, $X:=D$, where $D\subset\mathbb R^n$, $n\geqslant2$, is open. Then (h) holds if either $\kappa$ is the ($2$-)Green kernel for the Laplace operator on Greenian $D$, or $\kappa$ is the $\alpha$-Green kernel of order $\alpha\in(1,2)$ for the fractional Laplacian on bounded $D$ of class $C^{1,1}$.\footnote{$D$ is said to be of class $C^{1,1}$ if for every $y\in\partial_{\mathbb R^n}D$, there exist $B(x,r)\subset D$ and $B(x',r)\subset D^c$, where $r>0$, that are tangent at $y$, see \cite[p.~458]{Bogdan}. Here $B(x,r):=\{z\in\mathbb R^n:\ |z-x|<r\}$.}
 This follows by applying \cite[p.~75, Remark]{L}, resp.\ \cite[Eq.~(19)]{Bogdan}, to $\varphi\in C_0^\infty(D)$, and then utilizing the same approximation technique as just above.

    Regarding the validity of (a) in each of these three examples, see Remark~\ref{(a)}.
\end{remark}

\section{Proof of Theorem~\ref{l-alt-countt}}

\subsection{Preliminaries}\label{sec-prel} According to Bourbaki \cite[Section~III.1.1]{B2}, {\it the inductive limit topology} on $C_0(X)$ is the inductive limit $\mathcal T$
of the locally convex topologies of the spaces $C_0(K;X)$, where $K$ ranges over all compact subsets of $X$, while $C_0(K;X)$ is the space of all $\varphi\in C_0(X)$ with ${\rm Supp}(\varphi)\subset K$, equipped with the topology $\mathcal T_K$ of uniform convergence on $K$. Thus, by \cite[Section~II.4, Proposition~5]{B4}, $\mathcal T$ is the finest of the locally convex topologies on $C_0(X)$ for which all the canonical injections $C_0(K;X)\to C_0(X)$, $K\subset X$ being compact, are continuous.

\begin{lemma}[{\rm see \cite[Section~III.1, Proposition~1(i)]{B2}}]\label{aux1}For any compact $K\subset X$, the topology on the space $C_0(K;X)$ induced by $\mathcal T$ is identical with the topology $\mathcal T_K$.\end{lemma}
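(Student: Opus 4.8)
The plan is to establish the asserted equality of topologies by proving the two inclusions separately, the nontrivial one resting on a neighbourhood-extension argument carried out along a compact exhaustion of $X$. To lighten notation, write $E_K:=C_0(K;X)$ for compact $K\subset X$, carrying the norm topology $\mathcal T_K$ of uniform convergence on $K$. The easy inclusion is that the topology induced by $\mathcal T$ on $E_K$ is \emph{coarser} than $\mathcal T_K$: this merely restates the continuity of the canonical injection $(E_K,\mathcal T_K)\to(C_0(X),\mathcal T)$, which holds by the very definition of $\mathcal T$ as the finest locally convex topology rendering all such injections continuous \cite[Section~II.4, Proposition~5]{B4}. Everything therefore reduces to the reverse inclusion and, by translation invariance, to the claim that each absolutely convex $\mathcal T_K$-neighbourhood $V$ of $0$ in $E_K$ can be written as $V=W\cap E_K$ for some $\mathcal T$-neighbourhood $W$ of $0$ in $C_0(X)$.

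First I would record the \emph{strictness} of the system. For compact $K\subset K'$ a function supported in $K$ has equal suprema over $K$ and over $K'$, so $\mathcal T_{K'}$ induces $\mathcal T_K$ on $E_K$; and since a $\mathcal T_{K'}$-limit of functions vanishing off $K$ again vanishes off the closed set $K$, the subspace $E_K$ is \emph{closed} in $(E_{K'},\mathcal T_{K'})$. The single external ingredient I borrow is the neighbourhood-extension lemma for closed subspaces: if $H$ is a closed subspace of a locally convex space $G$ carrying the induced topology, then every absolutely convex neighbourhood $V_H$ of $0$ in $H$ equals $V_G\cap H$ for some absolutely convex neighbourhood $V_G$ of $0$ in $G$. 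In the present real-scalar setting this is proved directly: choose an absolutely convex $0$-neighbourhood $U$ of $G$ with $U\cap H\subseteq V_H$, and set $V_G:=\mathrm{conv}(U\cup V_H)$. A point $z\in V_G\cap H$ can be written $z=tu+(1-t)v$ with $u\in U$, $v\in V_H\subseteq H$, $t\in[0,1]$; if $t>0$ then $tu=z-(1-t)v\in H$ forces $u=t^{-1}(tu)\in H$, hence $u\in U\cap H\subseteq V_H$, so that $z$ is a convex combination of the two elements $u,v$ of $V_H$ and thus lies in $V_H$ (the case $t=0$ being trivial). This gives $V_G\cap H\subseteq V_H$, while $\supseteq$ is clear, so equality holds, and $V_G$ is absolutely convex and a neighbourhood since $V_G\supseteq U$.

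The main construction then runs by induction along an exhaustion. Since $X$ is (in the present context) $\sigma$-compact, fix compact sets $K=K_0\subset K_1\subset\cdots$ with $K_n\subset\mathrm{int}\,K_{n+1}$ and $\bigcup_n K_n=X$. Starting from $V_0:=V$ and applying the extension lemma to the closed inclusion $E_{K_{n-1}}\subset E_{K_n}$, I build absolutely convex $\mathcal T_{K_n}$-neighbourhoods $V_n$ of $0$ with $V_n\cap E_{K_{n-1}}=V_{n-1}$; in particular $V_{n-1}\subset V_n$, so $W:=\bigcup_n V_n$ is an absolutely convex absorbing subset of $C_0(X)$. Iterating the coherence relations yields $V_n\cap E_K=V_0=V$ for every $n$, whence $W\cap E_K=V$. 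Finally $W$ is a $\mathcal T$-neighbourhood of $0$: given compact $K'\subset X$, the exhaustion and compactness give $K'\subset K_n$ for some $n$, and then $W\cap E_{K'}\supseteq V_n\cap E_{K'}$, which is a $\mathcal T_{K'}$-neighbourhood of $0$ by strictness; as this holds for all $K'$, the set $W$ is a basic $\mathcal T$-neighbourhood of $0$. Thus $V=W\cap E_K$ belongs to the induced topology, which establishes the reverse inclusion and hence the lemma.

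The step I expect to be the main obstacle is maintaining the \emph{exact} equality $V_n\cap E_{K_{n-1}}=V_{n-1}$ throughout the induction, rather than a mere inclusion; this is what prevents the union $W$ from leaking extra functions back into the bottom layer $E_K$ and so is precisely what guarantees $W\cap E_K=V$. It is exactly here that closedness of the steps enters, through the convex-hull construction in the extension lemma. For a general (non-$\sigma$-compact) locally compact $X$ the sequence $(K_n)$ is replaced by the directed family of all compact subsets and the same extension mechanism is iterated over that family, as in \cite[Section~III.1.1]{B2}; the $\sigma$-compact case treated above already suffices for the applications in this paper.
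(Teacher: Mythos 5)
Your proof is correct, but note that the paper contains no argument for this lemma at all: it is imported wholesale from Bourbaki \cite[Section~III.1, Proposition~1(i)]{B2}, so any comparison is with Bourbaki rather than with the paper. What you have written out is the classical strict-inductive-limit argument (in the style of Dieudonn\'e--Schwartz, cf.\ \cite[Section~II.4.6, Proposition~9]{B4}): the easy inclusion from the universal property, then the neighbourhood-extension lemma via $V_G:=\mathrm{conv}(U\cup V_H)$, iterated along a countable compact exhaustion to produce $W=\bigcup_n V_n$ with $W\cap E_K=V$, and finally the standard criterion that an absolutely convex absorbing set whose trace on each $E_{K'}$ is a $\mathcal T_{K'}$-neighbourhood is a $\mathcal T$-neighbourhood. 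All of these steps check out, including the coherence $V_n\cap E_{K_{n-1}}=V_{n-1}$ and the resulting leak-proofness of $W$. Your argument needs $\sigma$-compactness, which Bourbaki's citation does not; but since the lemma is stated and used only in the part of the paper where assumption (g) (second countability, hence $\sigma$-compactness) is in force --- indeed the paper itself immediately exhibits $C_0(X)$ as the strict inductive limit of the sequence $C_0(\overline{U}_j;X)$ --- the restriction costs nothing here. Your closing aside, that for general locally compact $X$ one "iterates the same mechanism over the directed family of all compacts," is the one optimistic spot: an uncountable, merely directed family admits no such step-by-step recursion (there need be no cofinal sequence), and Bourbaki's proof of the general case proceeds by a different device; but you flag this explicitly and do not rely on it.

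One small correction of attribution rather than of substance: closedness of $E_{K_{n-1}}$ in $E_{K_n}$ is never actually used. The extension lemma you prove is valid for an \emph{arbitrary} subspace $H$ carrying the induced topology --- your own verification (from $z=tu+(1-t)v$ and $v\in H$ one gets $tu=z-(1-t)v\in H$, hence $u\in H$ when $t>0$) nowhere invokes closedness of $H$, only that $H$ is a linear subspace. So your concluding claim that "it is exactly here that closedness of the steps enters" is mistaken: closedness of the steps matters elsewhere in strict-inductive-limit theory (completeness of $C_0(X)$ in $\mathcal T$, and the boundedness criterion \cite[Section~III.1, Proposition~2(ii)]{B2} that the paper invokes in the proof of its Lemma~6.2), not for maintaining the exact equality $V_n\cap E_{K_{n-1}}=V_{n-1}$. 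Since the closedness you assume is true anyway, this does not affect the validity of the proof.
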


Due to the assumption (g), $X$ is $\sigma$-compact \cite[Section~IX.2, Corollary to Proposition~16]{B3}, and hence there is a sequence of relatively compact open subsets $U_j$ with the union $X$ and such that $\overline{U}_j\subset U_{j+1}$, see \cite[Section~I.9, Proposition~15]{B1}. The space $C_0(X)$ is then the {\it strict} inductive limit of the sequence of spaces $C_0(\overline{U}_j;X)$, cf.\ \cite[Section~II.4.6]{B4}, for the topology induced on $C_0(\overline{U}_j;X)$ by $\mathcal T_{\overline{U}_{j+1}}$ is just $\mathcal T_{\overline{U}_j}$. Hence, by \cite[Section~II.4, Proposition~9]{B4}, $C_0(X)$ is Hausdorff and complete (in $\mathcal T$).

\begin{lemma}\label{foot-conv}For any sequence $(\varphi_k)\subset C_0(X)$, {\rm(i$_1$)} and {\rm(ii$_1$)} are equivalent.
\begin{itemize}\item[{\rm(i$_1$)}] $(\varphi_k)$ converges to $0$ in the strict inductive limit topology $\mathcal T$.
\item[{\rm(ii$_1$)}]There exists a compact subset $K$ of $X$ such that ${\rm Supp}(\varphi_k)\subset K$ for all $k$, and $(\varphi_k)$ converges to $0$ uniformly on $K$.\end{itemize}
\end{lemma}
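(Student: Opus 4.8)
The plan is to prove the two implications separately, with (ii$_1$)$\Rightarrow$(i$_1$) being routine and (i$_1$)$\Rightarrow$(ii$_1$) carrying the whole weight of the argument. Throughout I use the exhaustion $(U_j)$ of $X$ by relatively compact open sets with $\overline{U}_j\subset U_{j+1}$ and $\bigcup_j U_j=X$ (available by (g)), together with the fact recalled above that $C_0(X)$ is the strict inductive limit of the spaces $C_0(\overline{U}_j;X)$ equipped with the topologies $\mathcal T_{\overline{U}_j}$ of uniform convergence on $\overline{U}_j$. For (ii$_1$)$\Rightarrow$(i$_1$): given a compact $K$ containing every ${\rm Supp}(\varphi_k)$, I would first note that $K\subset U_M$ for some $M$ (the increasing open sets $U_j$ cover the compact $K$), hence $K\subset\overline{U}_M$. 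As each $\varphi_k$ vanishes off $K$, uniform convergence on $K$ coincides with uniform convergence on $\overline{U}_M$, i.e.\ $\varphi_k\to0$ in $\mathcal T_{\overline{U}_M}$; since the canonical injection $C_0(\overline{U}_M;X)\to C_0(X)$ is $\mathcal T$-continuous by the very definition of the inductive limit topology, $\varphi_k\to0$ in $\mathcal T$.

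The substance lies in (i$_1$)$\Rightarrow$(ii$_1$), which I split into two steps. \emph{Step 1: the supports are trapped in one $\overline{U}_m$.} Arguing by contradiction, suppose no single $\overline{U}_m$ contains all ${\rm Supp}(\varphi_k)$. Then $S:=\bigcup_k{\rm Supp}(\varphi_k)$ is not relatively compact (a relatively compact $S$ would have $\overline{S}\subset U_m$ for some $m$), so $S$ meets $X\setminus\overline{U}_m$ for every $m$. Picking $y\in S\setminus\overline{U}_m$ and perturbing within the open set $X\setminus\overline{U}_m$ (every neighbourhood of a support point meets $\{\varphi_k\ne0\}$), I would extract a strictly increasing index sequence $k_1<k_2<\cdots$ and points $x_j\notin\overline{U}_{m_j}$ with $m_j\uparrow\infty$ and $\varphi_{k_j}(x_j)\ne0$. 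The indices may be taken strictly increasing because each fixed $\varphi_k$ has compact support, hence vanishes off some $\overline{U}_{M(k)}$, so the index producing an escape past $\overline{U}_m$ is forced to assume infinitely many values. Consequently the $x_j$ escape every compact set, and each $\overline{U}_m$ contains only finitely many of them.

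Setting $c_j:=1/|\varphi_{k_j}(x_j)|>0$, I would introduce the seminorm $p(\varphi):=\sup_j c_j|\varphi(x_j)|$ on $C_0(X)$. It is finite-valued, since any $\varphi\in C_0(X)$ has support in some $\overline{U}_m$ and thus only finitely many $x_j$ contribute; moreover its restriction to each $C_0(\overline{U}_m;X)$ is dominated by a finite multiple of $\sup_{\overline{U}_m}|\varphi|$, hence is $\mathcal T_{\overline{U}_m}$-continuous. A seminorm whose restriction to every step of a strict inductive limit is continuous is itself $\mathcal T$-continuous, its unit ball meeting each step in a neighbourhood of $0$ and therefore being a $\mathcal T$-neighbourhood of $0$. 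But $p(\varphi_{k_j})\geqslant c_j|\varphi_{k_j}(x_j)|=1$ for all $j$, contradicting $\varphi_{k_j}\to0$ in $\mathcal T$. Hence some $\overline{U}_m=:K$ contains all the supports. \emph{Step 2:} with every $\varphi_k$ lying in $C_0(\overline{U}_m;X)$, Lemma~\ref{aux1} identifies the topology induced on this space by $\mathcal T$ with $\mathcal T_{\overline{U}_m}$; as a $\mathcal T$-convergent sequence contained in a subspace converges in the induced topology, we obtain $\varphi_k\to0$ uniformly on $\overline{U}_m$, a fortiori on $K$, which is exactly (ii$_1$).

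The main obstacle is Step 1: extracting the escaping subsequence with strictly increasing indices and manufacturing the continuous seminorm $p$ that separates it from $0$. This is the classical regularity argument for strict $(LF)$-spaces (Dieudonn\'e--Schwartz), and the only delicate point is to confirm that $p$ is genuinely $\mathcal T$-continuous, for which I rely on the characterization of the neighbourhoods of $0$ in a strict inductive limit.
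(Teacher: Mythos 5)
Your proof is correct, and for the substantive implication it takes a genuinely different route from the paper. For (i$_1$)$\Rightarrow$(ii$_1$), the paper disposes of the support-trapping step in one line: a convergent sequence is bounded in $\mathcal T$, and by Bourbaki's Proposition~2(ii) in \cite[Section~III.1]{B2} (applicable because a locally compact, $\sigma$-compact space is paracompact) any $\mathcal T$-bounded subset of $C_0(X)$ has its supports inside a single compact set; it then concludes via Lemma~\ref{aux1}, exactly as you do in your Step~2 and in the easy direction. You instead re-derive the support-trapping property from scratch by the classical Dieudonn\'e--Schwartz regularity argument for strict inductive limits: extract escaping points $x_j\notin\overline{U}_{m_j}$ with $\varphi_{k_j}(x_j)\ne0$ and strictly increasing $k_j$ (your justification for the strict increase -- each fixed $\varphi_k$ vanishes off some $\overline{U}_{M(k)}$ -- is sound), build the seminorm $p(\varphi)=\sup_j c_j|\varphi(x_j)|$, observe that each compact set contains only finitely many $x_j$ so that $p$ is finite and continuous on every step, and invoke the standard fact that a seminorm continuous on each step of a locally convex inductive limit is $\mathcal T$-continuous (its sublevel set $\{p<1\}$ being convex, balanced, absorbing, and meeting each step in a $0$-neighbourhood); the contradiction $p(\varphi_{k_j})\geqslant1$ then finishes Step~1. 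All of this checks out. What each approach buys: the paper's citation yields a two-line proof and covers arbitrary bounded sets (not just sequences) on any paracompact locally compact space, while your argument is self-contained, makes transparent exactly where the countable exhaustion from (g) enters, and is in essence a direct proof of the special case of the Bourbaki proposition being cited -- the same mechanism, made explicit rather than outsourced.
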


\begin{proof} Assume $(\varphi_k)\subset C_0(X)$ approaches $0$ in $\mathcal T$.
Since $\{\varphi_k: k\in\mathbb N\}$ is then bounded in $\mathcal T$, there exists a compact set $K\subset X$ such that ${\rm Supp}(\varphi_k)\subset K$ for all $k$ (see \cite[Section~III.1, Proposition~2(ii)]{B2}).\footnote{The cited proposition from \cite{B2} is applicable here, for a locally compact, $\sigma$-compact space is paracompact \cite[Section~I.9, Theorem~5]{B1}.} Applying now Lemma~\ref{aux1} we therefore conclude that $(\varphi_k)$ also approaches $0$ in the (Hausdorff) topology $\mathcal T_K$, and so (i$_1$)$\Longrightarrow$(ii$_1$).

Since $\mathcal T$ is Hausdorff as well, the opposite follows directly from Lemma~\ref{aux1}.\end{proof}

In Lemma~\ref{foot-conv}, only the assumption (g) was, in fact, used; whereas Theorem~\ref{th-dense}, crucial to the proof of Theorem~\ref{l-alt-countt}, is based substantially on both (g) and (h).

\begin{theorem}\label{th-dense}
 There is a countable set $C_0^\circ\subset C_0(X)$, depending on $X$ and $\kappa$ only, which is dense in $C_0(X)$ in the  topology $\mathcal T$; therefore, for any two $\mu,\nu\in\mathfrak M$,
 \begin{equation}\label{D}
 \mu=\nu\iff\mu(\varphi)=\nu(\varphi)\quad\text{for every $\varphi\in C_0^\circ$}.
 \end{equation}
 \end{theorem}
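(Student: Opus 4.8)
The plan is to exploit the fact that $C_0(X)$, under the strict inductive limit topology $\mathcal T$, is a \emph{separable} space, so that countability of a dense set can be extracted, and then to combine this with assumption (h) to replace an arbitrary dense set by one consisting of potentials of measures in $\mathcal E$. First I would show that $C_0(X)$ is separable in $\mathcal T$. Using the exhausting sequence of relatively compact open sets $U_j$ with $\overline{U}_j\subset U_{j+1}$ and $\bigcup_j U_j=X$ (available since $X$ is $\sigma$-compact by (g)), each space $C_0(\overline{U}_j;X)$ with the uniform-convergence topology $\mathcal T_{\overline{U}_j}$ is a separable Banach space: the compact metric space $\overline{U}_j$ (metrizable by second-countability) has separable $C(\overline{U}_j)$ by the Stone–Weierstrass theorem, and $C_0(\overline{U}_j;X)$ is a closed subspace. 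Picking a countable $\mathcal T_{\overline{U}_j}$-dense subset $D_j\subset C_0(\overline{U}_j;X)$ for each $j$ and setting $D:=\bigcup_j D_j$, I would verify that $D$ is $\mathcal T$-dense in $C_0(X)$: any $\varphi\in C_0(X)$ has $\mathrm{Supp}(\varphi)$ compact, hence contained in some $\overline{U}_j$, and by Lemma~\ref{aux1} the topology $\mathcal T$ restricted to $C_0(\overline{U}_j;X)$ coincides with $\mathcal T_{\overline{U}_j}$, so $\varphi$ is approximated in $\mathcal T$ by elements of $D_j$.

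Next I would upgrade this countable $\mathcal T$-dense set $D$ to one consisting of \emph{potentials}. By hypothesis (h), the set $P:=\{U^\theta\in C_0(X):\theta\in\mathcal E\}$ is $\mathcal T$-dense in $C_0(X)$. The goal is to produce a \emph{countable} subset $C_0^\circ\subset P$ that is still $\mathcal T$-dense, depending only on $X$ and $\kappa$. The natural route is to use metrizability-type or first-countability properties of $\mathcal T$ on the relevant pieces: for each $d\in D$ and each $m\in\mathbb N$, density of $P$ lets me choose a potential within a suitable $\mathcal T$-neighborhood of $d$; concretely, working inside a fixed $C_0(\overline{U}_j;X)$ (where $\mathcal T$ agrees with the \emph{metrizable} uniform topology $\mathcal T_{\overline{U}_j}$) I would, for each $d\in D_j$ and each $m$, select $\varphi_{d,m}\in P$ with $\|\varphi_{d,m}-d\|_{\overline{U}_j}<1/m$ and $\mathrm{Supp}(\varphi_{d,m})$ controlled. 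Collecting all these over the countable index set yields a countable $C_0^\circ\subset P$. A short argument via Lemma~\ref{foot-conv} then shows $C_0^\circ$ is $\mathcal T$-dense: given $\varphi$ and a target accuracy, first approximate $\varphi$ by some $d\in D$ within $C_0(\overline{U}_j;X)$, then approximate $d$ by some $\varphi_{d,m}\in C_0^\circ$, combining the two estimates uniformly on the common compact support.

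Finally I would derive the separation statement (\ref{D}). The implication $\mu=\nu\Rightarrow\mu(\varphi)=\nu(\varphi)$ for all $\varphi\in C_0^\circ$ is trivial. For the converse, suppose $\mu(\varphi)=\nu(\varphi)$ for every $\varphi\in C_0^\circ$. Each $\mu,\nu\in\mathfrak M$ is by definition a continuous linear functional on $(C_0(X),\mathcal T)$, hence the map $\varphi\mapsto\mu(\varphi)-\nu(\varphi)$ is $\mathcal T$-continuous and vanishes on the $\mathcal T$-dense set $C_0^\circ$; by continuity it vanishes on all of $C_0(X)$, giving $\mu=\nu$ since the vague topology is Hausdorff and measures are determined by their action on $C_0(X)$. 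The main obstacle I anticipate is the second paragraph: namely securing a \emph{countable} dense set of potentials while keeping the dependence only on $X$ and $\kappa$ (not on any particular measure), which forces one to interleave the separability of the uniform topology on each $C_0(\overline{U}_j;X)$ with the density hypothesis (h), rather than invoking separability of $P$ directly (as $P$ need not be closed, and the inductive limit $\mathcal T$ is not globally metrizable). Care with supports, via Lemma~\ref{foot-conv}, is what makes the diagonal selection legitimate.
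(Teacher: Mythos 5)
Your overall architecture coincides with the paper's: first a countable $\mathcal T$-dense subset of $C_0(X)$ from (g) (the paper gets it by citing Bourbaki's lemma in \cite[Section~V.3.1]{B2} together with Lemma~\ref{foot-conv}; your Stone--Weierstrass construction inside the separable Banach spaces $C_0(\overline{U}_j;X)$ is a perfectly good equivalent), then an upgrade to potentials via (h), then the separation claim (\ref{D}) from the fact that Radon measures are exactly the $\mathcal T$-continuous linear functionals (the paper cites \cite[Section~III.1.7]{B2}). Your first and third steps are sound.

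The gap is in your second step, and it is exactly at the point you flag as the "main obstacle." Your selection rule — pick $\varphi_{d,m}\in P$ with $\|\varphi_{d,m}-d\|_{\overline{U}_j}<1/m$ — does not yield $\mathcal T$-density, because the approximants supplied by (h) need \emph{not} lie in $C_0(\overline{U}_j;X)$: Lemma~\ref{aux1} is unavailable for them, so sup-norm control on $\overline{U}_j$ says nothing about $\mathcal T$-closeness. Concretely, for noncompact $X$ the sets $\{\psi:\ |\psi-\varphi|\leqslant h\}$ with $h$ strictly positive, continuous, and decaying at infinity are $\mathcal T$-neighborhoods of $\varphi$; a potential matching $d$ to within $1/m$ on $\overline{U}_j$ but carrying a bump of height comparable to $1/m$ far outside $\overline{U}_j$ misses every such neighborhood with $h$ small out there. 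Moreover, $\mathcal T$ is not first-countable at any point (no countable family of strictly positive continuous functions is cofinal under pointwise domination), so \emph{no} countable list of accuracy requirements attached to $d$ can exhaust the neighborhood filter of $d$ — unless your countably many approximants eventually have supports in one fixed compact set, in which case Lemma~\ref{foot-conv} turns them into a $\mathcal T$-convergent sequence and countably many do suffice. That is precisely how the paper proceeds: from (h) it extracts, for each $\psi$ in its countable dense set $L$, a \emph{sequence} $(\theta^\psi_p)\subset\mathcal E$ with $U^{\theta^\psi_p}\to\psi$ in $\mathcal T$ (hence, by Lemma~\ref{foot-conv}, with common compact support and uniform convergence), and takes $\mathcal E^\circ:=\{\theta^\psi_p\}$. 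Your phrase "${\rm Supp}(\varphi_{d,m})$ controlled" presupposes this support confinement, but mere density of $P$ in $\mathcal T$ does not let you prescribe where the approximant is supported; to repair the proof, replace your $1/m$-selection by the choice, for each $d\in D$, of a sequence of potentials converging to $d$ in $\mathcal T$ itself, which is the form in which (h) is actually used (and verified, via regularization, in the paper's examples).
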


\begin{proof} We first note that, due to (g), there is a countable set $L\subset C_0(X)$, depending on $X$ only and having the following
property: for any given $\varphi\in C_0(X)$, there exist a sequence $(\psi_j)\subset L$ and a positive function $\psi_0\in L$ such that, for every number
$\varepsilon>0$,
\[|\varphi-\psi_j|<\varepsilon\psi_0\quad\text{for all $j\geqslant j_0$}\]
(see \cite[Section~V.3.1, Lemma]{B2}). This implies that for those $\varphi\in C_0(X)$ and $(\psi_j)\subset L$, there is a compact set $K\subset X$ such that all the $\varphi$ and $\psi_j$ equal $0$ on $K^c$, while $\psi_j\to\varphi$ uniformly on $K$; therefore, $\psi_j\to\varphi$ also in $\mathcal T$ (Lemma~\ref{foot-conv}).
Thus, the countable set $L\subset C_0(X)$ is dense in $C_0(X)$, equipped with the strict inductive topology $\mathcal T$.

But, by (h), the set $C_0(X)\cap\{U^\theta:\ \theta\in\mathcal E\}$ is also dense in $C_0(X)$ (in $\mathcal T$). Hence, for every $\psi\in L$, $L$ being introduced just above, there exists a sequence $(\theta^\psi_p)_{p\in\mathbb N}\subset\mathcal E$ such that $\bigl(U^{\theta^\psi_p}\bigr)_{p\in\mathbb N}\subset C_0(X)$ while
\[U^{\theta^\psi_p}\to\psi\quad\text{in $\mathcal T$ as $p\to\infty$.}\]
All this indicates that
\begin{equation}\mathcal E^\circ:=\bigl\{\theta^\psi_p: \ \psi\in L, \ p\in\mathbb N\bigr\}\label{e}\end{equation}
is a countable subset of $\mathcal E$, depending on $X$ and $\kappa$ only, and moreover
\begin{equation}\label{C0}C_0^\circ:=\{U^\theta:\ \theta\in\mathcal E^\circ\}\end{equation}
is a dense subset of the space $C_0(X)$, equipped with the topology $\mathcal T$.

The remaining assertion (\ref{D}) follows by applying \cite[Section~III.1.7]{B2}.
\end{proof}

\subsection{Proof of Theorem~\ref{l-alt-countt}}\label{pr-last} Let $X$, $\kappa$, $\xi$, and $A$ be as indicated in Section~\ref{sec-sign}, and let $\mathcal E^\circ\subset\mathcal E$ be given by (\ref{e}). Then $\mathcal E^\circ$ depends on $X$ and $\kappa$ only (see the proof of Theorem~\ref{th-dense}), and moreover (\ref{rel1}) holds in view of (\ref{sym-s}) with $\theta\in\mathcal E^\circ$. To show that the inner balayage $\xi^A$ is uniquely characterized within $\mathcal E$ by means of (\ref{rel1}), assume that for some $\zeta\in\mathcal E$, (\ref{rel2}) takes place. Subtracting (\ref{rel2}) from (\ref{rel1}) gives \[I(\zeta,\theta)=I(\xi^A,\theta)\quad\text{for all $\theta\in\mathcal E^\circ$},\]
or equivalently
\[\zeta(\varphi)=\xi^A(\varphi)\quad\text{for all $\varphi\in C_0^\circ$},\]
$C_0^\circ$ being introduced by means of (\ref{C0}), and consequently $\zeta=\xi^A$ (Theorem~\ref{th-dense}).

To verify the remaining claim, note that, due to (g), $X$ is $\sigma$-compact and perfectly normal (footnote~\ref{FOOT}). Hence, if $A$ is Borel, then, by virtue of (\ref{s1}),  (\ref{BB}), and (\ref{eq-bal-oUU}),
\[\xi^{*A}=\xi^A\quad\text{and}\quad\theta^{*A}=\theta^A,\]  which substituted into the former part of the theorem finalizes the whole proof.

\section{Acknowledgements} This research was supported in part by a grant from the Simons Foundation (1030291, N.V.Z.). I am also deeply grateful to Krzysztof Bogdan for helpful discussions around \cite{Bogdan}, and to the authors of \cite{Dr0} for drawing my attention to \cite{KM}.

\end{document}